\newtheorem{thm}{Theorem}[section]
\newtheorem{prop}[thm]{Proposition}
\newtheorem{lemma}[thm]{Lemma}
\newtheorem{sublemma}[thm]{Sublemma}
\newtheorem{cor}[thm]{Corollary}
\newtheorem{rem}[thm]{Remark}
\newtheorem{notn}[thm]{Notation}
\theoremstyle{definition}
\newtheorem{defn}[thm]{Definition}
\newtheorem{eg}[thm]{Example}
\newcommand{\lm}{\ensuremath{\longrightarrow}}
\newcommand{\Rhom}{\mathbf{R}\strut\kern-.2em\operatorname{Hom}\nolimits}
\newcommand{\RG}{\mathbf{R}\strut\kern-.2em\operatorname{\Gamma}\nolimits}
\newcommand{\D}{\mathsf{D}}
\DeclareMathOperator{\moduleCategory}{\mathsf{mod}} \renewcommand{\mod}{\moduleCategory}
\newcommand{\Amod}{\mod A}
\newcommand{\pd}{\operatorname{pd}\nolimits}
\newcommand{\h}{h}
\renewcommand{\L}{\mathbf{L}}
\newcommand{\xlm}{\ensuremath{\xrightarrow}}
\renewcommand{\O}{\mathcal O}
\newcommand{\x}{\vec{x}}
\renewcommand{\c}{\vec{c}}
\newcommand{\eps}{\varepsilon}
\newcommand{\Gm}{\ensuremath{\mathbb{G}_{\text{m}}}}
\newcommand{\Hom}{\operatorname{Hom}\nolimits}
\DeclareMathOperator{\shom}{\ensuremath{\mathcal{H}\mathit{om}}}
\newcommand{\Ext}{\operatorname{Ext}\nolimits}
\newcommand{\End}{\operatorname{End}\nolimits}
\newcommand{\Aut}{\operatorname{Aut}\nolimits}
\newcommand{\Tor}{\operatorname{Tor}\nolimits}
\DeclareMathOperator{\id}{\rm id}
\DeclareMathOperator{\im}{\mbox{im}\,}
\DeclareMathOperator{\spec}{\mbox{Spec}\,}
\DeclareMathOperator{\proj}{\mbox{Proj}\,}
\DeclareMathOperator{\mo}{\mbox{mod}}
\DeclareMathOperator{\Gr}{\mathsf{Gr}\,}
\DeclareMathOperator{\rk}{\mbox{rank}\,}
\DeclareMathOperator{\coker}{\mbox{coker}\,}
\DeclareMathOperator{\coh}{\mathsf{coh}}
\DeclareMathOperator{\qcoh}{\mathsf{qcoh}}
\DeclareMathOperator{\mmm}{\mathfrak{m}}
\DeclareMathOperator{\s}{\sigma}
\DeclareMathOperator{\G}{G}
\DeclareMathOperator{\w}{\omega}
\DeclareMathOperator{\PP}{\mathbb{P}}
\DeclareMathOperator{\Z}{\mathbb{Z}}
\DeclareMathOperator{\N}{\mathbb{N}}
\DeclareMathOperator{\W}{\mathbb{W}}
\DeclareMathOperator{\X}{\mathbb{X}}
\DeclareMathOperator{\Y}{\mathbb{Y}}
\DeclareMathOperator{\calr}{\mathcal{R}}
\DeclareMathOperator{\calu}{\mathcal{U}}
\DeclareMathOperator{\calm}{\mathcal{M}}
\DeclareMathOperator{\calo}{\mathcal{O}}
\DeclareMathOperator{\calt}{\mathcal{T}}
\DeclareMathOperator{\calot}{\tilde{\mathcal{O}}}
\DeclareMathOperator{\caln}{\mathcal{N}}
\DeclareMathOperator{\call}{\mathcal{L}}
\DeclareMathOperator{\calp}{\mathcal{P}}
\DeclareMathOperator{\cala}{\mathcal{A}}
\DeclareMathOperator{\calc}{\mathcal{C}}
\DeclareMathOperator{\oy}{\mathcal{O}_{Y}}
\renewcommand{\L}{\mathbf{L}}
\begin{document}

\title{Moduli stacks of Serre stable representations in tilting theory}
\author[Chan]{Daniel Chan}
\address{D. Chan: School of Mathematics and Statistics, UNSW, Sydney, 2052, Australia.}
\email{danielc@unsw.edu.au}
\urladdr{http://web.maths.unsw.edu.au/~danielch/}

\author[Lerner]{Boris Lerner}
\address{B. Lerner: School of Mathematics and Statistics, UNSW, Sydney, 2052, Australia.}
\email{boris@unsw.edu.au}
\urladdr{http://www.unsw.edu.au/~borislerner}

\thanks{This project was supported by the Australian Research Council, Discovery Project Grant DP0880143.}

\thanks{Boris Lerner was partly supported by JSPS}

\begin{abstract}
We introduce a new moduli stack, called the Serre stable moduli stack, which corresponds to studying families of point objects in an abelian category with a Serre functor. This allows us  in particular, to  re-interpret the classical derived equivalence between most concealed-canonical algebras and weighted projective lines by showing they are induced by the universal sheaf on the Serre stable moduli stack. We explain why the method works by showing that the Serre stable moduli stack is the tautological moduli problem that allows one to recover certain nice stacks such as weighted projective lines from their moduli of sheaves. As a result, this new stack should be of interest in both representation theory and algebraic geometry. 
\end{abstract}
\maketitle
Throughout, we work over an algebraically closed base field $k$ of characteristic zero.

\section{Introduction}  \label{sintro}  
Tilting theory has proved to be an extremely fruitful avenue of research linking the theory of algebraic geometry to representation theory. In particular, it has produced derived equivalences
between certain classes of projective stacks and certain classes
of finite dimensional algebras. 

The usual way to set up a derived equivalence is to start
with a projective stack $\mathbb{Y}$ and look for a tilting
complex $T^{\bullet} \in \D^b(\mathbb{Y})$. Then $\Y$ will
be derived equivalent to the algebra $A = \End T^{\bullet}$. For example, if $\Y$ is a weighted projective line as defined by Geigle-Lenzing \cite{GL} and $T$ is a tilting bundle, then the endomorphism algebras $A$ are the concealed-canonical algebras of Lenzing-Meltzer \cite{LM}  which include Ringel's canonical algebras \cite{R} as examples. From this perspective, the main 
question is, how to find the tilting complex. The philosophy of Mukai and Bridgeland-King-Reid \cite{BKR} however, is that derived equivalences in algebraic geometry come stereotypically from moduli problems, the equivalence being given by a Fourier-Mukai transform with kernel the dual universal family. The tilting condition is then elegantly explained through orthogonality of members of the universal family. 

From this point of view, it is more natural to start from the other side, in our case, a finite dimensional algebra $A$. This is also the natural starting point for the representation theorist, who may be ``given'' an algebra to study. Now, the main question becomes: Which moduli problem should you pose to obtain a derived equivalent stack? The traditional approach  (see for example \cite{K1}) is to use quiver GIT and works well enough in the case when $A$ is derived equivalent to a projective scheme $\Y$. You choose some
discrete invariant $\vec{d} \in K_0(A)$ (i.e. a dimension vector) and start with the rigidified moduli stack $\X$ of $A$-modules with dimension vector $\vec{d}$ (see section~\ref{sec:rigidmod}). It turns out that the stack $\X$ is naturally represented as the quotient stack $[\calr/PG]$ where $\calr$ is the space of representations (with chosen basis) and $PG$ is the group of basis change modulo scalars. Thus on choosing a stability condition (which roughly corresponds to choosing a nice open substack of $\X$), one can take a GIT quotient of $\calr$ to produce a $\mathbb{G}_m$-quotient stack. 

It is tempting to guess that whenever $A$ is derived equivalent to a projective stack $\Y$ via a tilting bundle, that $\Y$ can be recovered as some open substack
of $\X$ (for some dimension vector and stability condition). However, an elementary computation in the case where $A$ is a canonical algebra other than the Kronecker algebra (and hence derived equivalent to a weighted projective line $\Y$ which is not a scheme) then there is no open substack of $\X$
which is isomorphic to $\Y$. The key problem is that the (rigidified) automorphism groups of modules do not match up with the inertia groups of the derived equivalent stack. In this paper, we introduce a new moduli stack $\X^S$ called the moduli stack of Serre stable representations, which overcomes
these problems for most concealed canonical algebras. Morally speaking, it is described as follows. The shifted Serre or Nakayama functor $\nu_d$ induces a rationally defined self map on $\X$ and $\X^S$ is the fixed point stack of this self map. Alternately, one can motivate this new stack using Bondal-Orlov's \cite{BO} concept of a point object. These objects are {\em Serre stable}, which in the context of finite dimensional $A$-modules $M$ means $M \simeq \nu_d(M)$. From this perspective, $\X^S$ parametrises flat families of Serre stable modules. Now $\nu_d$ induces a linear endomorphism $\Phi$ of $K_0(A)$. If $\X^S$ is to be non-empty, we thus need $\vec{d}$ to be fixed by $\Phi$, in which case we say it is {\em Coxeter stable}. The Serre stability condition also arises naturally in Bridgeland-King-Reid's criterion for an exact functor to be an equivalence \cite[Theorem~2.4]{BKR}. 

The correct setting for our results are smoothly weighted projective varieties (defined Section~\ref{sec:taut}), a notion which generalises weighted projective lines. Essentially, these are stacks which are generically varieties and stacky behaviour is confined to smooth non-intersecting divisors. The first result concerns the Fano or anti-Fano case (defined Section~\ref{sec:universal}), which for concealed canonical algebras corresponds to the non-tubular case. 

\begin{thm}[Theorem~\ref{thm:universal}] \label{thm:1}
Let $\Y$ be a smoothly weighted projective variety which is either Fano or anti-Fano  and $\mathcal{T}$ be a tilting bundle on $\Y$ with non-isomorphic indecomposable summands. If $\X^S$ is the Serre stable moduli stack of representations for the endomorphism algebra $A = \End_{\Y} \mathcal{T}$ corresponding to the dimension vector $\vec{d}$ of $\mathcal{T}$, then $\Y \simeq \X^S$ and the dual $\mathcal{T}^{\vee}$ is the universal representation.
\end{thm}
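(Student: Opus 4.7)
The plan is to produce a classifying morphism $\phi:\Y \to \X^S$ from the family $\calt^\vee$ and then to show it is an isomorphism by invoking the tilting derived equivalence together with the Fano or anti-Fano hypothesis. For any test morphism $R\to\Y$ the pullback of $\calt^\vee$ is locally free and carries an $A = \End\calt$-action coming from the natural action of $A$ on $\calt$. Since the indecomposable summands $\calt_i$ of $\calt$ are pairwise non-isomorphic, $A$ is basic, so the primitive idempotents of $A$ split each geometric fibre of $\calt^\vee$ into pieces of ranks $\rk \calt_i$, producing a representation of the prescribed dimension vector $\vec{d}$. This defines a morphism to the rigidified moduli stack $\X$ of $A$-representations of dimension vector $\vec{d}$.

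To show $\phi$ factors through $\X^S$, I would verify that each fibre $\calt^\vee|_y$ is Serre stable. Under the tilting equivalence $\Rhom(\calt,-) : \D^b(\Y) \iso \D^b(A^{\mathrm{op}})$, one has $\Rhom(\calt, k(y)) \simeq \calt^\vee|_y$, so the skyscraper $k(y)$ corresponds to this module. Skyscraper sheaves are the prototypical Bondal--Orlov point objects on $\Y$, satisfying $S_\Y(k(y)) \simeq k(y)[n]$ with $n=\dim\Y$. Transported across the equivalence, this reads $\nu_d(\calt^\vee|_y) \simeq \calt^\vee|_y$, which is precisely Serre stability. Running the argument in families using flatness of $\calt$ and base change produces a morphism $\phi : \Y \to \X^S$ together with $\calt^\vee$ as a flat family of Serre stable $A$-representations.

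To upgrade $\phi$ to an isomorphism, I would first establish that every Serre stable $A$-module arises as such a fibre. Via the tilting equivalence this reduces to showing that every point object of $\D^b(\Y)$ is a shift of some skyscraper $k(y)$. This is exactly where the Fano or anti-Fano hypothesis is essential, since it excludes the tubular, Calabi--Yau-like situation in which Serre duality produces whole families of non-skyscraper point objects. Once skyscrapers are shown to exhaust the point objects, $\phi$ is bijective on isomorphism classes of geometric points; the smoothly weighted hypothesis then confines the inertia of $\Y$ to disjoint smooth divisors along which one can directly check that $\Aut_A(\calt^\vee|_y)/\Gm$ matches the inertia of $y$ in $\Y$. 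The main obstacle I anticipate is promoting this bijection plus automorphism-matching to a genuine isomorphism of stacks: one needs to see that the tilting equivalence carries infinitesimal deformations of skyscrapers to deformations of the corresponding modules, so that $\phi$ is étale, and hence an isomorphism, at every point. The cleanest route is to invoke the universal property of the Serre stable moduli stack developed earlier in the paper: once $\calt^\vee$ is verified to be a flat family of Serre stable representations on $\Y$, the induced map $\Y \to \X^S$ is forced to be an isomorphism, with $\calt^\vee$ playing the role of the universal representation.
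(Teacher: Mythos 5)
Your first half is sound and matches the paper: $\calt^{\vee}$ is a flat family of Serre stable $A$-modules on $\Y$ (the paper proves this as a natural isomorphism of bimodules, $\calt^{\vee}\otimes^{\L}_A DA[-1]\simeq \omega_{\Y}\otimes_{\Y}\calt^{\vee}$, using that the tilting equivalence commutes with Serre functors), and this gives the classifying morphism $\Y\to\X^S$. The gap is in the second half. Your ``cleanest route'' --- that once $\calt^{\vee}$ is verified to be a flat family of Serre stable representations on $\Y$, the induced map $\Y\to\X^S$ is \emph{forced} to be an isomorphism --- is circular: every flat family of Serre stable representations over any test scheme $T$ induces a morphism $T\to\X^S$, and almost none of these are isomorphisms. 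There is no universal property available until you have \emph{proved} the family is universal, which is exactly the content of the theorem. The pointwise strategy (bijectivity on geometric points, matching inertia, then \'etaleness) also cannot be carried out as stated, because nothing a priori guarantees that $\X^S$ is a separated Deligne--Mumford stack to which such an argument applies.

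What the paper actually does, and what your sketch is missing, is the construction of the inverse: take an \emph{arbitrary} flat family $\calm$ of Serre stable $A$-modules over a ring $R$ and show that $\calm\otimes^{\L}_A\calt$ is a flat family of Serre stable skyscraper sheaves on $\Y$; then Theorem~\ref{thm:order} ($\W^S\simeq\Y$, the tautological moduli problem) produces a morphism $f:\spec R\to\Y$ with $\calm\otimes_A\calt\simeq f^*\calo_{\Delta}$, and the tilting property gives back $\calm\simeq f^*\calt^{\vee}$. The technical heart is showing that $M\otimes^{\L}_A\calt$ is concentrated in degree $0$: you correctly locate the role of the (anti-)Fano hypothesis here, but the actual argument is that Serre stability forces the cohomologies of the complex to satisfy $\omega_{\Y}\otimes H^{-j}\simeq H^{-j}$, hence to have finite support when $\omega_{\Y}^{\pm 1}$ is ample, and then an Auslander--Buchsbaum depth count on the locally free resolution kills $H^{-j}$ for $j>0$ (Lemma~\ref{lem:heartfano}). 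One then still needs the separate spectral-sequence argument of Proposition~\ref{prop:stillflat} to propagate this from $k$-points to flat families. Asserting that ``point objects are shifts of skyscrapers'' names the desired conclusion but supplies neither of these two steps, and it is these steps, not a formal universal property, that make the morphism an isomorphism.
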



To understand why such a result should hold, we first note that the Serre stable moduli stack can be defined in fairly general contexts, essentially whenever one works in an abelian category with a
functor. In particular, one can start with a stack $\Y$ and ask if there is some tautological moduli problem in $\coh
\Y$ whose solution is $\Y$ itself. In general, this should not be possible as there are non-isomorphic stacks with isomorphic categories of
coherent sheaves. However, if $\Y$ is a projective scheme, then we can look at the rigidified moduli stack $\W$ of skyscraper sheaves which in
this case, coincides with the Serre stable moduli stack $\W^S$, and this of course recovers $\Y$. When $\Y$ is a smoothly weighted projective variety, then $\W$ and $\W^S$ are no longer isomorphic and it is $\W^S$
which recovers the original stack $\Y$. 
\begin{thm}[Theorem \ref{thm:order}] \label{thm:2}
	Let $\Y$ be a smoothly weighted projective variety and $\W^S$ be the Serre stable moduli stack of ``skyscraper'' sheaves on $\Y$. Then $\W^S\simeq \Y$
\end{thm}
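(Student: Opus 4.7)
The plan is to construct a canonical morphism $\Psi\colon \Y \to \W^S$ from the structure sheaf of the diagonal and show it is an equivalence of stacks. Viewing $\mathcal{O}_\Delta \subset \Y \times \Y$ as a $\Y$-family of coherent sheaves on the first factor, the fibres are the skyscrapers $\mathcal{O}_y$. Fibrewise Serre stability should follow from the identification $\Delta^! \mathcal{O}_\Y \simeq \omega_\Y[-\dim\Y]$ combined with the symmetry of $\mathcal{O}_\Delta$ under the swap of factors, which gives $\mathcal{O}_y \iso \nu_d \mathcal{O}_y$ canonically on each fibre. The universal property of $\W^S$ then produces $\Psi$.

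The claim that $\Psi$ is an isomorphism can be checked étale-locally on $\Y$. Since $\Y$ is smoothly weighted, one may cover it by opens of two kinds: (i) the complement of the stacky divisors, which is an ordinary variety; and (ii) étale neighbourhoods of a stacky divisor $D_i$ of weight $p_i$, locally of the form $[U/\mu_{p_i}]$ with $\mu_{p_i}$ acting faithfully on a transverse coordinate. On (i), Serre stability is automatic for length-one skyscrapers, and one recovers the classical fact that a smooth variety is its own moduli of length-one skyscrapers, so $\Psi$ is an isomorphism over the non-stacky locus. The substance of the proof lies in case (ii).

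In the local model $[U/\mu_{p_i}]$ the Serre functor, restricted to the residue gerbe at a stacky point, tensors with the character $\chi_\omega$ of $\mu_{p_i}$ arising from $\omega_\Y$, which has order $p_i$ and thus cyclically permutes the characters of $\mu_{p_i}$ without fixed point. Hence no simple skyscraper is Serre stable; the smallest Serre stable skyscrapers are multiples of the regular representation $k[\mu_{p_i}]$. One then computes $\W^S$ in this local model directly: the groupoid of pairs $(M, \phi)$ with $M$ a copy of $k[\mu_{p_i}]$ and $\phi\colon M \iso \nu_d M$, taken modulo the rigidification built into the definition of $\X^S$, should be equivalent to $B\mu_{p_i}$, matching the residue gerbe of $\Y$. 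The main obstacle is exactly this automorphism-group calculation: the module automorphisms of $k[\mu_{p_i}]$ form a torus of rank $p_i - 1$, and extracting the cyclic subgroup $\mu_{p_i}$ requires tracking carefully how $\phi$ interacts with the rigidification and how the canonical trivialisation $\omega_\Y^{\otimes p_i} \simeq \mathcal{O}$ near $D_i$ normalises $\phi$. Once this local statement is established, $\Psi$ is an étale-local isomorphism, hence an isomorphism of stacks.
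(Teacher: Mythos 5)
Your global strategy (use the diagonal family $\calo_{\Delta}$ to define a map $\Y\to\W^S$, reduce to the local models $[U/\mu_{p_i}]$, observe that at a stacky point the Serre functor permutes the simple skyscrapers cyclically so that the minimal Serre stable object is the regular representation with inertia $\mu_{p_i}$) is the same as the paper's, and that part of your analysis is correct; it matches Proposition~\ref{pisDM} and Example~\ref{eg}. But there is a genuine gap: you only verify the would-be isomorphism on residue gerbes, i.e.\ on $k$-points and their automorphism groups. An isomorphism of stacks is an equivalence $\W^S(T)\to\Y(T)$ for \emph{every} test scheme $T$, and the whole difficulty of the theorem sits in families whose support meets the stacky divisor: given a flat family $\calm$ of Serre stable skyscrapers over a ring $R$ together with an isomorphism $\theta\colon\nu_d\calm\xrightarrow{\sim}\calm\otimes_R\caln$, one must produce a morphism $\spec R\to[U/\mu_{p}]$, that is, an \'etale $\mu_{p}$-torsor $\tilde R/R$ and an equivariant map to the cover. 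Matching points and inertia groups does not produce this torsor, and no amount of care with ``how $\phi$ interacts with the rigidification'' at a single closed point can, because the torsor is a global invariant of the family over $\spec R$. (A secondary structural issue: you propose to check the isomorphism \'etale-locally on the source $\Y$, which presupposes essential surjectivity on $T$-points --- exactly the statement at stake; the paper instead works over an affine cover of the coarse space $Y$ and constructs an explicit inverse functor.)

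The paper's Lemma~\ref{lemma:order} supplies precisely the missing construction. Writing $\coh[U/\mu_p]\simeq\cala\mod$ for the skew group algebra $\cala=\calot\#G$ and decomposing $\calm=\bigoplus_{\mu}\eps_{\mu}\calm$ into character eigensheaves, the Serre-stability isomorphism $\theta$ induces isomorphisms $\eps_{\mu+\chi}\calm\simeq\eps_{\mu}\calm\otimes_R\call$ for an $n$-torsion line bundle $\call$ on $\spec R$ (Sublemma~\ref{sublem:calm}); the required \'etale cover is $\tilde R=\underline{\spec}_R\bigl(\bigoplus_{i=0}^{n-1}\call^{i}\bigr)$, realised intrinsically as the algebra of $R$-linear endomorphisms of $\calm$ compatible with $\theta$, and the $G$-equivariant map $\calot\to\tilde R$ comes from left multiplication. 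This family-level step is what your outline needs before any local-to-global patching; once it is in place, the pointwise computation you describe becomes a consistency check rather than the proof.
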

\noindent 
To a large extent, this explains why the Serre stable moduli stack is the correct stack to look at when we have a concealed canonical algebra. Indeed, we use Theorem~\ref{thm:2} to prove Theorem~\ref{thm:1}.

Theorem~\ref{thm:1} suggests a first approach to answering the question: given a finite dimensional algebra, how do you find a derived equivalent stack? You pick a Coxeter stable dimension vector, compute the Serre stable moduli stack and then check if the dual of the universal representation is tilting. However, one might hope for more. Indeed in \cite{BKR}, the tilting condition comes out of the theory and there is no need to check it case by case. Emulating this, we seek module-theoretic criteria for the dual universal sheaf to be tilting. This has the potential for answering questions such as: given a class $\calc$ of stacks, characterise the endomorphism algebras of tilting bundles on objects of $\calc$.  For example, we have the following characterisation of non-tubular concealed canonical algebras. 

\begin{thm}[Theorem~\ref{tBKR} + Remark to Theorem~\ref{thm:universal}]  \label{thm:3}
Let $A$ be a basic connected finite dimensional algebra of finite global dimension and $\vec{d}\in K_0(A)$ be a minimal Coxeter stable dimension vector. Suppose that
\begin{enumerate}
\item the Serre stable moduli stack $\X^S$ is a weighted projective curve, and
\item any Serre stable module $M$ of dimension vector $\vec{d}$ is the direct sum of modules $M_i$ such that every proper submodule $N$ of $M_i$ satisfies $\sum_{i\geq 0} (-1)^i\dim \Ext^i_A(M,N) <0$.
\end{enumerate}
Then the dual of the universal representation is a tilting bundle giving a derived equivalence between $A$ and $\X^S$. In particular, a basic finite dimensional algebra $A$ is non-tubular concealed canonical if and only if it satisfies the hypotheses above and furthermore, $\ker (\Phi - \id_{K_0(A)})$ is 1-dimensional, where $\Phi$ is the Coxeter transformation. 
\end{thm}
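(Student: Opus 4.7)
The plan is to deduce the derived-equivalence statement from the Bridgeland-King-Reid style criterion of Theorem~\ref{tBKR}, applied to the universal representation $\calu$ on $\X^S$. By the universal property, $\calu$ carries a left $A$-action, and so it determines a kernel on $A \times \X^S$ and hence a pair of adjoint Fourier-Mukai functors between $\D^b(A)$ and $\D^b(\X^S)$. Showing that one of them is an equivalence immediately identifies $\calu^\vee$ as a tilting bundle with endomorphism algebra $A$.

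First I would translate hypothesis (2) into the ``point-object'' orthogonality required by Theorem~\ref{tBKR}. Each summand $M_i$ of a Serre stable module represents a closed point of $\X^S$, and the inequality $\sum_{j\geq 0}(-1)^j \dim \Ext^j_A(M,N) < 0$ on every proper submodule $N \subset M_i$ is a King-style numerical stability condition relative to the full Serre stable module $M$; standard arguments then give $\End_A M_i = k$ and, combined with Serre stability and the minimality of $\vec{d}$, force $\Hom_A(M_p, M_q) = 0$ for modules parametrized by distinct closed points $p \neq q$ of $\X^S$. Hypothesis (1) supplies the remaining geometric input: $\X^S$ is smooth, proper, and one-dimensional, so the Serre functor on $\D^b(\X^S)$ takes the expected form and higher Ext between skyscrapers vanishes outside the expected range. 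Feeding these observations into Theorem~\ref{tBKR} yields that the Fourier-Mukai functor is an equivalence, and hence that $\calu^\vee$ is tilting.

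For the ``in particular'' clause I would argue both directions separately. If $A$ is non-tubular concealed canonical then, by Lenzing-Meltzer, $A = \End_{\Y} \calt$ for a tilting bundle $\calt$ on a non-tubular weighted projective line $\Y$, which is Fano or anti-Fano; Theorem~\ref{thm:1} identifies $\Y$ with $\X^S$ and $\calt^\vee$ with the universal representation, giving condition (1) immediately. Condition (2) follows by transporting the King stability through the equivalence: Serre stable modules correspond to skyscrapers on $\Y$, for which the numerical inequality reduces to the classical stability of skyscraper sheaves on a smooth curve. The $1$-dimensionality of $\ker(\Phi - \id)$ is the standard calculation of the radical of the Euler form for a non-tubular weighted projective line, with the tubular case excluded by its $2$-dimensional radical. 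Conversely, given the hypotheses together with the $1$-dimensional kernel, the first part of the theorem produces a derived equivalence between $A$ and a weighted projective curve $\X^S$; the kernel constraint then forces the numerical invariants of $\X^S$ to be those of a weighted projective \emph{line}, whence $A$ is concealed canonical, and non-tubular since the tubular case would enlarge $\ker(\Phi - \id)$.

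The main obstacle will be the verification of the BKR orthogonality from hypothesis (2): the stability inequality is stated summand-wise on $M_i$ but computed against the full $M$, and one needs Ext-vanishing between Serre stable modules at distinct points of $\X^S$, including the subtle case where two such points lie in the same $\nu_d$-orbit. A careful analysis of how the decomposition $M = \bigoplus M_i$ interacts with the Nakayama twist is needed to separate contributions from different $\nu_d$-orbits and to deduce all the point-object vanishing that Theorem~\ref{tBKR} demands.
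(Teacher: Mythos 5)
Your outline coincides with the paper's: the first assertion is Theorem~\ref{tBKR} once hypothesis (ii) is recognised as regular semisimplicity (the displayed inequality is just $\langle \vec{d},[N]\rangle<0$), and the characterisation is assembled from Theorem~\ref{thm:universal} and Proposition~\ref{p:regssforcc}. However, you have mislocated the genuinely hard step in the BKR verification. By Proposition~\ref{pisDM} a single closed point of $\X^S$ already carries an entire $\nu_d$-orbit of regular simples, so distinct points carry disjoint orbits and the cross-point vanishing $\Hom_A(Fk_{\X^S}(p),Fk_{\X^S}(p'))=0$ (together with its Serre dual) is immediate from Proposition~\ref{pendreg}; there is no ``subtle case where two such points lie in the same $\nu_d$-orbit''. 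The real work is at $p=p'$: both $\Ext^1_{\X^S}(k_{\X^S}(p),k_{\X^S}(p))$ and $\Ext^1_A(M,M)$ are $q_p$-dimensional, and one must show $F$ induces an \emph{isomorphism} between them rather than merely a map between spaces of equal dimension. The paper does this by an explicit Kodaira--Spencer computation: the non-split Serre stable self-extension $\bar{\calu}=\oplus_i\nu_1^iE$ of $M$ is a flat family over $k[\eps]$, hence is identified with $\calo_{2\tilde{p}}\otimes_{\tilde{Y}}\calu$, and the images of the generating extension classes are recognised as direct summands of that extension. Your ``standard orthogonality arguments'' do not supply this step.

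In the forward direction of the characterisation, the assertion that ``Serre stable modules correspond to skyscrapers on $\Y$'' also needs proof for \emph{arbitrary} Serre stable $M$ of dimension vector $\vec{d}$, not just for those of the form $\Rhom_{\Y}(\calt,S)$ with $S$ simple: one must first show $M\otimes^{\L}_A\calt$ is a finite length sheaf (Lemmas~\ref{lem:heartfano} and \ref{lem:heartwtdline}, using the (anti-)Fano hypothesis and an Auslander--Buchsbaum depth argument), and then that this sheaf is \emph{semisimple}. The latter is not ``classical stability of skyscraper sheaves on a smooth curve'': at a stacky point the class $[k_{\Y}(p)]$ is also realised by the non-split uniserial, Serre \emph{unstable} sheaves $P_{\mu}\otimes k(p)$ of Example~\ref{eg}, and it is Serre stability (handled in the paper via Proposition~\ref{p:regssforcc} or Proposition~\ref{prop:kpregsimple}) that excludes them. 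The converse direction of your characterisation is fine in outline.
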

\noindent
Lenzing-de la Pe\~{n}a \cite{LdP} characterise concealed-canonical algebras as those with a sincere separating exact subcategory and it would be interesting to see if this theorem can be used to give an elegant alternate proof of this. Note that the proof in \cite{LdP} does not ``produce'' the derived equivalent stack directly, but instead operates very much in the reverse perspective mentioned above: one first computes all the categories of coherent sheaves on weighted projective lines, and then tilts algebras to these. With the aim of independently re-deriving results such as Lenzing-de la Pe\~{n}a's, we will as much as possible, avoid assuming other results which give existence of tilting bundles.

Hypothesis (ii) of Theorem~\ref{thm:3} is a module-theoretic condition related to classical stability. Lenzing and de la Pe\~{n}a essentially show (Proposition~\ref{p:regssforcc}) it holds for algebras $A$ with a sincere separating exact subcategory, en route to establishing their module-theoretic criterion for being concealed-canonical. If furthermore $A$ is not tubular, the minimal Coxeter stable dimension vector is unique so there is no guesswork involved in choosing a dimension vector here. The moduli-theoretic condition (i) is unfortunate, and current work aims to replace it with a module-theoretic condition as occurs in the usual Bridgeland-King-Reid theory. We are forced to include it as  we don't have the required stack technology. The main obstruction is that we don't have a stable reduction theorem to guarantee that $\X^S$ is proper. 

Nevertheless, we do show (from first principles) that the hypotheses of Theorem~\ref{thm:3} hold for all canonical algebras. In this case, a similar result has been reached by Abdelghadir-Ueda \cite{AU} using quiver GIT. However, they consider an ad hoc moduli space of enriched quiver representations instead of our Serre stable moduli stack. Our approach also does not require the choice of a separate stability condition. The Serre stability condition in this case, is enough to remove modules which would otherwise cause the moduli stack to be badly behaved e.g. non-separated. 

We hope the contents of this paper reinforces the following not so well advertised theme in non-commutative algebraic geometry: moduli spaces are an interesting and fruitful way to study non-commutative algebras. We see this theme already appearing in Artin-Tate-Van den Bergh's paper \cite{ATV} which kicked off the study of non-commutative projective geometry by looking at moduli spaces of point modules to unlock secrets in the Sklyanin algebra. In general, given any moduli stack $\mathbb{M}$ of $A$-modules, the universal sheaf $\mathcal{U}$ can be considered an $(\calo_{\mathbb{M}},A)$-bimodule and $\Hom, \otimes$ can be used to relate the categories of quasi-coherent sheaves on $\mathbb{M}$ and $A$-modules. The question is which moduli stacks will easily give interesting information and the point of this paper, is to see how the Serre stable moduli stack is a good candidate in many contexts. 

The outline of this paper is as follows. Section~\ref{sec:rigidmod} reviews aspects of stack theory as it relates to the moduli of $A$-modules. It is aimed at representation theorists. In Section~\ref{sec:serrestable}, we introduce the Serre stable moduli stack. To understand this stack, it is instructive to study its $k$-points, something we do in Section~\ref{sec:kpoints}. Condition~(ii) of Theorem~\ref{thm:3} naturally arises here. In Section~\ref{scanonical}, we study canonical algebras ``afresh'' via moduli spaces. In particular, we compute from first principles the Serre stable moduli stack for canonical algebras and dimension vector $\vec{d} = \mathbf{1}$ and show that it satisfies the hypotheses of Theorem~\ref{thm:3}. In Section~\ref{sec:Beil}, we also compute the Serre stable moduli stack for the Beilinson algebra and dimension vector $\mathbf{1}$, comparing our result with the traditional approach via quiver GIT. In both these examples, we'll observe a nice feature of the Serre moduli stack, that we do not need to choose a separate stability condition as occurs for quiver GIT, and that the choice of dimension vector is essentially locked in. We review cyclic quotient stacks in Section~\ref{sec:serrestacks} and the ``Serre'' functor in this case, in preparation for studying the Serre stable moduli stack of ``skyscraper'' sheaves. The local computations in this section will also clarify the Serre stability condition. Section~\ref{sTistilting} is devoted to proving Theorem~\ref{thm:3} and hence, that the dual of the universal sheaf is tilting in the canonical algebra case. This reproves Geigle-Lenzing's derived equivalence. Theorems~\ref{thm:2} and \ref{thm:1} are then proved in Sections~\ref{sec:taut} and \ref{sec:universal} respectively. 

\vspace{3mm}

\noindent
\textbf{Conventions} Throughout this paper, $A$ will denote a finite dimensional $k$-algebra. Stacks will be denoted using the blackboard bold font such as $\Y, \X, \W$.  
By default, $A$-modules will be right modules, though occasionally, we will need to look at left modules, for example when looking at duals of these modules. Similarly, modules $\calm$ over $\calo_{\Y} \otimes_k A$ will usually be viewed as $(\calo_{\Y},A)$-bimodules with $A$ acting on the right and ``functions'' in $\calo_{\Y}$ acting on the left. The unadorned tensor symbol $\otimes$ will denote the tensor product over $k$. 

\vspace{3mm}
\noindent
\textbf{Acknowledgements} We would like to thank Jack Hall for a very helpful discussion during his visit.
	Furthermore, Boris
	would like to thank Osamu Iyama, Laurent Demonet and Gustavo Jasso for numerous insightful
comments during his stay in Nagoya.

\section{The rigidified moduli stack of modules}\label{sec:rigidmod}

In this section, we recall the moduli stack of modules and its description as a quotient stack. We also recall the less well-known procedure of rigidification. This overview is aimed at representation theorists with only a passing acquaintance with stacks. Although it is too brief to allow such readers to follow all the proofs in this paper, it should allow them to understand and appreciate the results. The reader who wishes to see more details about stacks should consult standard texts such as \cite{LM-B}, \cite{Kr}, \cite{Stacks}. 

Let $A$ be a basic finite dimensional $k$-algebra so we may write $A = kQ/I$ for some quiver $Q=(Q_0,Q_1)$ and admissible ideal $I$ (see \cite{ASS} page 53). We let $e_v \in A$ be the idempotent corresponding to the vertex $v \in Q_0$. We fix a dimension vector $\vec{d}\colon Q_0 \to \N\colon v \mapsto d_v$ which can also be viewed as an element of the Grothendieck group $K_0(A)$. 
Consider the affine space  $\mathbb{A}^Q:=\prod\limits_{v \to w\in Q_1} \Hom_k(k^{d_v},k^{d_w})$ whose $k$-points correspond to the representations of $Q$ of the form $V = k^{\vec{d}} := \oplus_{v} k^{d_v}$ and hence isomorphism classes of $kQ$-modules $M$ with a chosen (ordered) basis (for each $Me_v$). The ideal $I$ determines a closed subscheme $\mathcal{R}$ of $\mathbb{A}^Q$ corresponding to the $A$-modules. Now $\mathcal{R}$ is a fine moduli space parametrising $A$-modules of dimension vector $\vec{d}$ with a chosen basis. Informally, this means that for any commutative ring $R$, the $R$-points of $\mathcal{R}$ correspond to isomorphism classes of $(R \otimes A)$-modules with given dimension vector and $R$-basis. More precisely, let $\mathcal{U} = \calo_{\mathbb{A}^Q}^{\vec{d}}$ be the universal representation of $Q$ on $\mathbb{A}^Q$. The fibre above $p \in \mathbb{A}^Q$ is simply the representation $\calo_p \otimes_{\mathbb{A}^Q} \mathcal{U} = V_p$ corresponding to $p$. Then $\mathcal{U}|_{\mathcal{R}}$ is an $(\calo_{\mathcal{R}} \otimes A)$-module and for any $R$-point $f: \spec{R} \lm \mathcal{R}$, $f^* \mathcal{U}|_{\mathcal{R}}$ is an $(R \otimes A)$-module of dimension vector $\vec{d}$ with a chosen ordered basis. Furthermore, the isomorphism classes of such modules with chosen basis are given by a unique $R$-point in this fashion. We call $ \mathcal{U}|_{\mathcal{R}}$ the {\it universal representation} because of this universal property. 

Unfortunately, if we try to parametrise isomorphism classes of $A$-modules (without chosen basis), we find that there is in general no such universal $A$-module and one main obstruction is that $A$-modules have automorphisms (see \cite[Chapter~2, Section~A]{HM} for an explanation of this phenomenon). Algebraic geometers can often circumvent this obstacle by enlarging the category of schemes to stacks. 

\subsection{Review of stacks}\label{subsec:reviewstacks}

For us, we will view a stack $\mathbb{X}$ as a sheaf of groupoids. The data involved in defining such a stack thus consists of:
\begin{enumerate}
\item for each noetherian test scheme $T$, a groupoid $\mathbb{X}(T)$ viewed as a category, all of whose morphisms are isomorphisms, and
\item for each map of test schemes $f:T' \lm T$, a {\it pullback} functor $f^*:\mathbb{X}(T) \lm \mathbb{X}(T')$.
\end{enumerate}
We omit the long list of axioms these data must satisfy. Informally, the isomorphism classes in $\mathbb{X}(T)$ should be thought of as morphisms $T \lm \mathbb{X}$. Categorifying the set of $T$-points allows us to remember automorphisms which prevented the existence of universal families. Hence we will refer to $\mathbb{X}(T)$ as the {\it category of $T$-points of $\mathbb{X}$}. A morphism of stacks is simply a functor which respects pullback. Any (quasi-separated) scheme $X$ gives rise to a stack (also denoted $X$) defined as follows: $X(T)$ is the category whose objects are the $T$-points of $X$, and the only morphisms are the identity. In this way, the category of (quasi-separated) schemes embeds in the category of stacks. One might wonder if the image of this embedding is the stacks whose category of $T$-points (for all $T$) only have identity morphisms (and so are essentially sets). This is almost true (one needs to include algebraic spaces). 

We now describe the moduli stack $\tilde{\X}$ of $A$-modules (of dimension vector $\vec{d}$). 
Following \cite{K1} we first make the following definition of a family
of $A$-modules:
\begin{defn}
		Let $T$ be scheme. A \emph{flat family of $A$-modules
		over $T$} is a finitely generated $\calo_T\otimes A$-module $\mathcal{M}$ which is locally free over $T$. If
		$T=\spec R$ then we simply say \emph{a flat family over $R$}. If $A$ is described by a quiver with relations then we also call a flat family of modules a \emph{flat family of representations}. This is just a representation of
$Q$ with the given relations in the category of locally free sheaves over $T$. The dimension vector of $\mathcal{M}$ is given by $d_v = \text{rank}_T \mathcal{M}e_v$. 
\end{defn}

Let $\tilde{\mathbb{X}}$ be the stack defined as follows. For a test scheme $T$, objects of $\tilde{\mathbb{X}}(T)$ are the flat families of $A$-modules over $T$ of dimension vector $\vec{d}$ and the morphisms are the isomorphisms in the category of $\calo_T \otimes A$-modules. Given a morphism $f:T' \lm T$ and $\mathcal{M} \in \tilde{\mathbb{X}}(T)$, we have the usual pullback of sheaves which defines $f^*\mathcal{M} \in \tilde{\mathbb{X}}(T')$. This gives the pullback functor of the stack. To show these data do indeed satisfy all the stack axioms, it is usual to identify it with another well-known stack.

For this, we need to introduce the quotient stack construction, which will be vital for us. Let $X$ be a quasi-separated scheme and $G$ an algebraic group acting on $X$. We will need to use the notion of a {\em $G$-torsor} (also called a $G$-bundle or principal homogeneous space for $G$) whose definition can be found in \cite[Chapter~III, \textsection 4]{Mil}. We define the {\it quotient stack} $[X/G]$ whose category of $T$-points consists of diagrams 
\[\xymatrix{
	\tilde{T} \ar[r]^{\tilde{f}} \ar[d]^{\pi} & X \\
	T & 
}\]
where $\tilde{T} \lm T$ is a $G$-torsor and $\tilde{f}: \tilde{T} \lm X$ is a $G$-equivariant morphism. The morphisms are precisely the isomorphisms of this diagram compatible with given structure. There is a natural quotient morphism $X \lm [X/G]$ defined by the functor $X(T) \lm [X/G](T)$ which sends the $T$-point $f:T \lm X$ to the trivial $G$-torsor $G \times T \lm T$ and $G$-equivariant map 
$$ G \times T \xrightarrow{1_G \times f} G \times X \xrightarrow{\alpha} X $$
where $\alpha$ is the action of $G$ on $X$. For quotient stacks, automorphism groups are easily computed. If $X$ is a $k$-variety and $p\in X$ a $k$-point, the automorphism group of its image in $[X/G]$ is simply the stabliser group $\text{Stab}_G p$. For this reason, we will usually refer to these automorphism groups as {\em inertia groups}. 

The stack $\tilde{\mathbb{X}}$ of $A$-modules of dimension vector $\vec{d}$ turns out as one would want, to be the quotient stack of $\mathcal{R}$ by the group of change of bases. More precisely, let  $G = \prod\limits_{v \in
Q_0} GL_{d_v}$ which acts naturally on $\mathcal{R}$. We briefly describe the isomorphism $\tilde{\mathbb{X}} \simeq [\mathcal{R}/G]$. Given a flat family of $A$-modules $\mathcal{M}  = \oplus_{v \in Q_0} \mathcal{M}_v \in \tilde{\mathbb{X}}(T)$, the frame bundle $\pi_v:\tilde{T}_v \lm T$ of the rank $d_v$ vector bundle $\mathcal{M}_v$ is a $GL_{d_v}$-torsor whose fibre above a $k$-point $p$ is just the group of vector space isomorphisms $k^{d_v} \lm \calo_p \otimes_T \mathcal{M}_v$. The fibre product of these frame bundles $\tilde{T}_v, v \in Q_0$ over $T$ gives a $G$-torsor $\pi:\tilde{T} \lm T$ whose $k$-points are just the $A$-modules of form $\calo_p \otimes_T \mathcal{M}$ together with a choice of basis. There is hence a $G$-equivariant morphism $\tilde{f}: \tilde{T} \lm \mathcal{R}$ and the pair $(\pi,\tilde{f})$ defines an element of $[\mathcal{R}/G](T)$. This turns out to define the isomorphism $\tilde{\mathbb{X}} \lm [\mathcal{R}/G]$. 

The inverse isomorphism is given as follows. Consider a $G$-torsor $\pi:\tilde{T} \lm T$ and $G$-equivariant morphism $\tilde{f}: \tilde{T} \lm \mathcal{R}$ defining an object in $[\mathcal{R}/G](T)$. Note first that the universal sheaf $\mathcal{U}|_{\mathcal{R}}$ is naturally a $G$-equivariant sheaf so $f^*  \mathcal{U}|_{\mathcal{R}}$ is a $G$-equivariant sheaf on $\tilde{T}$. This descends (via descent along a torsor) to a flat family of $A$-modules $(f^*\mathcal{U}|_{\mathcal{R}})^G$ over $T$. This defines a functor $[\mathcal{R}/G](T) \lm \tilde{\mathbb{X}}(T)$ and yields the inverse functor. For this reason, we will refer to $\mathcal{U}|_{\mathcal{R}}$ together with its $G$-action as the {\it universal sheaf} on $[\mathcal{R}/G]$.

\subsection{Weighted projective curves}\label{subsec:wtdlines}

The weighted projective line as studied by representation theorists is usually viewed as the quotient stack of a punctured surface by a 1-dimensional group. However, it is more geometrically meaningful to define it by gluing together quotient stacks of the form $[U/\mu_p]$ where $U$ is a 1-dimensional variety and $\mu_p$ is the cyclic group of $p$-th roots of unity. We will use this latter formulation. Furthermore, we will define weighted projective curves since this involves no more work. The relation between the two approaches is made explicit in the appendix.

We start with a smooth projective curve $C$. 
Let $q_1, \ldots, q_n \in C$ be distinct points where we ``weight'' the curve $C$, that is introduce stacky behaviour. Let $p_1, \ldots, p_n$ be integers $\geq 2$ called the {\it weights}. To these data, we define the weighted projective curve $\mathbb{Y} = \mathbb{Y}(\sum p_i q_i)$ together with a morphism $\psi: \mathbb{Y} \lm C$ as follows. Above $U_0 := C - \{p_1,\ldots, p_n\}$, $\psi$ is an isomorphism so $\mathbb{Y}$ has an open substack which is a scheme and in fact, an open subset of the original curve $C$. Above the point $q_i$, we pick a sufficiently small open neighbourhood $U_i \subset C$ disjoint from all the other $q_j$'s. Let $t \in \calo_{U_i}$ be a local parameter defining $q_i$ and $\tilde{U}_i =\spec \calo_{U_i}[s]/(s^{p_i} - t)$ so $\tilde{U}_i \lm U_i$ is a $\mu_{p_i}$-cover of $U_i$ which is totally ramified above $q_i$ and unramified elsewhere. We define $\psi^{-1}(U_i) = [\tilde{U}_i/\mu_{p_i}]$. Note that $\tilde{U}_i \lm U_i$ factors as 
$$ \tilde{U}_i \xrightarrow{\rho} [\tilde{U}_i / \mu_{p_i}] \xrightarrow{\psi_i} U_i$$
where $\rho$ is the quotient map described in Subsection~\ref{subsec:reviewstacks}. To define $\psi_i$, consider an object of $[\tilde{U}_i / \mu_{p_i}](T)$ given by the $G$-torsor $\pi: \tilde{T} \lm T$ and $G$-equivariant map $\tilde{T} \lm U_i$. Then $\psi_i$ of this object is the unique morphism $\beta \in U_i(T) = \Hom(T,U_i)$ which makes the diagram below commute
$$\begin{CD}
   \tilde{T} @>>> \tilde{U_i} \\
    @V{\pi}VV @VVV \\
   T @>{\beta}>> U_i
  \end{CD}$$
 Since $\mu_{p_i}$ acts freely away from ramification, the map $\psi_i: \psi^{-1}(U_i) \lm U_i$ is an isomorphism away from $q_i$. However, the inertia group of the point above $q_i$ is the stabiliser group $\mu_{p_i}$. Hence, in the weighted projective curve $\Y$, the point $q_i$ is replaced with a ``stacky'' point. We call $C$ the {\em coarse moduli scheme} of $\Y$ since it is, in a sense that can be made precise, the ``best'' scheme approximation to $\Y$. When $C = \PP^1$, we call $\Y$ a {\em weighted projective line}. 

We recall that if $T$ is a tilting bundle on $\mathbb{Y}$ and $A$ is the concealed-canonical algebra $\End_{\mathbb{Y}} T$, then the non-stacky $k$-points of $\mathbb{Y}$ correspond to the tubes of $A$ of period 1 and a stacky point with inertia group $\mu_{p_i}$ corresponds to a tube of period $p_i$. 

The inertia groups of the weighted projective line are generically trivial. Every non-zero $A$-module has at least a copy of $\mathbb{G}_m$ in its automorphism group, so the moduli stack $\tilde{\mathbb{X}}$ above is never a weighted projective line. There is however, an easy way to remove this common $\mathbb{G}_m$ from the inertia group which we describe in the next subsection.

\subsection{Rigidification}\label{subsec:rigid}

We describe here the process of rigidification in our specialised context, as one might find for example in \cite[Section~5]{ACV}. One manifestation of the common copy of $\mathbb{G}_m$ in the automorphism groups of $A$-modules is that the diagonal copy of $\mathbb{G}_m$ in $G$ acts trivially on $\calr$. Thus the easy way to remove the common copy of $\mathbb{G}_m$ is to replace $[\calr/G]$ with $[\mathcal{R}/(PG)]$ where $PG = G/\mathbb{G}_m$. 

We now define a moduli stack $\X$ which gives a module-theoretic interpretation of this new quotient stack. We start by defining a pre-stack $\X^{pre}$ (pre-stacks are defined by the same data \textsection~\ref{subsec:reviewstacks} as a stack, but satisfy less axioms). 
For a noetherian test scheme $T$, we let the objects of $\X^{pre}(T)$ be the objects of $\tilde{\X}(T)$. However, given objects $\calm, \caln \in \X^{pre}(T)$ the morphisms from $\calm$ to $\caln$ in $\X^{pre}(T)$ will consist of equivalence classes of isomorphisms
$\theta\colon\calm \to \call \otimes_T \caln$ of $ \calo_T \otimes A$-modules for some line bundle $\call$
on $T$. If $\theta'\colon\calm \to \call' \otimes_T \caln$ is another such morphism, we say $\theta,\theta'$ 
are {\em equivalent} if and only if there is an isomorphism $\ell\colon \call \to \call'$ such that
$\theta' = (\ell \otimes {\rm id}) \theta$. Note that if $\call = \call'$ then this just means
isomorphisms differ by an element of $\mathbb{G}_{m,T}$. Given another isomorphism $\phi\colon\caln
\to \call'' \otimes \calp$ with $\calp \in \X^{pre}(T)$ and $\call''$ a line bundle on $T$, we obtain
another isomorphism $(\call \otimes \phi) \theta\colon \calm \to \call \otimes_T \caln \to 
\call \otimes_T \call'' \otimes_T \calp$. The equivalence class of this isomorphism remains unchanged if
we replace $\theta$ and $\phi$ with equivalent morphisms, so we obtain a composition law
on $\X^{pre}(T)$. Unfortunately, $\X^{pre}$ may fail the sheaf axiom for a stack, but sheafifying or ``stackifying'' it will yield a stack $\X$. 

It will be instructive to look at the special case where there is a vertex $v_0$ such that $d_{v_0} = 1$. This means that if $\calm = \oplus \calm_v \in \X^{pre}(T)$, then $\calm_{v_0}$ is a line bundle on $T$ so we may, up to isomorphism, replace $\calm$ with $\calm_{v_0}^{-1} \otimes_T \calm$ and so assume $\calm_{v_0} \simeq \calo_T$. Performing this replacement is called rigidification which is why $\X$ is called the {\it rigidification} of $\tilde{\X}$. When we introduce the moduli stack of Serre stable representations, we will see it will be convenient to include families which are not necessarily rigidified. There will be many interesting cases where the $d_{v_0} = 1$ assumption holds. For example, if $A$ is canonical, the dimension vector of interest for us will be $\vec{d} = \mathbf{1}$ where all entries are 1, whilst if $A$ is tame hereditary, the dimension vector $\vec{d}$ of interest for us will be the purely imaginary root, which also always has at least one 1 associated to a vertex. 

\begin{prop}
	With the above definitions, we have $\X \simeq [\mathcal{R}/PG]$.
\end{prop}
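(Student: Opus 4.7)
The plan is to exhibit a natural morphism of pre-stacks $F\colon \X^{pre} \lm [\calr/PG]$ and then verify that, after stackification, the induced map $\X \lm [\calr/PG]$ is an isomorphism. Since $[\calr/PG]$ is already a stack, it will suffice to show that $F$ is fully faithful on fibre groupoids and essentially surjective étale-locally on the test scheme.

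I would define $F$ on objects by combining the isomorphism $\tilde{\X} \iso [\calr/G]$ established above with the push-out morphism $[\calr/G] \lm [\calr/PG]$ induced by the surjection $G \twoheadrightarrow PG$. The push-out is well defined precisely because the diagonal $\Gm \subset G$ acts trivially on $\calr$, so the $G$-equivariant classifying map $\tilde{T} \lm \calr$ attached to a flat family descends to a $PG$-equivariant map $\tilde{T}/\Gm \lm \calr$. As the objects of $\X^{pre}(T)$ agree with those of $\tilde{\X}(T)$, this unambiguously defines $F$ on objects.

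To define $F$ on morphisms, I would take a morphism $[\theta]\colon \calm \lm \caln$ in $\X^{pre}(T)$ represented by some $\theta\colon \calm \lm \call \otimes_T \caln$. Étale-locally on $T$, picking a trivialisation of $\call$ turns $\theta$ into an honest $\calo_T \otimes A$-module isomorphism, determined up to a section of $\Gm$. This is exactly the data of an isomorphism of the associated $PG$-torsors compatible with the maps to $\calr$. The equivalence relation identifying $\theta$ with $(\ell \otimes \id)\theta$ for $\ell\colon \call \iso \call'$ matches this $\Gm$-indeterminacy, so $F$ is well defined on morphisms. For full faithfulness I would run the construction in reverse: given an isomorphism of $PG$-torsors attached to $\calm$ and $\caln$, lift it étale-locally to a $G$-torsor isomorphism, i.e.\ to local $\calo_T \otimes A$-module isomorphisms; the $\Gm$-discrepancy on overlaps patches into a line bundle $\call$ on $T$ together with an isomorphism $\calm \lm \call \otimes_T \caln$, which is precisely a morphism in $\X^{pre}(T)$.

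Essential surjectivity is local: any object of $[\calr/PG](T)$ becomes, after an étale cover $U \lm T$, a trivial $PG$-torsor together with a morphism $U \lm \calr$, which lifts to a trivial $G$-torsor and hence to an object of $\tilde{\X}(U) = \X^{pre}(U)$ mapping under $F$ to the given object. Since $[\calr/PG]$ is a stack and $\X$ is the stackification of $\X^{pre}$, the universal property of stackification upgrades $F$ to the desired isomorphism $\X \iso [\calr/PG]$. The only real subtlety, more bookkeeping than obstacle, is matching the twist by $\call$ in the morphism description of $\X^{pre}$ with the gluing data coming from local $G$-lifts of a $PG$-torsor isomorphism; conceptually this is just the statement that $[\calr/G] \lm [\calr/PG]$ is a $B\Gm$-gerbe, and the $\X^{pre}$-construction is exactly the process that rigidifies it.
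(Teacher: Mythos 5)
Your proof is correct, and it is built on the same underlying mechanism as the paper's --- the central extension $1 \to \mathbb{G}_m \to G \to PG \to 1$ and the fact that $PG$-torsors lift to $G$-torsors \'etale-locally, with the $\mathbb{G}_m$-ambiguity of the lift accounting exactly for the line-bundle twist $\call$ in the morphisms of $\X^{pre}$ --- but you organise it in the opposite direction and more uniformly. The paper constructs a functor $[\calr/PG](T) \to \X^{pre}(T)$ and splits into cases: when some $d_{v_0}=1$ the extension $G \to PG$ splits, so every $PG$-torsor lifts globally to a $G$-torsor and one concludes directly that $\X^{pre}$ is already a stack (no stackification needed); in general one only gets the lift after an \'etale extension, by killing the Brauer obstruction $\beta(\gamma) \in H^2(T,\mathbb{G}_m)$. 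You instead go from $\X^{pre}$ to $[\calr/PG]$, check full faithfulness and \'etale-local essential surjectivity, and invoke the universal property of stackification; this handles both cases at once and makes the conceptual picture explicit ($[\calr/G] \to [\calr/PG]$ is a $\mathbb{G}_m$-gerbe and $\X^{pre}$ is its rigidification). What you lose relative to the paper is the finer statement that $\X^{pre}=\X$ when a vertex has $d_{v_0}=1$, which is a convenient fact in the later examples (canonical and tame hereditary algebras); what you gain is a case-free argument that proves the proposition as stated. Your verification of full faithfulness is sound: the local $G$-lifts of a $PG$-torsor isomorphism differ on overlaps by units, and by Hilbert 90 the resulting cocycle is a genuine line bundle $\call$ with $\theta\colon \calm \to \call\otimes_T\caln$, well defined up to exactly the equivalence imposed in $\X^{pre}$.
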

\begin{proof} (Sketch) We will carry out the proof under the assumption that $v_0 \in Q_0$ is a vertex with $d_{v_0} = 1$ and indicate afterwards how to modify the proof in general. 
	We use \'{e}tale cohomology below. Consider the
	exact sequence of group schemes
	\[ 1 \lm \mathbb{G}_m \lm G \lm PG \lm 1. \] 
This sequence is split since we may project $G$ onto $GL_{d_{v_0}} \simeq \Gm$ and thus identify $PG$ with the subgroup $G' = \{ (g_v)_{v \in Q_0}| g_{v_0} = 1\}$. Hence in the exact sequence below
\[ 0\lm H^1(T,\mathbb{G}_m) \lm H^1(T,G) \xrightarrow{\psi} H^1(T,PG) \xrightarrow{\beta} H^2(T,\mathbb{G}_m)\]
$\psi$ is a split surjection. 

An object of $[\mathcal{R}/PG](T)$ consists of a $PG$-torsor $\tilde{T} \lm T$ and a $PG$-equivariant  map $\phi\colon\tilde T\to \calr$. Now $H^1(T,\mathbb{G}_m), H^1(T,G)$ and $H^1(T,PG)$ classify $\mathbb{G}_m$-torsors, $G$-torsors and $PG$-torsors over $T$ respectively. Hence the split sequence above shows that $\tilde{T}$ comes from a $G$-torsor $T' \lm T$ in the sense that $T'/\mathbb{G}_m = \tilde{T}$. This $G$-torsor is unique up to a $\mathbb{G}_m$-torsor and, together with the $G$-equivariant map $T' \lm \tilde{T} \lm \calr$ defines an object of $\calm \in \tilde{\X}(T)$ by the isomorphism $\tilde{\X} \simeq [\calr/G]$. (In fact, our choice of splitting ensures that $\calm$ is rigidified). From the above discussion, we see that the morphisms in $[\calr/PG](T)$ and $\X^{pre}(T)$ correspond so there is a fully faithful functor $[\calr/PG](T) \lm \X^{pre}(T)$. It is dense since any object $\calm \in \X^{pre}(T)$ can be assumed to be rigidified and hence the frame bundle of $\calm_{v_0}$ is trivial. The corresponding $G$-torsor thus comes from a $G'$-torsor and we are done in this special case. Moreover, $\X^{pre}$ is already a stack and we do not need to stackify it, that is, $\X^{pre} = \X$. 

If we do not make our assumption on our special vertex, then $\psi$ may not be split surjective so a $PG$-torsor, say corresponding to  $\gamma \in H^1(T,PG)$ may not lift to a $G$-torsor. However, it will lift on passing to an \'{e}tale extension as can be seen by splitting the Brauer class $\beta(\gamma)$. 
%
\end{proof}
\noindent
We call $\X$ the {\em rigidified moduli stack of $A$- modules} with dimension vector $\vec{d}$. If the dimension vector needs to be noted, we will denote this stack by $\X_{\vec{d}}$. 

Given an $A$-module $M$ of dimension vector $\vec{d}$ which is a {\it brick} in the sense that $\End_A M = k$, the inertia group of the corresponding point of $\X$ is now trivial as desired. However, automorphism groups of regular $A$-modules like
\[\xymatrix{
	& k \ar[r]^{0} & k  \ar[dr]^{0} & \\
	k \ar[ur]^{0} \ar[rrr]^{1} & & & k 
}\]
are typically products of copies of $\mathbb{G}_m$ so in general, the rigidified moduli stack $\X$ is still not a weighted projective line. In the next section, we introduce the Serre stable moduli stack which rectifies this problem. 

\section{The Serre stable moduli stack $\X^S$}\label{sec:serrestable}

In this section, we introduce the main new object of study, the \emph{moduli stack of Serre stable
representations}. Morally speaking, the Serre functor induces a rational self map on the rigidified moduli stack $\X$ of $A$-modules of dimension vector $\vec{d}$, and we look at the fixed point stack of this map.

Let $A$ be a finite dimensional $k$-algebra with finite global dimension. The \emph{Nakayama
functor}
\[\nu:=-\otimes^\L_ADA,\quad{\rm where}\quad D=\Hom_k(-,k)\]
is the Serre
functor in $\D^b(\Amod)$ in the sense that for any $M,N\in \mod A$ we have 
\[\Hom_{\D^b}(M,N)\simeq D\Hom_{\D^b}(N,\nu M)\]
(see \cite{H}). Furthermore, for any $d\in\Z$ we let $\nu_d:=\nu\circ [-d]$ where $[-d]$ denotes the shift in the triangulated category $\D^b(\Amod)$.

Note that the action of $\nu$ and $\nu_d$ extend to $\D^b(\mod \calo_T\otimes_k A)$, however $\nu$ is no longer a Serre
functor in this category.

We will be interested in {\em Serre stable} $A$-modules i.e. those 
$A$-modules $M$ which satisfy $\nu_d(M)\simeq M$. To show families of such
modules are stable under base change, we will need the following:

\begin{lemma} \label{lserrebchange} 
Let $\calm$ be flat family of
	$A$-modules over $R$ and $R\to S$ be a morphism of commutative noetherian rings. Suppose
	further that the $R$-modules $h_i(\calm\otimes^\L_ADA)$ are flat for $i< n$. Then for all
	$i \leq n$, there is a natural isomorphism 
	\[ S \otimes_R h_i(\calm\otimes^\L_ADA)\simeq h_i(S\otimes_R\calm\otimes^\L_ADA).\]
\end{lemma}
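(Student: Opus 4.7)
The plan is to reduce this base-change statement, via a finite projective resolution of $DA$, to the classical base-change theorem for a complex of flat $R$-modules, and then to handle the latter with a hyperhomology spectral sequence.

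Since $A$ has finite global dimension, I would first pick a finite projective resolution $P_\bullet \to DA$ of $DA$ as a left $A$-module, and set $K_\bullet := \calm \otimes_A P_\bullet$. Because each $P_j$ is a summand of a free left $A$-module and $\calm$ is flat over $R$, every $K_j$ is flat over $R$. By construction $K_\bullet$ represents $\calm \otimes^\L_A DA$, so that $h_i(K_\bullet) = h_i(\calm \otimes^\L_A DA)$. As $-\otimes_A P_\bullet$ commutes with $S \otimes_R -$, the complex $S \otimes_R K_\bullet \cong (S \otimes_R \calm) \otimes_A P_\bullet$ similarly represents $(S \otimes_R \calm) \otimes^\L_A DA$. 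The problem is thereby reduced to the following: for a bounded complex $K_\bullet$ of flat $R$-modules with $h_j(K_\bullet)$ flat over $R$ for $j < n$, the natural map $S \otimes_R h_i(K_\bullet) \to h_i(S \otimes_R K_\bullet)$ is an isomorphism for every $i \leq n$.

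For this I would invoke the hyperhomology spectral sequence
\[ E^2_{p,q} = \Tor^R_p(S, h_q(K_\bullet)) \Longrightarrow h_{p+q}(S \otimes_R K_\bullet), \]
with differentials $d_r \colon E^r_{p,q} \to E^r_{p-r,\, q+r-1}$; this converges since $K_\bullet$ is bounded. Fix $i \leq n$. For $p \geq 1$ one has $i - p < n$, so $h_{i-p}(K_\bullet)$ is flat and hence $E^2_{p, i-p} = 0$; thus $E^2_{0, i} = S \otimes_R h_i(K_\bullet)$ is the only nonzero piece in total degree $i$ at the $E^2$-page. The outgoing differentials from $E^r_{0, i}$ land at negative $\Tor$-index and so vanish automatically, while the incoming differentials originate at $E^r_{r, i-r+1}$ with $r \geq 2$ and $i - r + 1 < n$, which vanish by the same flatness hypothesis. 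It follows that $E^\infty_{0, i} = E^2_{0, i}$, and the resulting edge map is exactly the base-change map in question; its naturality is part of the naturality of the spectral sequence.

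The only real piece of care is bookkeeping the directions of the differentials so that the flatness of the \emph{lower} homology groups actually cuts down the spectral sequence in all total degrees through $n$. An alternative, more explicit route proceeds by induction using the cycle/boundary sequences $0 \to Z_q \to K_q \to B_{q-1} \to 0$ and $0 \to B_q \to Z_q \to h_q(K_\bullet) \to 0$ to show that the modules $B_q$ and $Z_q$ are flat over $R$ for $q < n$; the base-change isomorphism at each degree $i \leq n$ then follows from a direct five-lemma argument after applying $S \otimes_R -$.
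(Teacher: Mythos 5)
Your proposal is correct and follows essentially the same route as the paper: the paper also realises $\calm\otimes^\L_ADA$ as a bounded-below complex of flat $R$-modules and invokes the hypertor spectral sequence $E^2_{ij}=\Tor^R_i(S,h_j(K_\bullet))\Rightarrow h_{i+j}(S\otimes^\L_RK_\bullet)$, with the flatness of the lower homology groups forcing the collapse onto the edge term. Your write-up merely makes explicit the differential bookkeeping that the paper leaves to the reader.
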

\begin{proof}
From the hypertor homology spectral sequence \cite[Application~5.7.8]{W} with $E^2$ page: 
\[E^2_{ij} = \Tor^R_i(M, h_{j}(K_\bullet))\Rightarrow h_{i+j}(M\otimes^\L_RK_\bullet)\]
where $M$ is an $R$-module and $K_\bullet$ is a bounded below chain complex of free $R$-modules, we have that for
$i\leq n$
\[S\otimes_Rh_i(\calm\otimes^\L_ADA)\simeq
h_i(S\otimes^\L_R \calm\otimes^\L_ADA)
=h_i(S\otimes_R\calm\otimes^\L_ADA)\]
\end{proof}

We now modify the stack $\mathbb{X}$ by considering only $A$-modules which are stable under
$\nu_d$. To set things up properly, we need the following:

\begin{prop} \label{pXdd}
Fix dimension vectors $\vec{d}_0, \ldots, \vec{d}_n \in \mathbb{N}^{Q_0}$. Let
$\X^{\vec{d}_{\bullet}}$ be the full subcategory of $\X$ which, over a commutative noetherian ring
$R$, consists of flat families $\calm$ of $A$-modules over $R$ of dimension vector $\vec{d}$ such
that $h_i(\nu\calm)$ is a flat family of $A$-modules over $R$ with  dimension vector
$\vec{d}_i$ for all $i=0,\dots,n$.
Then $\X^{\vec{d}_{\bullet}}$ is a locally closed substack of $\X$.
\end{prop}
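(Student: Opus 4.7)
The claim is local on the base, so it suffices to fix an affine test scheme $T=\spec R$ and a flat family $\calm\in\X(R)$ and show that the condition defining $\X^{\vec{d}_{\bullet}}$ cuts out a locally closed subscheme of $\spec R$. The plan is to represent $\nu\calm$ by an explicit bounded complex of finite-rank locally free $R$-modules and then invoke a classical fact about cohomology ranks of such complexes.

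First, using the finite global dimension of $A$, I would choose a bounded projective resolution $P_{\bullet}\to DA$ of $DA$ as an $A$-bimodule (for instance by truncating the bar resolution at a sufficiently high step). Each $P_{i}$ is then in particular projective as a left $A$-module, so the complex $K_{\bullet}:=\calm\otimes_{A}P_{\bullet}$ is a bounded complex of $(R,A)$-bimodules quasi-isomorphic to $\nu\calm$, and each $K_{i}$ is locally free of finite rank over $R$ (being an $R$-summand of some $\calm^{n_{i}}$). Splitting under the vertex idempotents, each $K_{\bullet}e_{v}$ is a bounded complex of finite-rank locally free $R$-modules, and the condition characterising $\X^{\vec{d}_{\bullet}}$ becomes the requirement that for every pair $(i,v)\in\{0,\ldots,n\}\times Q_{0}$ the cohomology $h_{i}(K_{\bullet}e_{v})$ be locally free over $R$ of rank exactly $(\vec{d}_{i})_{v}$ and compatible with arbitrary base change.

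Next I would invoke the standard fact: for a bounded complex of finite-rank free modules over a noetherian ring, the locus in $\spec R$ where the cohomology in a fixed degree has prescribed constant rank is a locally closed subscheme, explicitly cut out by vanishing and non-vanishing of Fitting ideals (equivalently, minor ideals) of the differentials. On this locus the cohomology sheaves are automatically locally free of the prescribed ranks, and Lemma~\ref{lserrebchange} is precisely the input needed to ensure their formation then commutes with arbitrary base change. The finite intersection over all pairs $(i,v)\in\{0,\ldots,n\}\times Q_{0}$ of these conditions is again locally closed, and because the whole construction is compatible with pullback along $\spec S\to\spec R$ it glues to a locally closed substack $\X^{\vec{d}_{\bullet}}\hookrightarrow\X$.

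The main technical hurdle is the first step, namely pinning down a bounded bimodule resolution of $DA$ so that the right $A$-action on $\nu\calm$ is tracked cleanly through the computation; this is exactly where the finite global dimension hypothesis on $A$ is essential. Once $\nu\calm$ is represented by such a complex, the remainder is a routine application of the Fitting-ideal / flattening stratification machinery, with Lemma~\ref{lserrebchange} providing the base-change compatibility.
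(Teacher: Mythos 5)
Your overall strategy is sound and, once unwound, is essentially the paper's argument in different packaging: the paper also reduces to Fitting ideals plus the base-change statement of Lemma~\ref{lserrebchange}, but it applies Fitting ideals directly to the finitely generated $R$-modules $h_i(\nu\calm)e_v$ and runs an explicit induction on the homological degree, whereas you replace $\nu\calm$ by a perfect complex $K_\bullet e_v$ of locally free $R$-modules and stratify by minor ideals of its differentials. Two points in your write-up need tightening. First, you should not claim a bounded \emph{projective bimodule} resolution of $DA$: it is not clear that $DA$ has finite projective dimension over $A\otimes A^{op}$, and you do not need it. What you need, and what the truncated bar resolution of $DA$ (truncated at $\pd_A DA<\infty$) actually provides, is a bounded complex of bimodules each of which is projective as a \emph{left} $A$-module; that already makes each $\calm\otimes_A P_i$ a finitely generated projective $R$-module and identifies $K_\bullet\otimes_R S$ with a representative of $\nu(S\otimes_R\calm)$ for every base change, which is the only use made of the resolution.

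Second, and more seriously, your ``standard fact'' is not correct as stated for a single fixed degree. For a middle cohomology $h_i=\ker d_i/\im d_{i+1}$, prescribing the fibrewise dimension of $h_i(K_\bullet\otimes\kappa(p))$ only constrains the \emph{sum} $\rk(d_i\otimes\kappa(p))+\rk(d_{i+1}\otimes\kappa(p))$; the resulting locus is locally closed as a set (by semicontinuity), but the individual differential ranks may jump within it, and on such a locus $h_i(K_\bullet)$ need not be locally free nor compatible with base change. The statement that is true, and that you implicitly rely on when you intersect over all $(i,v)$, fixes the rank $s_j$ of every differential $d_j$ separately via its minor ideals; on that stratum each $d_j$ is locally equivalent to $\left[\begin{smallmatrix} I_{s_j} & 0 \\ 0 & 0\end{smallmatrix}\right]$, so kernels, images and cohomologies are locally free direct summands and commute with arbitrary base change. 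The catch is that the prescribed ranks $r_0,\dots,r_n$ determine the $s_j$ only by solving $r_0=\rk K_0-s_1$, then $r_1=\rk K_1-s_1-s_2$, and so on upward from degree $0$ --- i.e.\ exactly the bottom-up induction that the paper makes explicit, with Lemma~\ref{lserrebchange} guaranteeing at each stage that the next cohomology can be tested after base change. So your proof is repairable and ends up being the paper's proof; as written, the key intermediate claim would fail if taken degree by degree in isolation.
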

\begin{proof} 
We prove this by induction on $n$. Let $e_v$ be the idempotent corresponding to the vertex $v\in Q_0$.
Suppose $n=0$ and let $\mathcal{E}_v:=
h_0(\nu\calm)e_v$. Each $\mathcal{E}_v$ is a finitely generated $R$-module
and hence the locus $Z_v\subseteq \spec R$, where it is locally free of rank $d_{0,v}$, is locally closed in $R$;
in fact, it is given by the intersection of the closed condition determined by the $(d_{0,v}-1)$-st
Fitting ideal of ${\mathcal{E}_v}$ and the open condition determined by its $d_{0,v}$-th Fitting
ideal (see \cite[\href{http://stacks.math.columbia.edu/tag/07Z6}{Tag 07Z6}]{Stacks}). 
For $\X^{\vec{d}_{\bullet}}$ to be locally closed we need to show that given any
 base change $R \to S$, we have that $S\otimes_R\calm \in
\X^{\vec{d}_{\bullet}}(S)$ if and only if the map $\spec S \to \spec R$ factors through the locally
closed subscheme $\cap_v Z_v$. However, Fitting ideals commute with arbitrary base change and 
the formation of $\mathcal{E}_v$ also commutes with base change by Lemma \ref{lserrebchange}. Thus 
$h_0(\nu\calm)e_v$ is locally free of rank $d_{0,v}$ on a locally closed subscheme of $\spec R$.

Suppose now the proposition is true for $i=0,\dots,n-1$ i.e. there exists a locally closed subscheme
of $\spec R$ on which  $h_i(\nu\calm)e_v$ are locally free of rank $d_{i,v}$ for
$i=0,\dots,n-1$. By the same argument as in the base case, there exists a locally closed subscheme
of this (locally closed) subscheme on which $h_n(\nu\calm)e_v$ are locally free for all
$v\in Q_0$. 
Since  $h_i(\nu\calm)e_v$ are locally free, and hence flat,
Lemma \ref{lserrebchange} once again insures that
the formation of these modules commutes with base change. Finally, as before, Fitting ideals commute
with the base change as does the formation of subschemes.
\end{proof}

We now fix the {\em shift parameter} $d$ which will in examples be the dimension of the moduli space considered. 
Since we are interested in families $\calm$ such that $\nu_d\calm \simeq \calm$, we will need $\vec{d}$ 
to be a fixed point of the {\em (shifted)
Coxeter transformation} $\Phi: K_0(A) \to K_0(A)$ which the shifted Serre functor $\nu_d$ induces. We call such a dimension vector {\em Coxeter stable}. It is
also natural to restrict to only those families $\calm$ of $A$-modules such that
$h_i(\nu\calm)= 0$ for $i \neq d$ and $h_d(\nu\calm)$ is locally free of rank
vector $\vec{d}$. Proposition \ref{pXdd} ensures that the subcategory $\X^0 \subseteq \X$ 
of such $A$-modules is a locally closed substack.

\begin{cor} Let $s={\rm pd_A}DA <\infty$.
If $d=s-1$ or $s$ and $\vec{d}$ is Coxeter stable, then $\X^0$ is an open substack of $\X$.
\end{cor}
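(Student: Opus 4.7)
Proposition~\ref{pXdd} already gives $\X^0$ as a locally closed substack of $\X$, so the plan is to show the remaining closed condition---that $h_d(\nu\calm)$ has precisely the rank $\vec{d}$, rather than something smaller---is automatic once the open vanishings $h_i(\nu\calm)=0$ for $i\neq d$ are imposed. The essential inputs will be upper semicontinuity and Euler-characteristic constancy for perfect complexes, together with Coxeter stability of $\vec{d}$.

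I would first observe that a length-$s$ projective resolution $P_\bullet \to DA$ by finitely generated left projectives realises $\nu\calm = \calm \otimes^{\L}_A DA$ as a bounded complex $\calm \otimes_A P_\bullet$ of finitely generated, $R$-flat modules, i.e.\ a perfect complex of $R$-modules, with cohomology concentrated in degrees $[0,s]$. For such a complex, each fibre dimension $p \mapsto \dim_{k(p)} h_i(\nu\calm\otimes^{\L}_R k(p))e_v$ is upper semicontinuous on $\spec R$, and the pointwise Euler characteristic is locally constant. Moreover, the class of $\nu(\calm\otimes k(p))$ in $K_0(A)$ equals the image of $\vec{d}$ under the endomorphism induced by $\nu$, which by Coxeter stability ($\Phi(\vec{d})=\vec{d}$) and the identity $\Phi=(-1)^d[\nu]$ equals $(-1)^d\vec{d}$.

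Let $U \subseteq \spec R$ be the (open) intersection of the loci $\{p : h_i(\nu\calm\otimes^{\L}k(p))e_v = 0\}$ for $i \in \{0,\dots,s\}\setminus\{d\}$ and $v \in Q_0$. On $U$ the only non-zero fibre cohomology is $h_d$, so the Euler-characteristic identity forces $\dim h_d(\nu(\calm\otimes k(p)))e_v = d_v$ for every $p$ and $v$. I would then bootstrap from fibres to modules using Lemma~\ref{lserrebchange}: since $h_0$ is a cokernel it commutes with base change automatically, so $h_0=0$ on $U$ by Nakayama; inductively, once $h_0,\dots,h_{i-1}$ are flat, the lemma yields $h_i(\nu\calm)\otimes_R k(p) \simeq h_i(\nu\calm\otimes^{\L}k(p))$, and hence $h_i=0$ on $U$ for $i=0,1,\dots,d-1$. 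Applying the lemma once more (with $n=d$) gives that $h_d(\nu\calm)$ has constant fibre dimension $d_v$ at each vertex; being finitely presented of constant fibre rank over a Noetherian base, $h_d(\nu\calm)$ is locally free of rank $\vec{d}$ on $U$.

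In the case $d=s$ this already completes the argument, since there is no cohomology above degree $d$ to worry about. For $d=s-1$ one final application of Lemma~\ref{lserrebchange} (now legitimate, as $h_{s-1}$ has just been shown flat) gives $h_s\otimes k(p) \simeq h_s(\nu\calm\otimes^{\L}k(p)) = 0$, and Nakayama forces $h_s=0$ on $U$. Hence $U$ is exactly the locus corresponding to $\X^0$, so $\X^0$ is open. The main obstacle is precisely this bootstrap from fibrewise to module-wise vanishing, since Lemma~\ref{lserrebchange} requires the prior homologies to be flat; the hypothesis $d\in\{s-1,s\}$ keeps the induction going in a single upward sweep through the bounded range $[0,s]$, with at most one degree above $d$ to manage once $h_d$ is known flat.
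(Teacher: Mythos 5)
Your strategy is the same one the paper uses: impose the vanishing conditions as open conditions, then use Coxeter stability together with the pointwise Euler characteristic of the perfect complex $\calm\otimes_A P_\bullet$ to force $h_d$ to have fibre dimension exactly $\vec{d}$, and finally pass from fibrewise to module-level statements via Lemma~\ref{lserrebchange}. The paper organises this slightly differently --- it treats the conditions sequentially as in Proposition~\ref{pXdd}, notes that the rank-zero (vanishing) conditions for $i<s-1$ are open, and then uses the Euler-characteristic identity on the two surviving degrees $s-1,s$ to get the one-sided inequality $\dim h_d(\nu_d(\kappa\otimes\calm))\geq\vec{d}$, which makes the remaining locally closed condition open --- whereas you impose all the fibrewise vanishing loci at once by upper semicontinuity for the perfect complex and only then bootstrap upward. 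Your organisation is in fact more uniform: as written it does not really use $d\in\{s-1,s\}$, since the fibrewise vanishing loci are open for every $i$ and the induction sweeps through the whole range $[0,s]$, so you are proving a slightly stronger statement than the corollary.

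One step needs repair. The claim that a finitely presented module of constant fibre rank over a noetherian base is locally free is false without a reducedness hypothesis: $k$ over $k[\varepsilon]/(\varepsilon^2)$ has constant fibre rank $1$ but is not free. The conclusion you want is nevertheless correct here, and you already have the ingredients to prove it: having shown $h_i(\nu\calm)=0$ for $i<d$, you may split off the acyclic part of the complex in those degrees (each differential is a surjection, hence a split surjection, of finite projectives), after which $h_d(\nu\calm)$ is exhibited as the cokernel of a map of finite locally free $R$-modules whose fibrewise rank is constant; such a cokernel is locally free. Equivalently, cite the standard cohomology-and-base-change fact that a perfect complex whose fibrewise cohomology is concentrated in a single degree with constant dimension is quasi-isomorphic to a finite locally free module placed in that degree. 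With that substitution your argument is complete.
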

\begin{proof}
We need to examine the proof of Proposition \ref{pXdd} a little more carefully. For $i<s-1$, the locally
closed condition given by the vanishing of $h_i(\nu\calm)$ is open. Let us suppose that $\calm$ lies in this 
open substack.
Then for every residue field
$\kappa$ of $R$, we know that $\nu_d ( \kappa\otimes_R \calm)$ has homology only in degrees $0$ and
$1$.
Hence the dimension vector of $h_0(\nu_d(\kappa\otimes_R \calm))$ must be at least $\vec{d}$. Hence the locus where it is locally free of rank vector
$\vec{d}$ is open. Furthermore, if $d=s-1$ then on this open locus
$h_s(\kappa\otimes_R \calm\otimes^\L_ADA)=0$ since the dimension vector is fixed.
\end{proof}

We now define the shifted Serre functor in families. Let $T$ be a noetherian test scheme and $\calm \in \X^0(T)$. The functorial assignment
$\calm \mapsto h_0(\nu_d\calm)$ is compatible with base change by Lemma \ref{lserrebchange}
so it defines a morphism of stacks $\nu_d\colon \X^0\to \X$. We thus have both a diagonal map 
$\Delta\colon \X \to \X \times \X$ and a graph of $\nu_d$ morphism $\Gamma_{\nu_d}\colon \X^0 \to \X \times \X$. 
The natural definition of the moduli stack of representations fixed by the Serre functor 
is of course the fibre product 
\[ \X^S_{\vec{d}, d} = \X \times_{\X \times \X} \X^0.\]
We call this the \emph{moduli stack of Serre stable modules} or \emph{Serre stable representations} of dimension vector $\vec{d}$ and shift parameter $d$. If $\vec{d}, d$ are understood, we will drop the subscripts $\vec{d}, d$ and call this the {\em Serre stable moduli stack}. Note that $\X$ is not usually 
quasi-separated so $\Delta$ may not even be quasi-compact let alone a closed immersion. 
This gives us hope that we may be dealing with a more useful stack than $\X$. Note that given a
morphism $\calm \to \caln$ in $\X^0(T)$, we obtain a morphism $\nu_d\calm \to \nu_d\caln$ in $\X(T)$.
Also, products of Artin stacks are Artin stacks. Unravelling the definition of fibre products of stacks gives

\begin{prop} \label{pXS}
The objects of $\X^S(T)$ are equivalences classes of isomorphisms $\calm \to \call \otimes_T
\nu_d\calm$ where $\calm$ is a flat family of $A$-modules over $T$, $\call$ is a line bundle on $T$, and equivalence classes are 
defined as for $\X$ above in Subsection~\ref{subsec:rigid}. The morphisms
in $\X^S(T)$ consist of isomorphisms $\phi\colon\calm \to \calm'$  which are compatible in $\X(T)$ with
the isomorphisms $\theta\colon\calm \to \call \otimes_T \nu_d\calm,
\theta'\colon \calm' \to  \call' \otimes_T \nu_d\calm'$,  that is, there is an isomorphism of line bundles $\lambda: \call \lm \call'$ such that the diagram
$$\begin{CD}
\calm @>{\theta}>> \call \otimes_T \nu_d \calm \\
  @V{\phi}VV  @VV{\lambda\otimes_T \nu_d \phi}V \\
\calm' @>{\theta'}>> \call' \otimes_T \nu_d \calm' 
\end{CD}$$
commutes up to a scalar in $\calo_T^{\times}$. Furthermore, $\X^S$ is an Artin stack of finite type over $k$. 
\end{prop}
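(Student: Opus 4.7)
The strategy is to unwind the definition of the 2-fibre product $\X \times_{\X \times \X} \X^0$ and match the resulting data against the description in the proposition. My first step is to note that, by definition, a $T$-point of this fibre product is a triple $(\calm, \caln, \iota)$ where $\calm \in \X(T)$, $\caln \in \X^0(T)$, and $\iota$ is an isomorphism in $(\X \times \X)(T)$ from $\Delta(\calm) = (\calm, \calm)$ to $\Gamma_{\nu_d}(\caln) = (\caln, \nu_d\caln)$. I would then unpack $\iota$ as a pair $(\iota_1, \iota_2)$ of isomorphisms in $\X(T)$, with $\iota_1\colon \calm \to \caln$ and $\iota_2\colon \calm \to \nu_d \caln$, each represented (by the discussion of Subsection~\ref{subsec:rigid}) by an equivalence class of honest $\calo_T \otimes A$-module isomorphisms up to a line-bundle twist.

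Next, I would reduce to a simpler form. Using $\iota_1$ itself one constructs a canonical isomorphism of fibre-product objects from $(\calm, \caln, (\iota_1, \iota_2))$ to $(\calm, \calm, (\id_{\calm}, \iota_2 \circ \iota_1^{-1}))$, so every object is equivalent to one of this normalized shape. The essential datum is then the single morphism $\theta = \iota_2 \circ \iota_1^{-1}\colon \calm \to \nu_d \calm$ in $\X(T)$, i.e.\ an equivalence class of isomorphisms $\calm \to \call \otimes_T \nu_d\calm$ for some line bundle $\call$ on $T$, which is exactly the description of objects asserted in the proposition. For morphisms the procedure is analogous: the fibre-product structure prescribes a pair of morphisms $(\calm,\calm)\to(\calm',\calm')$ in $\X(T)\times\X(T)$ compatible with the $\iota$'s, and after the normalization the two components must coincide, giving a single $\phi\colon \calm \to \calm'$, while compatibility in the second component yields the commuting square in the proposition statement. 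The ``up to a scalar in $\calo_T^{\times}$'' clause then falls out as the inherent ambiguity of morphisms in $\X(T)$, which are defined only up to rescaling by line-bundle isomorphisms.

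Finally, the Artin and finite-type claims follow from general principles together with the earlier description of $\X$ and $\X^0$. Since 2-fibre products of Artin stacks along arbitrary 1-morphisms are again Artin stacks, and both $\X \simeq [\calr/PG]$ and the locally closed substack $\X^0 \subseteq \X$ (Proposition~\ref{pXdd}) are Artin of finite type over $k$ (because $\calr$ is a closed subscheme of the affine space $\mathbb{A}^Q$ and $PG$ is a linear algebraic group), the fibre product $\X^S = \X \times_{\X \times \X} \X^0$ is likewise Artin of finite type. The main obstacle I anticipate is organizational rather than conceptual: one must carefully track the line-bundle twists hidden inside each equivalence class as the two factors of the fibre product are glued, and it is precisely this bookkeeping that produces the ``up to scalar'' qualification on the compatibility diagram.
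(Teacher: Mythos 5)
Your proposal is correct and follows essentially the same route the paper intends: the paper gives no separate proof, stating only that the proposition follows by ``unravelling the definition of fibre products of stacks'' together with the observation that products (and hence fibre products) of Artin stacks are Artin, which is exactly the unwinding and normalization you carry out. One small notational point: after transporting $\caln$ to $\calm$ along $\iota_1$, the normalized second component is $\nu_d(\iota_1)^{-1}\circ\iota_2\colon\calm\to\nu_d\calm$ rather than $\iota_2\circ\iota_1^{-1}$ (which does not typecheck as a map $\calm\to\nu_d\calm$), but this does not affect the substance of the argument.
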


\begin{rem}
	Suppose that we have a flat family of $A$-modules $\calm \in \X(T)$ and an isomorphism
$\theta\colon \calm \to \call \otimes_T \nu_d\calm$
representing an object of $\X^S(T)$. Given an automorphism $\psi\colon \calm \to \calm$, 
we obtain a new object $(\call \otimes_T
\nu_d\psi^{-1}) \theta \psi\colon \calm \to \call \otimes_T \nu_d\calm$ which is isomorphic 
to $\theta$ in $\X^S(T)$. It will frequently be useful to pass to such an isomorphic family.
\end{rem}

\section{The $k$-points of $\X^S$}  \label{sec:kpoints}

In this section, we study the category of $k$-points $\X^S(k)$ of the Serre stable moduli stack $\X^S$. This will not only elucidate the stacky structure, but will also be invaluable for invoking Bridgeland-King-Reid theory in our goal of finding module-theoretic characterisations of endomorphism algebras of tilting bundles on smooth projective stacks. We also introduce the notion of regular semisimplicity which plays an important role in our criterion for tilting proved in Section~\ref{sTistilting}. 

We continue the notation of Section~\ref{sec:serrestable}. In particular, we will have fixed a dimension vector $\vec{d} \in K_0(A)$ and shift parameter $d$ and let $\X^S$ denote the corresponding Serre stable moduli stack. 
By Proposition~\ref{pXS}, an object of $\X^S(k)$ consists of a Serre stable module $M$ of dimension vector $\vec{d}$ together with an isomorphism $\theta: M \lm \nu_d M$. A priori, the object depends on $\theta$, but the next result gives a sufficient criterion for this not to be the case. This allows us to think of the $k$-points of $\X^S$ as being parametrised by the Serre stable modules themselves in this case. 

Recall that $K_0(A)$ has a natural partial ordering $\leq$ where modules induce non-negative elements. We say $\vec{d}\in K_0(A)$ is {\em minimal Coxeter stable} if $\vec{d}$ is Coxeter stable but the only other Coxeter stable $\vec{c} \in K_0(A)$ with $\vec{0} \leq \vec{c} \leq \vec{d}$ is $\vec{c} =  \vec{0}$.

\begin{prop}  \label{pisDM} 
Let $\vec{d}$ be a minimal Coxeter stable dimension vector and $M$ be a Serre stable module with dimension vector $\vec{d}$. Suppose that $\End_A M$ is a semisimple $k$-algebra with say $n$ Wedderburn components. 
\begin{enumerate}
\item $\End_A M \simeq k^n$ and the indecomposable summands $M_1,\ldots, M_n$ of $M$ form a single $\nu_d$-orbit. 
\item Any two isomorphisms $\theta: M \lm \nu_d M, \theta': M \lm \nu_d M$ define isomorphic objects in $\X^S(k)$. 
\item Let $\theta: M \lm \nu_d M$ be an isomorphism so $(M,\theta)$ defines an object of $\X^S(k)$. Then the automorphism group of $(M,\theta)$ in $\X^S(k)$ is $\mu_n$. 
\end{enumerate}
In particular, if every Serre stable module with dimension vector $\vec{d}$ has semisimple endomorphism ring, then $\X^S$ is a Deligne-Mumford stack whose $k$-points are parametrised by Serre stable modules.
\end{prop}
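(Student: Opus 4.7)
My plan is to prove (i) first so that the subsequent analysis of morphisms reduces to a transparent scalar calculation indexed by a cyclic permutation, then handle (ii) and (iii) by direct scalar bookkeeping, and finally deduce the Deligne--Mumford conclusion from (ii) and (iii).

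For (i), I would start with the Krull--Schmidt decomposition $M = \bigoplus_{i=1}^{r} M_i^{m_i}$ into pairwise non-isomorphic indecomposables. The Wedderburn hypothesis together with $k$ being algebraically closed forces each division algebra factor of $\End_A M$ to be $k$, so each $M_i$ is a brick and $r = n$. Since $\nu_d$ is an autoequivalence of $\D^b(\Amod)$ and $\nu_d M \cong M$ is concentrated in degree zero, each $\nu_d M_i$ is also concentrated in degree zero (being a summand in the derived category of something concentrated in degree zero), is indecomposable, and matches some $M_{\sigma(i)}$ with $m_i = m_{\sigma(i)}$. Each $\sigma$-orbit $O$ then contributes a Coxeter stable class $\vec{d}_O = \sum_{i \in O} [M_i]$ with $0 \le \vec{d}_O \le \vec{d}$. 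Minimal Coxeter stability forces $\vec{d}_O = \vec{d}$ for every non-empty orbit, and then $\sum_O m_O \vec{d} = \vec{d}$ with $m_O \ge 1$ forces a single orbit and $m_O = 1$. This yields $\End_A M \cong k^n$ together with the cyclic $\nu_d$-action on $M_1, \ldots, M_n$.

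For (ii) and (iii), I would fix reference isomorphisms $f_i\colon M_i \to \nu_d M_{\sigma^{-1}(i)}$; because the $M_i$ are non-isomorphic bricks, every isomorphism $\theta\colon M \to \nu_d M$ is encoded by a tuple $\vec{a} = (a_i) \in (k^\times)^n$, and every $\phi \in \Aut_A M = (k^\times)^n$ by a tuple $\vec{\mu}$, with $\nu_d(\phi)$ acting on $\nu_d M$ as $(\mu_{\sigma^{-1}(i)})$. The compatibility diagram of Proposition~\ref{pXS} then reads $a_i' \mu_i = \lambda\, a_i \mu_{\sigma^{-1}(i)}$. For (ii), walking once around the $n$-cycle yields the single obstruction $\lambda^n = \prod_i a_i/a_i'$, which is solvable since $k$ is algebraically closed; a choice of $\lambda$ then determines $\vec{\mu}$ by recursion up to an overall scalar, providing the required isomorphism in $\X^S(k)$. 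For (iii), specialising $\theta = \theta'$ collapses the recursion to $\mu_i = \lambda \mu_{\sigma^{-1}(i)}$, so $\lambda^n = 1$ and each $\lambda \in \mu_n$ determines $\vec{\mu}$ up to a scalar; factoring out the $k^\times$-equivalence on morphisms in $\X(k)$ from Subsection~\ref{subsec:rigid} leaves the automorphism group equal to $\mu_n$.

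For the final assertion, the semisimplicity hypothesis applies pointwise, so (ii) gives a bijection between $|\X^S(k)|$ and isomorphism classes of Serre stable modules of dimension vector $\vec{d}$, and (iii) gives every $k$-point a finite stabiliser $\mu_n$, which is \'etale since $\mathrm{char}\, k = 0$. As $\X^S$ is an Artin stack of finite type over $k$ by Proposition~\ref{pXS}, finite and reduced stabilisers at all $k$-points suffice to conclude it is Deligne--Mumford. The main obstacle I anticipate is the bookkeeping in (ii) and (iii): one has to track carefully how $\nu_d(\phi)$ permutes the summands of $\nu_d M$ under the chosen identifications, and distinguish between strict commutativity of the diagram and commutativity only up to the $k^\times$ ambiguity built into the morphisms of $\X^S$; once this is cleanly set up, the minimality argument in (i) and the cyclic iteration in (ii)/(iii) finish the proof.
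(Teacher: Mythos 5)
Your proposal is correct and follows essentially the same route as the paper's proof: decompose $M$ into indecomposables, use minimality of $\vec{d}$ to force a single $\nu_d$-orbit with multiplicity one (giving $\End_A M \simeq k^n$), and then reduce (ii) and (iii) to the scalar computation for the cyclic permutation action on $(k^{\times})^n$, concluding Deligne--Mumford from finite reduced stabilisers. If anything, your treatment of (i) is slightly more careful than the paper's, since you explicitly rule out repeated summands and multiple orbits via the Coxeter stable classes $\vec{d}_O$, a step the paper compresses into a single sentence.
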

\begin{proof}
	The Deligne-Mumford criterion \cite[Th\'eor\`eme~8.1]{LM-B} will show that $\X^S$ is a
	Deligne-Mumford stack once we have proven parts (i), (ii) and (iii). Consider an object $\theta\colon M \xrightarrow{\sim} \nu_d M$ of $\X^S(k)$. Let $M =\oplus_{i=1}^n M_i$ be the decomposition into indecomposable summands. The Serre
functor is additive so $\nu_d M_i$ is a module, which given the isomorphism $\theta$ must be
isomorphic to one of the $M_j$'s. Minimality of $\vec{d}$ ensures that the $M_i$ are non-isomorphic and form a
single $\nu_d$-orbit so we can re-order them so that $M_{i+1} \simeq \nu_d M_i$.  Hence $\End_A M = k^n$ and the group of $A$-module automorphisms of $M$ is $(k^{\times})^n$. Part (i) is proved. 

We now prove (ii). If $\lambda = (\lambda_1,\ldots,\lambda_n) \in k^n$
defines an endomorphism of $M$ then $\nu_d\lambda = (\lambda_n,\lambda_1,\ldots,\lambda_{n-1})$. 
An easy computation now shows that as $\lambda$ ranges over $(k^{\times})^n$, $(\nu_d\lambda) \lambda^{-1}$ ranges over all $(\beta_1,\ldots,\beta_n) \in (k^{\times})^n$ with $\beta_1\beta_2 \ldots \beta_n = 1$. Up to scalar, this covers all of $(k^{\times})^n$ so any other
isomorphism $\theta': M \xrightarrow{\sim} \nu_d M$ gives an object isomorphic to $\theta \colon M \to \nu_d M$.

Finally, to prove (iii), note that $\nu_d\colon k^n \to k^n$ has $n$
$1$-dimensional eigenspaces, the eigenvalues being the elements of $\mu_n$. Each eigenspace gives a 
unique automorphism of the object $\theta\colon M
\simeq \nu_d M$ in $\X^S(k)$ and there are no other automorphisms. Composition of automorphisms corresponds to multiplication of eigenvalues so we are done. 
\end{proof}

For canonical algebras, we will be interested in the dimension vector $\vec{d}= \mathbf{1}$, where all components are 1. The hypotheses of the previous proposition are easily fulfilled in this case. 
\begin{prop}  \label{ponevector} 
Any module $M$ with dimension vector $\mathbf{1}$ has endomorphism ring $\End_A M \simeq k^n$ where $n$ is the number of indecomposable summands of $M$. 
\end{prop}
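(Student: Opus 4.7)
The plan is to exploit the $\mathbf{1}$-dimensional vector spaces at each vertex to reduce $\End_A M$ to a subalgebra of $k^{Q_0}$, and then to use Krull--Schmidt together with a support argument.

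First I would observe that since $\dim_k Me_v = 1$ for every $v \in Q_0$, any $A$-module endomorphism $\varphi$ of $M$ acts on $Me_v$ by a scalar $\lambda_v \in k$. So the map $\varphi \mapsto (\lambda_v)_{v \in Q_0}$ realises $\End_A M$ as a unital subalgebra of $k^{Q_0}$. In particular $\End_A M$ is commutative. Moreover, since $k^{Q_0}$ contains no nonzero nilpotents, neither does any subalgebra, so the Jacobson radical of $\End_A M$ is zero; being finite-dimensional, commutative and semisimple over the algebraically closed field $k$, Wedderburn gives $\End_A M \simeq k^m$ for some $m$.

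Next I would identify $m$ with the number $n$ of indecomposable summands. By Krull--Schmidt, write $M = \bigoplus_{i=1}^n M_i$ with each $\End_A M_i$ local. Applying the first step to each $M_i$ separately shows $\End_A M_i$ is both local and a product of copies of $k$, hence $\End_A M_i = k$. The dimension vector of each $M_i$ is the indicator function $\mathbf{1}_{S_i}$ of some subset $S_i \subseteq Q_0$, and since the $\mathbf{1}_{S_i}$ sum to $\mathbf{1}$, the subsets $S_i$ partition $Q_0$ and in particular are pairwise disjoint. For $i \neq j$, any homomorphism $M_i \to M_j$ is given on each vertex by a linear map between $Me_v$'s, one of which is zero, so $\Hom_A(M_i, M_j) = 0$. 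Consequently
\[
\End_A M \;=\; \bigoplus_{i,j} \Hom_A(M_j, M_i) \;=\; \prod_{i=1}^n \End_A M_i \;\simeq\; k^n,
\]
as required.

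There is no serious obstacle: the only point requiring any care is the observation that a unital subalgebra of $k^{Q_0}$ is automatically a product of copies of $k$, and the disjoint-support argument that kills cross-Homs between indecomposable summands. Both are immediate once one notes that on a one-dimensional space at each vertex all module maps are just collections of scalars.
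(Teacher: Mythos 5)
Your proof is correct and follows essentially the same route as the paper: decompose $M$ into indecomposables, observe that the supports of the summands are disjoint so all cross-Homs vanish, and reduce to showing $\End_A M_i = k$. The only cosmetic difference is in that last step: the paper shows directly that any endomorphism of $M_i$ surjects onto a direct summand (hence is $0$ or an isomorphism), whereas you embed $\End_A M_i$ into $k^{S_i}$ and combine reducedness with the locality guaranteed by Krull--Schmidt --- both hinge on the same observation that an endomorphism acts by a scalar on each one-dimensional vertex space.
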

\begin{proof}
Consider the decomposition into indecomposables $M = \oplus_i M_i$. The choice of dimension vector means that $\Hom_A(M_i,M_j) = 0$ if $j \neq i$. Furthermore, given any endomorphism $f\colon M_i \to M_i$ let $N = \im f$. Looking at each vertex, one sees easily that the composite $N \hookrightarrow
M_i \xrightarrow{f} N$ is an isomorphism since $\vec{d}$ only has entries 1. Thus $f$ surjects onto some direct summand and so must either be 0 or an isomorphism.
\end{proof}

One of our goals is to see if we can recognise endomorphism algebras of a tilting bundles on a smooth projective stacks. Such an algebra $A$ has finite global dimension, so we shall assume this for the rest of this section. One case that has been studied in some detail are Lenzing-Meltzer's \cite{LM} {\em concealed-canonical algebras} which are precisely the  endomorphism algebras of tilting bundles on weighted projective lines $\Y$.

For our study of concealed canonical algebras, we need the bilinear {\em Euler form} on $K_0(A)$, namely
$$ \langle [M], [N] \rangle = \sum_i (-1)^i \dim \Ext^i_A(M,N) .$$
Suppose now that $d=1$ and $\vec{d}$ is Coxeter stable. Then by Serre duality for $\D^b(\Amod)$, we have $\langle \vec{d}, - \rangle = - \langle -,\vec{d} \rangle$. In particular, $\langle \vec{d}, \vec{d} \rangle = 0$ which suggests that $\langle \vec{d}, - \rangle: K_0(A) \lm \Z$ is a natural stability condition to use. This motivates
\begin{defn}
With the above hypotheses, we say that an $A$-module $M$ is {\em regular simple} if $\langle \vec{d}, [M] \rangle = 0$ but $\langle \vec{d}, [N] \rangle < 0$ for every proper submodule $N$ of $M$. We say that an $A$-module is {\em regular semisimple} if it is a direct sum of regular simples. 
\end{defn}

If $A$ is basic, tame hereditary, that is, the quiver algebra of an extended Dynkin diagram, then the Coxeter stable vectors are all multiples of the imaginary root $\vec{\delta}$ and setting $\vec{d} = \vec{\delta}$, the above definition coincides with the usual definition of regular simple. 

We are interested in regular semisimplicity because of the following standard orthogonality result from stability theory which is easily proven. It will be used to apply Bridgeland-King-Reid theory. 

\begin{prop}  \label{pendreg} 
Let $N,N'$ be two non-isomorphic regular simple modules. Then $\Hom_A(N,N') = 0$ and $\End_A N = k$. In particular, if $M$ is regular semisimple, then $\End_A M$ is semisimple too. 
\end{prop}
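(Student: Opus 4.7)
The plan is to run a Schur-style argument in which the Euler pairing $\langle \vec{d}, -\rangle$ plays the role usually played by the slope in King-style stability theory. Given a nonzero morphism $\phi\colon N \to N'$ of regular simples, I would set $K = \ker \phi$ and $I = \im \phi$, regarded as a submodule of $N'$. The short exact sequence $0 \to K \to N \to I \to 0$ gives the additive decomposition
\[
0 = \langle \vec{d}, [N]\rangle = \langle \vec{d}, [K]\rangle + \langle \vec{d}, [I]\rangle.
\]

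Next I would extract information about each summand from the regular-simplicity hypothesis (reading ``proper submodule'' as nonzero proper submodule, as is standard for stability conditions). Since $\phi \neq 0$, the image $I$ is a nonzero submodule of $N'$, so $\langle \vec{d}, [I]\rangle \le 0$ with equality forcing $I = N'$ (otherwise $I$ would be a nonzero proper submodule of the regular simple $N'$). Similarly $K$ is a proper submodule of $N$, and if $K \neq 0$ then $\langle \vec{d}, [K]\rangle < 0$. For the two numbers to sum to zero we must therefore have $K = 0$ and $I = N'$, i.e.\ $\phi$ is an isomorphism. This yields both consequences at once: if $N \not\simeq N'$ then $\Hom_A(N,N') = 0$, and every nonzero element of $\End_A N$ is invertible, making $\End_A N$ a finite-dimensional division algebra over the algebraically closed field $k$, hence equal to $k$.

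For the final assertion, I would decompose a regular semisimple $M$ as $M \simeq \bigoplus_i M_i^{\oplus n_i}$ with the $M_i$ pairwise non-isomorphic regular simples. Using the orthogonality and the endomorphism computation just established, the block decomposition of $\End_A M$ collapses to $\prod_i \End_A(M_i^{\oplus n_i}) = \prod_i M_{n_i}(k)$, a product of matrix algebras over $k$ and in particular semisimple.

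There is no real obstacle here; the mildly delicate point is simply that ``proper submodule'' in the definition of regular simple has to be read as ``nonzero proper submodule'' for the strict inequality to hold (the zero submodule itself pairs to $0$), and one has to make sure both the kernel/image halves of the Schur argument are handled with the correct (strict vs.\ non-strict) inequality according to whether the submodule in question is the zero module or equals the ambient simple.
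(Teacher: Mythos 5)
Your argument is correct and is precisely the standard Schur/stability argument the paper has in mind; the paper itself omits the proof, remarking only that this is a ``standard orthogonality result from stability theory which is easily proven.'' Your reading of ``proper submodule'' as ``nonzero proper submodule,'' the additivity of $\langle \vec{d}, -\rangle$ on the sequence $0 \to K \to N \to I \to 0$, and the use of $k$ algebraically closed to conclude $\End_A N = k$ are all exactly what is needed.
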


Regular simple modules arise naturally in the following context.

\begin{prop}  \label{prop:kpregsimple}
 Let $\calt= \oplus_{v \in Q_0} \calt_v$ be a tilting bundle on a weighted projective line $\Y$ where the $\calt_v$ are non-isomorphic indecomposable summands. Let $A = \End_{\Y} \calt$ be the associated concealed canonical algebra whose quiver has vertices $Q_0$ and $\vec{d}$ be the dimension vector of $\calt$. For any simple sheaf $S$ on $\Y$, the $A$-module $M = \Rhom_{\Y}(\calt,S)$ is regular simple. 
\end{prop}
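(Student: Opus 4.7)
The plan is to push everything through the derived equivalence $F := \Rhom_\Y(\calt,-) \colon \D^b(\coh\Y) \xrightarrow{\sim} \D^b(\mod A)$ and argue geometrically. First I would identify $\vec d$ on the geometric side: for an ordinary (non-stacky) point $p \in \Y$, $\dim_k\Hom_\Y(\calt_v,\calo_p) = \rk\calt_v = d_v$, so $\vec d \in K_0(A)$ coincides with $[F(\calo_p)]$ and is matched under $F$ with $[\calo_p] \in K_0(\Y)$. I would also check that $M$ is concentrated in degree zero: since $S$ is torsion and $\calt\otimes\omega_\Y$ is locally free, Serre duality gives $\Ext^1_\Y(\calt,S) \cong \Hom_\Y(S,\calt\otimes\omega_\Y)^{\vee} = 0$, so $M = \Hom_\Y(\calt,S)$ really is a module.

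Next I would prove the numerical identity $\langle [\calo_p], [X]\rangle_\Y = -\rk X$ for every $X \in \D^b(\coh\Y)$. By $K_0$-linearity it suffices to check it on line bundles (Serre duality gives $\Hom = 0$ and $\dim \Ext^1 = 1$, so $\chi = -1$) and on torsion sheaves (Hom and $\Ext^1$ have equal dimension on a smooth curve, so $\chi = 0$); the same calculation works at stacky points with the orbifold Serre functor. Since $F$ preserves Euler forms, this immediately gives $\langle \vec d,[M]\rangle_A = \langle [\calo_p],[S]\rangle_\Y = -\rk S = 0$, the first half of regular simplicity.

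For the second half, fix a proper nonzero submodule $N \subset M$ and let $\tilde N := F^{-1}(N) \in \D^b(\coh\Y)$. The inclusion $N \hookrightarrow M$ becomes a monomorphism in the tilted heart, whose associated torsion pair on $\coh\Y$ has torsion class $\{G : \Ext^1_\Y(\calt,G)=0\}$ and torsion-free class $\{G : \Hom_\Y(\calt,G)=0\}$. Every torsion coherent sheaf lies in the torsion class (by the Serre duality argument above), and every member of the torsion-free class is torsion-free as a sheaf, because any torsion subsheaf admits nonzero maps from $\calt$. The main step is to take the long exact sequence of coherent cohomology for the distinguished triangle $\tilde N \to S \to \widetilde{M/N} \to \tilde N[1]$: because $S$ lives in degree $0$ and both $\tilde N, \widetilde{M/N}$ are supported in degrees $-1,0$ with $H^{-1}$ in the torsion-free class and $H^0$ in the torsion class, this forces $H^{-1}(\tilde N)=0$ together with the exact sequence
\[ 0 \to H^{-1}(\widetilde{M/N}) \to H^0(\tilde N) \to S \to H^0(\widetilde{M/N}) \to 0 \]
in $\coh \Y$.

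The image of $H^0(\tilde N) \to S$ is a subobject of the simple $S$, hence either $0$ or $S$. It cannot be $0$, for then $H^0(\tilde N) \cong H^{-1}(\widetilde{M/N})$ would sit in the intersection of the torsion and torsion-free classes, which is zero, forcing $\tilde N = 0$ contrary to $N \neq 0$. Hence $\tilde N$ is a sheaf in the torsion class surjecting onto $S$ with kernel $K := H^{-1}(\widetilde{M/N})$ in the torsion-free class, and $K \neq 0$ since otherwise $\tilde N \cong S$ would give $N = M$. As $K$ is a nonzero torsion-free sheaf on the curve $\Y$, $\rk K > 0$, so $\rk \tilde N = \rk K > 0$ and $\langle \vec d, [N]\rangle_A = -\rk\tilde N < 0$. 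The main technical subtlety is the tilted-heart cohomology analysis; once the LES pins $\tilde N$ down to a single sheaf in the torsion class, the conclusion is forced by the dichotomy of the torsion pair and the simplicity of $S$.
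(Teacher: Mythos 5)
Your proof is correct, and while it shares the paper's overall architecture (transport everything through the derived equivalence and identify $\langle \vec d, -\rangle$ with $-\rk$ on the sheaf side), the key step on submodules is handled by a genuinely different mechanism. The paper reduces to an \emph{indecomposable} submodule $M'$, uses hereditariness to write $F^{-1}(M')$ as a shift $F[i]$ of a single indecomposable sheaf, pins down $i=0$ from $\Hom_{\D^b}(F[i],S)\neq 0$, and then splits into cases: if $F$ is a bundle one gets $\langle \vec d,[M']\rangle = -\rk F<0$ directly, and if $F$ is finite length one rules out $M'\hookrightarrow M$ by applying $\Rhom_\Y(\calt,-)$ to $0\to\ker\iota\to F\to S\to 0$. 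You instead keep $N$ arbitrary and exploit the torsion pair $(\calt\text{-acyclic},\,\Hom(\calt,-)=0)$ underlying the tilt: the long exact sequence of the triangle $\tilde N\to S\to\widetilde{M/N}$ forces $\tilde N$ to be a sheaf in the torsion class surjecting onto the simple $S$ with nonzero torsion-free kernel, whence $\rk\tilde N>0$ and $\langle\vec d,[N]\rangle<0$. Your route avoids both the reduction to indecomposables and the case analysis, at the cost of setting up the tilted-heart formalism; the paper's is shorter but leaves the reader to supply the (easy) reduction ``we may assume $M'$ is indecomposable'' and the verification that a finite-length $F$ cannot give a proper submodule. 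One small presentational point: your explicit verification of the identity $\langle[\calo_p],[X]\rangle=-\rk X$ on generators of $K_0(\Y)$ is cleaner than the paper's implicit use of disjoint supports plus the bundle case, and is worth keeping.
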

\begin{proof}
 Let $p$ be a generic point of the scheme locus of $\Y$ so $N = \Rhom_{\Y}(\calt, k(p))$ is an $A$-module with dimension vector $\vec{d}$. We may assume that $S, k(p)$ have disjoint support so the derived equivalence $\Rhom_{\Y}(\calt,-)$ shows that $\langle \vec{d}, [M] \rangle = 0$. 
 Let $M'$ be a proper submodule $M$ with $\langle \vec{d} , [M'] \rangle \geq 0$. We may assume that $M'$ is indecomposable. Let $S' = M' \otimes^{\L}_{A} \calt$. Now $\coh \Y$ is hereditary so $S' = F[i]$ for some indecomposable sheaf $F$ and $i\geq 0$. Also, $(-) \otimes^{\L}_A \calt$ is a derived equivalence so the inclusion $M' \hookrightarrow M$ shows that $0 \neq \Hom_{\D^b}(F[i], S) = \Ext^{-i}_{\Y}(F,S)$. Thus $i=0$. If $F$ is a bundle, then $\Ext^1_{\Y}(F, k(p)) = 0$ so $\langle \vec{d} , [M'] \rangle < 0$. If $F$ is a finite sheaf, then applying $\Rhom_{\Y}(\calt,-)$ to the exact sequence 
$$ 0 \lm \ker \iota \lm F \xrightarrow{\iota} S \lm 0 $$
we see that $M'$ is not a submodule of $M$. 
\end{proof}

\section{Canonical algebras} \label{scanonical} 

In this section, we will look at Ringel's \cite{R} canonical algebras $A$ afresh, starting from the generators and relations description. In other words, we will pretend that the algebra was given to us ``at random'' and not assume we know for example that it is the endomorphism ring of a tilting bundle on a weighted projective line. We follow instead the non-commutative algebro-geometric theme announced in the introduction, and consider studying $A$ through moduli spaces. In particular, we will compute the Serre stable moduli stack $\X^S$ for a natural choice of dimension vector $\vec{d}$ and show it is a weighted projective line. 

Let $A$ be the {\em canonical algebra} given by the following quiver $Q=(Q_0, Q_1)$
\[\xymatrix{
		&\x_1\ar[r]^{2x_1}&2\x_2\ar[r]^{3x_1}&\dots&\dots\ar[r]&
		(p_1-1)\x_{1}\ar[ddr]\\
		&\x_2\ar[r]_{2x_2}&2\x_2\ar[r]_{3x_2}&\dots&\dots\ar[r]
		&(p_2-1)\x_2\ar[dr]\\
		0\ar[uur]^{x_1}\ar[ur]_{x_2}\ar[dr]^{x_n}&\vdots&\vdots&\vdots&\vdots&\vdots&\c\\
		&\x_n\ar[r]^{2x_n}&2\x_n\ar[r]^{3x_n}&\dots&\dots\ar[r]&(p_n-1)\x_n
		\ar[ur]
	}\] 
	where  $n\geq 2, p_i\geq 1$, with relations
	\[x_i^{p_i}=x_1^{p_1}+\lambda_i x_2^{p_2}, \quad \text{for } i\ge 3, \lambda_i \neq 0.\] 
If $\Y$ is the weighted projective line with weight $p_1$ at 0, $p_2$ at $\infty$ and $p_i$ at
$\lambda_i$ for $i\ge 3$, then Geigle and Lenzing show in \cite{GL} that there is a derived equivalence
$\D^b(\coh\Y)\simeq \D^b(\Amod)$. We will independently reprove this result in Section~\ref{sTistilting}. 

The {\em tubular} canonical algebras are the ones whose weights satisfy $\sum_i (1 - \frac{1}{p_i}) = 2$. These are the ones for which $\nu_1^e \simeq \id$ for some positive integer $e$ (or equivalently, $\omega_{\Y}^{\otimes e} \simeq \calo_{\Y}$).  

\begin{prop}  \label{poneismin}
$\vec{d} = \mathbf{1}$ is a  minimal Coxeter stable dimension vector (for the shift parameter $d=1$). Furthermore, it is unique if $A$ is not tubular. 
\end{prop}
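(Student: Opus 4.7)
The plan is to reduce the proposition to a linear-algebra calculation on $K_0(A)$. First I would verify that $\mathbf{1}$ is Coxeter stable via a direct computation on $K_0$. Since $\c$ is a sink and $0$ is a source of $Q$, we have $P_\c = S_\c$ and $I_0 = S_0$. Inspection of the quiver shows $[P_0] = \mathbf{1} + [S_\c]$ (the extra $[S_\c]$ reflects that $\dim e_0 A e_\c = 2$, owing to the $n-2$ canonical relations among the $n$ paths of the form $x_i^{p_i}$) and, dually, $[I_\c] = \mathbf{1} + [S_0]$. Since $\nu P_v = I_v$ and $\nu_1 = \nu \circ [-1]$, we have $\Phi[P_v] = -[I_v]$ in $K_0(A)$, whence
\[
\Phi(\mathbf{1}) \;=\; \Phi\bigl([P_0] - [P_\c]\bigr) \;=\; -[I_0] + [I_\c] \;=\; -[S_0] + \bigl(\mathbf{1} + [S_0]\bigr) \;=\; \mathbf{1}.
\]

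For minimality I would use the general reformulation (an immediate consequence of Serre duality and the non-degeneracy of the Euler form on an algebra of finite global dimension) that, for shift parameter $d = 1$, a class in $K_0(A) \otimes \mathbb{Q}$ is fixed by $\Phi$ if and only if it lies in the radical of the symmetrized Euler form $(x,y) := \langle x,y\rangle + \langle y,x\rangle$. Computing $\Ext^*$ between simples directly from the quiver with relations, the nonzero pairings are $(S_v, S_v) = 2$, $(S_v, S_w) = -1$ whenever $v,w$ are joined by an arrow, and $(S_0, S_\c) = n - 2$ coming from the $n-2$ quadratic relations. Write $a, b$ for the $0$- and $\c$-components of a radical vector, and $y_i^{(j)}$ for its component at $j\x_i$. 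The $p_i - 1$ equations at the arm vertices $j\x_i$ are discrete Laplacians and force $(y_i^{(j)})_j$ to be harmonic with boundary values $a, b$, giving $y_i^{(j)} = ((p_i - j)a + jb)/p_i$. Substituting into the two remaining equations at $0$ and $\c$, both collapse to the single condition
\[
(a - b)\Bigl((n - 2) - \sum_{i=1}^{n} \tfrac{1}{p_i}\Bigr) \;=\; 0.
\]

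Case analysis finishes things off. In the non-tubular case the second factor is nonzero, forcing $a = b$ and hence all components equal: the rational kernel is one-dimensional, its integer points form $\mathbb{Z}\cdot\mathbf{1}$, so both the minimality of $\mathbf{1}$ and its uniqueness as a minimal Coxeter stable dimension vector are immediate. In the tubular case the kernel becomes two-dimensional, but any integer solution still requires $p_i \mid (b - a)$ for every $i$; under the additional constraint that the vector lies in $[\vec{0}, \mathbf{1}]$ componentwise, $a, b \in \{0, 1\}$ and so $|b - a| < p_i$, again forcing $a = b$ and leaving only $\vec{0}$ and $\mathbf{1}$. Thus $\mathbf{1}$ is minimal in either case, while uniqueness fails precisely when $A$ is tubular (where other, incomparable minimal Coxeter stable vectors exist). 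The argument is elementary; the only point requiring care is the symmetric bookkeeping at the two boundary vertices $0$ and $\c$, and no serious obstacle arises.
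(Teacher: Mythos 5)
Your proof is correct, and its skeleton --- reduce everything to linear algebra on $K_0(A)$, verify $\Phi(\mathbf{1})=\mathbf{1}$, and identify the fixed space of the Coxeter transformation --- is the same as the paper's, which likewise computes $\Phi$ from $\nu P_{\vec{x}} = I_{\vec{x}}$ and checks that the eigenvalue-$1$ eigenspace is spanned by $\mathbf{1}$ in the non-tubular case and is $2$-dimensional in the tubular case. Where you genuinely diverge is in establishing minimality in the tubular case. The paper argues indirectly: a failure of minimality would produce a $0$--$1$ valued vector in $E_1$, forcing every element of $E_1$ to take at most two distinct coordinate values, and this is refuted by exhibiting, for each tubular weight type, an explicit Coxeter stable vector with three distinct coordinates. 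You instead solve the radical equations of the symmetrized Euler form explicitly (the discrete-harmonicity observation along the arms is the right way to organize this) and extract the integrality constraint $p_i \mid (b-a)$, which kills any candidate strictly between $\vec{0}$ and $\mathbf{1}$. Your route is more uniform --- no case-by-case exhibition of witnesses for the four tubular types --- and it yields as a by-product the full description of the Coxeter stable cone used later in Section~\ref{scanonical}; the paper's trick is shorter if one is content to quote one example per type. One small bookkeeping caveat: your formula $(S_0,S_{\c})=n-2$ tacitly assumes every $p_i\geq 2$ (an arm with $p_i=1$ is a direct arrow $0\to\c$ contributing $-1$ as well), but your vertex-$0$ equation as written, with $y_i^{(1)}=b$ for such an arm, already accounts for this, so the final identity $(a-b)\bigl((n-2)-\sum_i 1/p_i\bigr)=0$ and the rest of the argument are unaffected.
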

\begin{proof}
This follows from any one of several possible elementary computations that we omit. We note however, that it can easily be verified from first principles without resort to invoking any special theory (such as Proposition~\ref{p:regssforcc}).  Indeed, the indecomposable projectives form a basis for $K_0(A)$ and if $P_{\vec{x}}$ denotes the projective corresponding to the vertex $\vec{x}$, then $\nu(P_{\vec{x}}) = I_{\vec{x}}$ where $I_{\vec{x}}$ is the indecomposable injective corresponding to $\vec{x}$. Hence the Coxeter transformation $\Phi$ is readily computed. In the non-tubular case, one needs only check that $\mathbf{1}$ spans the eigenspace $E_1$ of $\Phi$ with eigenvalue 1. In the tubular case, one needs first to show that this eigenspace contains $\mathbf{1}$ and is 2-dimensional.  If it were not minimal, then there would be another Coxeter stable dimension vector $\vec{d'}$ all of whose entries are 0 or 1. Writing $I = \{ v \in Q_0| d'_v = 1\}$ we see then that every vector $\vec{b} \in E_1$ has the same co-ordinates on $I$ and the same on $Q_0 - I$. We thus need only find another Coxeter stable vector with 3 distinct co-ordinates. For example, the tubular canonical algebra with weights $(p_1,p_2,p_3) = (4,4,2)$ has a Coxeter stable dimension vector 
\[\xymatrix{
		& 3 \ar[r] & 2  \ar[r] & 1 \ar[dr] & \\ 
4 \ar[ur] \ar[r] \ar[drr] & 3 \ar[r] & 2  \ar[r] & 1 \ar[r] & 0\\
		& & 2  \ar[urr] &  &  
	}\]
The other tubular canonical algebras have a similar Coxeter stable dimension vector.
\end{proof}
This proposition suggests that it might be interesting to study the moduli stack $\X^S$ of Serre stable $A$-modules with dimension vector $\vec{d} = {\bf 1}$ and shift parameter $d=1$. By Propositions~\ref{pisDM}, \ref{ponevector}, a $k$-point of this stack is given by a representation
$M=(M_v,\varphi_{\alpha})_{v\in Q_0,\alpha\in Q_1}$ of $Q$, satisfying the above relations, with dimension vector ${\bf 1}$, and which is Serre stable in the sense that 
\[M\simeq M\otimes_A^\L DA[-1]\] 
Equivalently, Serre stability here means $\Tor^A_1(M,DA)=D\Ext^1_A(M, A)\simeq
M$ and $\Tor^A_i(M,DA)=D\Ext^i_A(M, A) = 0$ if $i\neq1$.

It will be instructive to examine closely the $k$-points of $\X^S$ in order to have a good guess as to what $\X^S$ is. The following lemmas will be helpful in this regard. 
\begin{lemma} \label{lem:cond}
	Let $M$ be a $k$-point of $\X^S$. Then for any other
	representation $N$ we have for all $i$: \[\Ext^i_A(M,N)\simeq D\Ext^{1-i}_A(N,M).\]
	In particular $\Hom_A(M,N)\simeq D\Ext^1_A(N,M)$ and $\Ext^1_A(M,N)=D\Hom_A(N,M)$.
\end{lemma}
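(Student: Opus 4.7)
The plan is to combine two standard ingredients: the Serre duality formula for $\D^b(\Amod)$ (valid since $A$ has finite global dimension), and the hypothesis that $M$ is Serre stable with shift parameter $d=1$. Concretely, Serre duality gives, for any $N$ and any integer $i$,
\[
\Ext^i_A(M,N) \;=\; \Hom_{\D^b}(M,N[i]) \;\simeq\; D\Hom_{\D^b}(N[i],\nu M)
\;=\; D\Hom_{\D^b}(N,\nu M[-i]),
\]
so the task reduces to identifying $\nu M$ inside $\D^b(\Amod)$.

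Here I would use the description of Serre stability recalled just above the lemma. The conditions $\Tor^A_1(M,DA)\simeq M$ and $\Tor^A_i(M,DA)=0$ for $i\neq 1$ say that the complex $\nu M = M\otimes^\L_A DA$ has cohomology concentrated in a single cohomological degree, with $H^{-1}(\nu M)\simeq M$ and all other cohomologies zero. A standard truncation argument (a complex with cohomology supported in one degree is quasi-isomorphic to that cohomology placed in the same degree) then produces an isomorphism
\[
\nu M \;\simeq\; M[1]\qquad\text{in }\D^b(\Amod).
\]
This is precisely the unshifted form of the Serre-stability isomorphism $\nu_1 M\simeq M$.

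Substituting $\nu M\simeq M[1]$ into the Serre duality formula above yields
\[
\Ext^i_A(M,N) \;\simeq\; D\Hom_{\D^b}\bigl(N,M[1-i]\bigr) \;=\; D\Ext^{1-i}_A(N,M),
\]
which is the claimed isomorphism. The two distinguished cases ($i=0$ and $i=1$) then follow immediately. I do not anticipate a genuine obstacle here: the only point that needs a moment of care is the passage from the module-level Tor statement to a quasi-isomorphism $\nu M\simeq M[1]$ in the derived category, but this is routine once one notes that the cohomology of $\nu M$ lives in a single degree, so that standard truncation produces the desired identification.
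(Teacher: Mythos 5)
Your proof is correct and follows essentially the same route as the paper: Serre duality in $\D^b(\Amod)$ plus the identification $\nu M\simeq M[1]$, which the paper takes directly from the defining isomorphism $M\simeq\nu_1 M$ of a $k$-point of $\X^S$ while you rederive it from the equivalent Tor formulation via truncation. Both arguments are the same in substance.
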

\begin{proof}
	We have $\Hom_\D(M,N[i])=D\Hom_\D(N[i],\nu M)=D\Hom_\D(N,\nu M[-i])$ and since $M\simeq \nu M[-1]$ we
	have \[\Hom_\D(M,N[i])\simeq D\Hom_\D(N, M[1-i])\] from which the result follows. 
\end{proof}
\begin{lemma}  \label{lformofkpt} 
	Let $M=(M_v,\varphi_{\alpha})_{v\in Q_0,\alpha\in Q_1}$ be a $k$-point of $\X^S$. If $\varphi_{ax_i}=0$ for some $a=1,\dots,p_i$ then
	$\varphi_{bx_i}=0$ for all
	$b=1,\dots,p_i$. Furthermore, if $\varphi_{x_i}=0$ then $\varphi_{x_j}\neq 0$ if $i\neq j$.
\end{lemma}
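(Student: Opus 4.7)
The plan is to combine Lemma \ref{lem:cond} with an explicit projective resolution of the simple module at each interior vertex of an arm, and then to deploy the canonical relations to conclude part (b). Throughout we exploit that because $\vec{d}=\mathbf{1}$, every $\Hom_A(P_w,M)=M_w=k$.

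For part (a), I would fix an arm $i$ and an interior vertex $v=a\x_i$ with $1\le a\le p_i-1$. Since the paths out of $v$ stay on arm $i$ and are not involved in the canonical relations, the radical of $P_{a\x_i}$ is generated by the arrow $(a+1)x_i$, giving a short exact sequence
\[0\lm P_{(a+1)\x_i}\lm P_{a\x_i}\lm S_{a\x_i}\lm 0.\]
Applying $\Hom_A(-,M)$ identifies the connecting map $k=M_{a\x_i}\lm M_{(a+1)\x_i}=k$ with multiplication by the scalar $\varphi_{(a+1)x_i}$, so both $\Hom_A(S_{a\x_i},M)$ and $\Ext^1_A(S_{a\x_i},M)$ are $1$-dimensional if $\varphi_{(a+1)x_i}=0$ and vanish otherwise. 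On the other hand, $\Hom_A(M,S_{a\x_i})$ is nonzero exactly when $S_{a\x_i}$ is a quotient of $M$, equivalently when the unique arrow $ax_i$ into $a\x_i$ vanishes, i.e.\ $\varphi_{ax_i}=0$. Lemma \ref{lem:cond} supplies $\Hom_A(M,S_{a\x_i})\simeq D\Ext^1_A(S_{a\x_i},M)$, so
\[\varphi_{ax_i}=0 \iff \varphi_{(a+1)x_i}=0\]
for each $1\le a\le p_i-1$, and chaining these equivalences gives part (a).

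For part (b), I would argue by contradiction: suppose $\varphi_{x_i}=\varphi_{x_j}=0$ with $i\neq j$. By part (a), every arrow on arms $i$ and $j$ vanishes, so the path products $\Pi_i$ and $\Pi_j$ are both zero. A short case analysis on the canonical relations $\Pi_k=\Pi_1+\lambda_k\Pi_2$ for $k\ge 3$, using that the parameters $\lambda_k$ are nonzero and pairwise distinct, forces $\Pi_1=\Pi_2=0$ and hence $\Pi_k=0$ for every $k$. Applying part (a) arm by arm, every arrow of $Q$ is zero on $M$, so $M=\bigoplus_{v\in Q_0}S_v$.

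Finally, I would exclude this semisimple $M$ by a direct application of Lemma \ref{lem:cond}. Since $c$ is a sink, $S_c=P_c$ is projective, so $\Ext^1_A(S_c,M)=0$; Lemma \ref{lem:cond} would then force $\Hom_A(M,S_c)\simeq D\Ext^1_A(S_c,M)=0$, contradicting the obvious nonzero projection $M=\bigoplus_v S_v\to S_c$. I expect the main obstacle to be the bookkeeping in part (a)---identifying the connecting map in the long exact $\Ext$-sequence with multiplication by the scalar $\varphi_{(a+1)x_i}$ in the correct direction. Once that identification is in place, part (a) follows by a purely formal chain of equivalences, and part (b) reduces by the canonical relations to the semisimple case, which is then excluded by a second application of Lemma \ref{lem:cond}.
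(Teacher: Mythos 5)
Your proof is correct and follows essentially the same route as the paper: the same short projective resolution $0 \to P_{(a+1)\vec{x}_i} \to P_{a\vec{x}_i} \to S_{a\vec{x}_i} \to 0$ combined with Lemma~\ref{lem:cond}, and then the canonical relations to reduce part (b) to excluding the semisimple module $\bigoplus_v S_v$. The only (harmless) differences are that you extract the two-way equivalence $\varphi_{ax_i}=0 \iff \varphi_{(a+1)x_i}=0$ from a single long exact sequence, where the paper runs a separate injective-resolution argument for the downward direction, and that you rule out $\bigoplus_v S_v$ by applying Lemma~\ref{lem:cond} to the projective simple $S_{\vec{c}}$, where the paper simply observes that a Serre stable module cannot contain the projective summand $P_{\vec{c}}$.
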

\begin{proof}
	Suppose $a\neq p_i$ and that $\varphi_{ax_i}=0$. We claim that $\varphi_{(a+1)x_i}=0$.
	Note that $\Hom_A(M, S_{a\x_i})\simeq k$ where $S_{a\x_i}$ is the simple representation at vertex
	$a\x_i$. We now compute $\Ext^1_A(S_{a\x_i},M)$. We have the following projective
	resolution \[0\lm P_{(a+1)\x_i}\lm P_{a\x_i}\lm\ S_{a\x_i}\lm 0\] where $P_{a\x_i}$ is
	the projective at vertex $a\x_i$, which gives the following
	long exact sequence\[0\lm\Hom_A(S_{a\x_i},M)\lm \Hom_A(P_{a\x_i},M)\xrightarrow{f}
	\Hom_A(P_{(a+1)\x_i},M)\lm \Ext^1_A(S_{a\x_i},M)\lm 0.\] Now
	$\Hom_A(P_{a\x_i},M)\simeq \Hom_A(P_{(a+1)\x_i},M)\simeq k$ and so
	if  $\varphi_{(a+1)x_i}\neq0$, then $\Hom_A(S_{a\x_i},M)=0$ implying $f$ is surjective
	and forcing $\Ext^1_A(S_{a\x_i},M)=0$. This contradicts, Lemma \ref{lem:cond} and so
	$\varphi_{(a+1)x_i}=0$.

	A similar argument, using injective resolutions shows that if $a\neq 1$ and $\varphi_{ax_i}=0$ then
	$\varphi_{(a-1)x_i}=0$.

	Note if $\varphi_{x_i}=\varphi_{x_j}=0$ then the relations, and the result we just proved, imply all arrows must
	be $0$. Such a representation cannot be Serre stable, for it contains a projective summand,
	namely $P_{\c}$.
\end{proof}
This lemma shows that $\X^S$ is covered by open substacks $\mathbb{U}_{i}$ consisting of Serre stable families where only $\varphi_{x_i}$ is allowed to be $0$. Furthermore, any $k$-point of say $\mathbb{U}_n$ is given by a module isomorphic to one of the form

\begin{equation}
 \xymatrix{
		&k\ar[r]^{1}&k\ar[r]^{1}&\dots&\dots\ar[r]^{1}&
		k\ar[ddr]^{1}\\
		&k\ar[r]_{1}&k\ar[r]_{1}&\dots&\dots\ar[r]_{1}
		&k\ar[dr]_{x}\\
		k\ar[uur]^{1}\ar[ur]_{1}\ar[dr]^{y}&\vdots&\vdots&\vdots&\vdots&\vdots&k\\
		&k\ar[r]^{y}&k\ar[r]^{y}&\dots&\dots\ar[r]^{y}&k
		\ar[ur]^{y}
	}
\label{ekpointofUn}
\end{equation} 
where $x \neq 0, \lambda_3,\ldots, \lambda_{n-1}$ and $y^{p_n} = x - \lambda_n$. This is indecomposable for $y \neq 0$ and the direct sum of $p_n$ indecomposables when $y = 0$. From Proposition~\ref{pisDM}, we see that the corresponding inertia groups are 1 and $\mu_{p_n}$. This immediately suggests the following

\begin{thm}\label{thm:main}
There is an isomorphism $\X^S\simeq \Y$.
\end{thm}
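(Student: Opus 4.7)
The plan is to construct an explicit morphism $\Phi\colon \Y \to \X^S$ via the local chart description of $\Y$ from Subsection~\ref{subsec:wtdlines}, and to verify it is an isomorphism using the Deligne-Mumford structure of $\X^S$ guaranteed by Propositions~\ref{pisDM} and \ref{ponevector}. The shape of the argument is thus, first, to build flat families of Serre stable $A$-modules on the étale covers $\tilde{U}_i$; second, to show these families are $\mu_{p_i}$-equivariant and hence define morphisms on the quotient charts $[\tilde{U}_i/\mu_{p_i}]$; and third, to glue them to a morphism $\Phi$ over all of $\Y$.

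For the explicit construction on $\tilde{U}_n$, let $s$ be the local coordinate with $s^{p_n}=t-\lambda_n$, where $t$ is a standard local coordinate on $\PP^1$ near $\lambda_n$. I would define a flat family $\calm$ of $A$-modules on $\tilde{U}_n$ by placing $\calo_{\tilde{U}_n}$ at every vertex of $Q$, sending each arrow along the first $n-1$ branches of the quiver to multiplication by a nowhere-vanishing section (normalised so that the canonical relations hold) and sending every arrow of the $n$-th branch to multiplication by $s$. The canonical relation $x_n^{p_n}=x_1^{p_1}+\lambda_n x_2^{p_2}$ reduces to $s^{p_n}=t-\lambda_n$ after normalisation, so $\calm$ is a genuine family of $A$-modules. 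Serre stability of $\calm$ is a direct check from the fibrewise computation in \eqref{ekpointofUn} together with Lemma~\ref{lserrebchange}, which guarantees $\nu_1 \calm$ has the required vanishing and dimension vector after base change. The $\mu_{p_n}$-action on $\tilde{U}_n$ scales $s$ by a primitive $p_n$-th root of unity; this can be absorbed by an $A$-module automorphism rescaling the vertices along the $n$-th branch, so $\calm$ descends to a Serre stable family on $[\tilde{U}_n/\mu_{p_n}]$, giving a morphism into $\mathbb{U}_n$. Symmetric constructions on the remaining charts, together with the observation that on the overlaps (where all arrows are non-vanishing) the different local forms become canonically isomorphic once one rigidifies at vertex $0$, provide the gluing to a global morphism $\Phi\colon \Y \to \X^S$.

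To prove $\Phi$ is an isomorphism, I would first extend Lemma~\ref{lformofkpt} to families — the same Fitting-ideal argument used in Proposition~\ref{pXdd}, applied to the sections $\varphi_{ax_i}$ of the universal family, shows that the loci where a given $\varphi_{ax_i}$ vanishes for some $a$ and for all $a$ agree, and hence that the $\mathbb{U}_i$ form an open cover of $\X^S$. By the $k$-point analysis following Lemma~\ref{lformofkpt}, together with Propositions~\ref{pisDM} and \ref{ponevector}, the induced map on $k$-points is bijective with matching inertia groups (namely $\mu_{p_i}$ at the $n$ stacky points and trivial elsewhere). It therefore suffices to check that $\Phi$ is étale, which reduces to an infinitesimal deformation computation: a first-order deformation over $k[\varepsilon]$ of the family \eqref{ekpointofUn} is classified, after rigidification at vertex $0$, by a first-order deformation of the parameter $s$ alone (all other entries being forced by the relations and the Serre stability condition), so the tangent map of $\Phi$ is an isomorphism at each point.

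The main obstacle, in my view, is the gluing step: one must track equivalence classes of isomorphisms modulo line bundles and scalars as in the definition of $\X^S(T)$ from Proposition~\ref{pXS}, and verify that the $\mu_{p_i}$-equivariant structures on the local families interact correctly with the rigidification at vertex $0$ when one passes between overlapping charts. A cleaner alternative would be to construct a two-sided inverse directly: given any Serre stable family in $\mathbb{U}_n(T)$, recover the coordinate $s$ of $\tilde{U}_n$ as a canonical ratio built from the arrows along the $n$-th branch, well-defined up to the $\mu_{p_n}$-action, and check functoriality in $T$.
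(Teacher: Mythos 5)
Your construction of the local families is essentially the paper's: the family you place on $\tilde{U}_n$ (structure sheaf at every vertex, units on the first $n-1$ branches, multiplication by $s$ on the $n$-th) is exactly the $\mu_{p_n}$-equivariant family $\tilde\calu$ over $R$ in (\ref{eUR}), and your use of Lemma~\ref{lformofkpt} to see that the $\mathbb{U}_i$ cover $\X^S$ also matches. The divergence is in how you verify that the resulting map is an isomorphism, and this is where there is a genuine gap. The paper does not use an ``\'etale plus bijective on $k$-points plus matching inertia'' criterion; it proves the universal property directly for an arbitrary test scheme $T$: given $\calm\in\mathbb{U}_n(T)$, Lemma~\ref{calc} puts it in the normal form with line bundles $L_1,\dots,L_{p_n-1}$ along the $n$-th branch, and the Serre stability isomorphism $\phi\colon\calm\simeq L_{p_n-1}^{-1}\otimes_T\nu_1\calm$ forces $L_i\simeq L_1^{\otimes i}$ and $L_1^{\otimes p_n}\simeq\calo_T$. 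That trivialisation of $L_1^{\otimes p_n}$ is precisely the data of an \'etale $\mu_{p_n}$-cover $\tilde T\to T$ on which $\calm$ pulls back from $\tilde\calu$; this is the step that produces the quotient-stack structure $[\spec R/\mu_{p_n}]$ and it has no analogue in your argument. Your ``cleaner alternative'' of recovering $s$ as a canonical ratio of the arrows on the $n$-th branch cannot work as stated: over a general $T$ those arrows are maps between non-isomorphic line bundles $L_i\to L_{i+1}$, so no such ratio exists in $\calo_T$ until one has passed to the cover $\tilde T$ — which is the very thing being constructed.

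Your \'etale route could in principle be made to work, but the burden it carries is concentrated exactly at the stacky points, and your sketch there is too quick. At a point of $\mathbb{U}_n$ with $y=0$ the module decomposes as $M=\oplus_{i=0}^{p_n-1}\nu_1^iN$, and a first-order deformation of $M$ in the rigidified stack $\X$ lives in $\Ext^1_A(M,M)\simeq k^{p_n}$ (by Lemma~\ref{lem:cond}, $\Ext^1(M_i,M_j)=D\Hom(M_j,M_{i+1})$), not in a one-dimensional space parametrised by $s$; it is only after imposing that the isomorphism $\theta$ extends over $k[\varepsilon]$ that the deformation space cuts down to the diagonal copy of $k$, carrying its residual $\mu_{p_n}$-action, so as to match $\tilde U_n$ at the ramification point. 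You assert that ``all other entries are forced by the relations and the Serre stability condition'' but this equivariant deformation computation — together with a justification that \'etale, representable (equivalently, injective on geometric inertia) and bijective on geometric points implies isomorphism for these Deligne--Mumford stacks — is the actual content of your verification and is missing. I would recommend either supplying that computation in full, or replacing the \'etale argument by the paper's direct universality argument via the torsion line bundle $L_1$ and the associated cyclic cover of $T$.
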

\begin{proof}
We first compute the open substack $\mathbb{U}_{n}$. The other open patches $\mathbb{U}_i$ can be computed similarly. It will then be clear from the computations below, that they glue together to form $\Y$ as desired. 
\begin{lemma}\label{calc}
	Let $T$ be a scheme. Elements of $\mathbb{U}_n(T)$, are isomorphic to precisely those of the form
	$\calm\simeq L_{p_n-1}^{-1}\otimes_T\nu_1\calm$ where 
\[\calm:=\quad\quad
\begin{minipage}{10pt}
		\xymatrix{
		&\calo\ar[r]^{1}&\calo\ar[r]^{1}&\dots&\dots\ar[r]^{1}&
		\calo\ar[ddr]^{1}\\
		&\calo\ar[r]_{1}&\calo\ar[r]_{1}&\dots&\dots\ar[r]_{1}
		&\calo\ar[dr]_{x}\\
		\calo\ar[uur]^{1}\ar[ur]_{1}\ar[dr]^{a_1}&\vdots&\vdots&\vdots&\vdots&\vdots&\calo\\
		&L_1\ar[r]^{a_2}&L_2\ar[r]^{a_3}&\dots&\dots\ar[r]^{a_{p_n-1}}&L_{p_n-1}
		\ar[ur]^{a_{p_n}}
	}
	\end{minipage},
	\] $\calo=\calo_T$ and $L_i$ are invertible sheaves on $T$.
\end{lemma}

\begin{proof} An element of $\mathbb{U}_n(T)$ consists of a representation 
\[\calm:=\quad\quad
\begin{minipage}{10pt}
		\xymatrix{
			&L_{\x_1}\ar[r]^{\varphi_{2x_1}}
			&L_{2\x_1}\ar[r]^{\varphi_{3x_1}}
			&\dots&\dots\ar[r]
			& L_{(p_1-1)\x_1}\ar[ddr]^{\varphi_{p_1x_1}}\\
			&L_{\x_2}\ar[r]_{\varphi_{2x_2}}
			&L_{2\x_2}\ar[r]_{\varphi_{3x_2}}
			& \dots&\dots\ar[r]
			&L_{(p_2-1)\x_2}\ar[dr]_{\varphi_{p_2x_2}}\\
			L_0\ar[uur]^{\varphi_{x_1}}\ar[ur]_{\varphi_{x_2}}\ar[dr]^{\varphi_{x_n}}
			&\vdots&\vdots&\vdots&\vdots&\vdots&L_{\c}\\
			&L_{\x_n}\ar[r]^{\varphi_{2x_n}}
			&L_{2\x_n}\ar[r]^{\varphi_{3x_n}}
			&\dots&\dots\ar[r]
			&L_{(p_n-1)\x_n} \ar[ur]^{\varphi_{p_nx_n}}
	}
	\end{minipage}
\] together with an isomorphism $\calm\simeq N\otimes_T\nu_1\calm$ 
 where $N, L_{a\vec{x}_i}$ are invertible sheaves on $T$.
By applying $L_{0}^{-1}\otimes_T -$ to obtain an isomorphic element of $\mathbb{U}_n(T)$, we can assume
$L_0=\calo$. Furthermore, for $i=1,\dots, n-1$ all $\varphi_{x_i}$ must be surjective for otherwise, there
will be an element of $\mathbb{U}_n(k)$ with $\varphi_{x_i}=0$ which contradicts the definitions of $\mathbb{U}_n$.
Thus, these $\varphi_{x_i}$ must be isomorphisms and so $L_{a\x_i}\simeq \calo$ and by
further changing bases if necessary, we see that $\calm$ is of the form given.

Now we compute $\nu_1\calm$.
Let  $P_{a\x_i}$ be the projective at vertex $a\x_i$ and recall
	that $P_{a\x_i}\otimes_A DA=I_{a\x_i}$ which is the injective at vertex $a\x_i$. 

	We have the following resolution 
	\[0\lm 
		\begin{bmatrix}
			\calo\otimes P_{\x_n}\\
			L_1\otimes P_{2\x_n}\\
			\vdots\\
			L_{(p_n-1)}\otimes P_{\c}
		\end{bmatrix}
		\xrightarrow{\partial}
		\begin{bmatrix}
			\calo\otimes P_0\\
			L_1\otimes P_{\x_n}\\
			\vdots\\
			L_{p_n-1}\otimes P_{(p_n-1)\x_n}
		\end{bmatrix}
	\lm \calm \lm 0, \quad \partial=
	\begin{bmatrix}
		1\otimes x_n&0&\dots &-a_{p_n}\otimes x_2^{p_2}\\
		-a_1\otimes 1&1\otimes2x_n\\
		0&-a_2\otimes 1\\
		\vdots&\vdots\\
		0&0&\dots& 1\otimes p_nx_n
	\end{bmatrix}
\]
We now apply $-\otimes_ADA$ to the resolution to obtain the complex
\[0\lm
\begin{bmatrix}
			\calo\otimes I_{\x_n}\\
			L_1\otimes I_{2\x_n}\\
			\vdots\\
			L_{(p_n-1)}\otimes I_{\c}
		\end{bmatrix}
		\xrightarrow{\partial}
		\begin{bmatrix}
			\calo\otimes I_0\\
			L_1\otimes I_{\x_n}\\
			\vdots\\
			L_{p_n-1}\otimes I_{(p_n-1)\x_n}
		\end{bmatrix}\lm 0
\] It is easy to see that $\partial$ is surjective and hence there is homology only in degree $1$. We
compute the kernel and obtain
\[
\nu_1\calm=\quad\quad
\begin{minipage}{10pt}
		\xymatrix{
		&L_{p_n-1}\ar[r]^{1}&L_{p_n-1}\ar[r]^{1}&\dots&\dots\ar[r]^{1}&
		L_{p_n-1}\ar[ddr]^{1}\\
		&L_{p_n-1}\ar[r]_{1}&L_{p_n-1}\ar[r]_{1}&\dots&\dots\ar[r]_{1}
		&L_{p_n-1}\ar[dr]_{1}\\
		L_{p_n-1}\ar[uur]^{1}\ar[ur]_{x}\ar[dr]^{a_{p_n}}&\vdots&\vdots&\vdots&\vdots&\vdots&L_{p_n-1}\\
		&\calo\ar[r]^{a_1}&L_1\ar[r]^{a_2}&\dots&\dots\ar[r]^{a_{p_n-2}}&L_{p_n-2}
		\ar[ur]^{a_{p_{n-1}}}
	}
	\end{minipage}
\]Thus, by fixing an isomorphism $\phi\colon \calm\simeq L_{p_n-1}^{-1}\otimes_T\nu_1\calm$ we
obtain an element of $\X^S(T)$.
\end{proof}
The modules in (\ref{ekpointofUn}) suggest what the universal representation on $\mathbb{U}_n$ looks like. First, let \[R:=\frac{k[x^{\pm 1}, (x-\lambda_3)^{-1},\dots,(x-\lambda_{n-1})^{-1}, y]}{y^{p_n}=x-\lambda_n}\]
Note that 
$\mu_{p_n}$ acts on $R$ by multiplying $y$ by a primitive $p_n$-th root of unity $\zeta$. In fact
$[\spec R/\mu_{p_n}]$ is
an open substack of $\Y$. We claim that $[\spec R/\mu_{p_n}]\simeq \mathbb{U}_{n}$. To show this consider the
following flat family $\tilde\calu$ of $A$-modules over $R$: 
\begin{equation}
 \xymatrix{
		&R\ar[r]^{1}&R\ar[r]^{1}&\dots&\dots\ar[r]^{1}&
		R\ar[ddr]^{1}\\
		&R\ar[r]_{1}&R\ar[r]_{1}&\dots&\dots\ar[r]_{1}
		&R\ar[dr]_{x}\\
		R\ar[uur]^{1}\ar[ur]_{1}\ar[dr]^{y}&\vdots&\vdots&\vdots&\vdots&\vdots&R\\
		&R\ar[r]^{y}&R\ar[r]^{y}&\dots&\dots\ar[r]^{y}&R
		\ar[ur]^{y}
	}
\label{eUR}
\end{equation} 
	This family is $\mu_{p_n}$-equivariant (as replacing $y$ with $\zeta^iy$ yields an 
	isomorphic family) and hence is a family over $[\spec R/\mu_{p_n}]$. From the previous
	lemma, we see that indeed $\nu_1\tilde\calu\simeq\tilde\calu$ and so $\tilde\calu\in \X^S(R)$ 
	and thus we get a map
	$[\spec R/\mu_{p_n}]\to \mathbb{U}_n$. We claim that the family is universal and hence the map is in fact an
isomorphism. 
More precisely, we aim to show that any $\calm\in \mathbb{U}_n(T)$, which must have the form as described in Lemma
\ref{calc}, is a pullback of $\tilde\calu$ via a unique morphism $T\to[\spec R/\mu_{p_n}]$.
	From the proof of Lemma \ref{calc}, we see that $\phi$ induces isomorphisms \[L_1\otimes_T L_{p_n-1}\simeq \calo, \quad L_2\otimes_T L_{p_n-1}\simeq L_1,\quad\dots\quad,L_{p_n-1}\otimes_T
L_{p_n-1}\simeq L_{p_n-2}\] thus \[L_1^{\otimes 2}\simeq L_2, \quad L_1^{\otimes 3}\simeq
L_3,\dots\quad L_1^{\otimes p_n}\simeq \calo.\] 
Hence $L_1$ is a $p_n$-torsion line bundle, which together with the isomorphism $L_1^{\otimes p_n}\simeq \calo$ defines an \'etale cyclic cover $\pi\colon\tilde{T} = \underline{\spec}_T \left( \bigoplus_{i=0}^{p_n-1} L^{\otimes i} \right)\to T$.
We thus get 
\[ 
\pi^*\calm=\quad\quad
\begin{minipage}{10pt}
		\xymatrix{
		&\calot\ar[r]^{1}&\calot\ar[r]^{1}&\dots&\dots\ar[r]^{1}&
		\calot\ar[ddr]^{1}\\
		&\calot\ar[r]_{1}&\calot\ar[r]_{1}&\dots&\dots\ar[r]_{1}
		&\calot\ar[dr]_{x}\\
		\calot\ar[uur]^{1}\ar[ur]_{1}\ar[dr]^{a_1}&\vdots&\vdots&\vdots&\vdots&\vdots&\calot\\
		&\calot\ar[r]^{a_1}&\calot\ar[r]^{a_1}&\dots&\dots\ar[r]^{a_1}&\calot
		\ar[ur]^{a_1}
	}
	\end{minipage}
\] where $\calo=\calo_{\tilde T}$. This family comes as a pullback of $\tilde\calu$ via the map
		$R\to \calo_{\tilde T}$ given by $x\mapsto x$ and
	$y\mapsto a_1$.
This verifies the isomorphism $\mathbb{U}_n \simeq [\spec R/ \mu_{p_n}]$. Patching together the $\mathbb{U}_i$ using (\ref{eUR}) now shows that $\X^S \simeq \Y$ as desired. 
\end{proof}

\vspace{3mm}

Let $\mathcal{U}$ be the universal representation on $\X^S$ and $\mathcal{T}:= \mathcal{U}^{\vee}$ be the dual bundle. We will recover Geigle-Lenzing's derived equivalence in Section~\ref{sTistilting}, by showing that $\mathcal{T}$ is a tilting bundle on $\X^S$.  
In preparation for this we need one more result about the $k$-points of $\X^S$. 
\begin{prop}  \label{pcanregsemi}
Let $M$ be a $k$-point of $\X^S$. Then $M$ is regular semisimple. 
\end{prop}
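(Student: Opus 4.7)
The plan is to use the explicit description of $k$-points of $\X^S$ from Theorem~\ref{thm:main} and the form (\ref{ekpointofUn}): by Propositions~\ref{ponevector} and \ref{pisDM}(i), $M$ decomposes as $M = M_1 \oplus \cdots \oplus M_r$ into non-isomorphic indecomposables forming a single $\nu_1$-orbit with $\nu_1 M_j \simeq M_{j+1 \bmod r}$. In the generic case $r = 1$ and $M$ is indecomposable; in the stacky case at some $q_i$, $r = p_i$ with $M = M_1 \oplus \bigoplus_{a=1}^{p_i - 1} S_{a\x_i}$, where $M_1$ is generated over $A$ by its $1$-dimensional component at the source vertex $0$. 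It remains to verify that each $M_j$ is regular simple.

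For the vanishing $\langle \vec d, [M_j] \rangle = 0$, I would set $a_{ji} := \langle [M_j], [M_i] \rangle$. Serre duality in $\D^b(\Amod)$ combined with $\nu_1 M_j \simeq M_{j+1}$ gives $a_{ji} = -a_{i, j+1}$, and iterating yields $a_{ji} = a_{j+1, i+1}$. Hence $a_{ji}$ depends only on $j - i \bmod r$, so the row sum $\langle \vec d, [M_i] \rangle = \sum_j a_{ji}$ is independent of $i$; since its sum over all $i$ equals $\langle [M], [M] \rangle = 0$ (by Lemma~\ref{lem:cond} applied to the Serre stable $M$), each row sum vanishes.

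For the negativity $\langle \vec d, [N] \rangle < 0$ on proper nonzero submodules $N \subsetneq M_j$, Lemma~\ref{lem:cond} reduces the task to showing $\dim \Hom_A(M, N) < \dim \Hom_A(N, M)$. The inclusion $N \hookrightarrow M_j \hookrightarrow M$ is a nonzero element of $\Hom_A(N, M)$. For the vanishing $\Hom_A(M, N) = 0$, since $[M] = \mathbf 1$, any submodule $N$ is determined by its support $V(N) \subseteq Q_0$, which is closed under those arrows of $M$ that are nonzero. The big summand $M_1$ is generated over $A$ by its component at $0$, so any map $M_1 \to N$ vanishes unless $0 \in V(N)$; but $0 \in V(N)$ together with arrow-closure would force $V(N) \supseteq V(M_1)$, contradicting properness of $N$ in $M_j$. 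The simple summands $S_{a\x_i}$ (when present) cannot map into $N$ either, since their supports lie on an arm disjoint from $V(M_j) \supseteq V(N)$. When $M_j$ is itself one of these simples, there are no proper nonzero submodules to check.

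The most delicate point is the uniform $\Hom$ vanishing across both the generic and stacky cases; this is made tractable by the hypothesis $[M] = \mathbf 1$, which reduces submodule structure to arrow-closure combinatorics on vertex supports.
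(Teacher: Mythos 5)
Your proof is correct, but it reaches the conclusion by a genuinely different route from the paper's. The paper's argument is a single Euler-form computation: it observes that $\mathbf{1} = [P_{\vec{0}}] - [P_{\vec{c}}]$ in $K_0(A)$, so that $\langle \mathbf{1}, [N] \rangle = \dim Ne_{\vec{0}} - \dim Ne_{\vec{c}}$ for \emph{any} module $N$; the description of the $k$-points in (\ref{ekpointofUn}) then shows every proper nonzero submodule of an indecomposable summand contains the socle vertex $\vec{c}$ but not the top vertex $\vec{0}$, giving $\langle \mathbf{1},[N]\rangle = -1$ at once (and $0$ on the summands themselves, since both dimensions agree there). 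You instead split the two halves of regular simplicity and treat each with Serre duality: for $\langle \vec{d},[M_j]\rangle = 0$ you run an orbit-averaging argument on the matrix $a_{ji} = \langle [M_j],[M_i]\rangle$ using $\nu_1 M_j \simeq M_{j+1}$, and for the strict negativity you use Lemma~\ref{lem:cond} to rewrite $\langle \vec{d},[N]\rangle$ as $\dim\Hom_A(M,N) - \dim\Hom_A(N,M)$ and then kill $\Hom_A(M,N)$ by the arrow-closure combinatorics of supports. Both arguments rest on the same classification of $k$-points (Lemma~\ref{lformofkpt} / (\ref{ekpointofUn})) and on Propositions~\ref{ponevector} and \ref{pisDM}. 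What the paper's version buys is brevity and explicitness (the value $-1$ is computed, not just bounded). What yours buys is portability: your vanishing argument uses only that the summands of a Serre stable module form a single $\nu_1$-orbit, not the special presentation of $\mathbf{1}$ in terms of projectives, and your $\Hom$-vanishing step is exactly the kind of orthogonality that reappears in Proposition~\ref{pendreg} and the Bridgeland--King--Reid verification later. Two small points of care that you did handle correctly: the claim that $V(N)$ is closed under the nonzero arrows uses that all components of $M$ are at most one-dimensional, and the reduction $\langle [M],[N]\rangle = \dim\Hom_A(M,N) - \dim\Hom_A(N,M)$ uses that Lemma~\ref{lem:cond} forces $\Ext^2_A(M,N) = 0$ even though $A$ may have global dimension $2$.
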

\begin{proof}
First note that the dimension vector $\mathbf{1}$ can be expressed as $\mathbf{1} = [P_{\vec{0}}] - [P_{\vec{c}}]$.  From Lemma~\ref{lformofkpt} or (\ref{ekpointofUn}), we know that the indecomposable summands $M_i$ of $M$ are either simple or have a unique top $S_0$ and unique socle $S_{\vec{c}}$. Thus if $N$ is any proper submodule of $M_i$, then there exists a subset $I \subseteq \{\x_1,2\x_2, \ldots, (p_n-1)\x_n\}$ such that 
$$[N] = [S_{\vec{c}}] + \sum_{\vec{x} \in I} [S_{\x}]. $$
We compute then that 
$$ \langle \mathbf{1}, [N] \rangle = \langle - [P_{\vec{c}}],[S_{\vec{c}}] \rangle = -1 < 0.$$
Hence $M$ is regular semisimple. 
\end{proof}

It would be an interesting exercise to compute the Serre stable moduli stack for other minimal Coxeter stable dimension vectors in the tubular case. The Coxeter stable dimension vectors which are $\geq 0$ are easy enough to describe. For the tubular algebra with weights $(4,4,2)$, these are all $\mathbb{Q}_{\geq 0}$-linear combinations of the two vectors
\[\vec{d'} = \xymatrix{
		& 3 \ar[r] & 2  \ar[r] & 1 \ar[dr] & \\ 
4 \ar[ur] \ar[r] \ar[drr] & 3 \ar[r] & 2  \ar[r] & 1 \ar[r] & 0\\
		& & 2  \ar[urr] &  &  
	}, \quad 
\vec{d''} = \xymatrix{
		& 1 \ar[r] & 2  \ar[r] & 3 \ar[dr] & \\ 
0 \ar[ur] \ar[r] \ar[drr] & 1 \ar[r] & 2  \ar[r] & 3 \ar[r] & 4\\
		& & 2  \ar[urr] &  &  
	}\]
The other cases are obtained similarly. The dimension vector $\vec{d'}$ above corresponds to a vector bundle on $\Y$ under the Geigle-Lenzing derived equivalence. We suspect that the Serre stable moduli stack for this dimension vector is still $\Y$. This would parallel the fact that the moduli space of skyscraper sheaves on an elliptic curve $E$ is isomorphic to $E$, which is in turn isomorphic to any moduli space of line bundles on $E$ with fixed degree. 
	\section{Beilinson algebra}\label{sec:Beil}
In this section, we examine the Beilinson algebra $B_n$, which is given by the following quiver 
\[\xymatrix@C=60pt{0\ar@/^2pc/[r]|{x_0}\ar@/^1pc/[r]|{x_1}
\ar@{}[r]|{\vdots}
\ar@/_2pc/[r]|{x_{n}}
&1
\ar@/^2pc/[r]|{x_0}\ar@/^1pc/[r]|{x_1}
\ar@{}[r]|{\vdots}
\ar@/_2pc/[r]|{x_{n}}
&\ar@{}[r]|{\dots}&
\ar@/^2pc/[r]|{x_0}\ar@/^1pc/[r]|{x_1}
\ar@{}[r]|{\vdots}
\ar@/_2pc/[r]|{x_{n}}
&n
}
\] 
with commutative relations $x_i x_j = x_j x_i$ for all $i,j$. We will compare our approach using the Serre stable moduli stack with the traditional quiver GIT approach for the case $\vec{d} = \mathbf{1}, d=n$. 

In the quiver GIT approach, one has to pick a stability condition, and here the usual one used is the linear map $\rho: K_0(B_n) \lm \Z$ defined by $\rho([M]) = \dim M e_n - \dim M e_0$ where $e_i \in B_n$ denotes the primitive idempotent corresponding to the vertex $i$. One considers only {\em $\rho$-stable} modules with dimension vector $\bf{1}$, which by definition means only those $M$ such that for any proper submodule $N$ of $M$, we have $\rho([N]) >0$. One readily sees that these are precisely the representations of the form
\begin{equation}
\xymatrix@C=60pt{k\ar@/^2pc/[r]|{x_{00}}\ar@/^1pc/[r]|{x_{10}}
\ar@{}[r]|{\vdots}
\ar@/_2pc/[r]|{x_{n0}}
&k
\ar@/^2pc/[r]|{x_{01}}\ar@/^1pc/[r]|{x_{11}}
\ar@{}[r]|{\vdots}
\ar@/_2pc/[r]|{x_{n1}}
&\ar@{}[r]|{\dots}&
\ar@/^2pc/[r]|{x_{0n}}\ar@/^1pc/[r]|{x_{1n}}
\ar@{}[r]|{\vdots}
\ar@/_2pc/[r]|{x_{nn}}
&k
}
\label{eBeik}
\end{equation}
where $\vec{x}_i := (x_{0i}, \ldots, x_{ni}) \neq \vec{0}$ for all $i$. This is an open condition. In this case, the commutativitiy relations show that all $\vec{x}_i$ are proportional and define a common point of $\PP^n$. If $\X^{\rho}_{\bf{1}}$ denotes the open substack of $\X_{\bf 1}$ consisting of $\rho$-stable modules, then quiver GIT in this case gives the moduli space $\X^{\rho}_{\bf 1}$ which is naturally isomorphic to $\PP^n$ (this will also be clear from our calculation of the Serre stable moduli stack below). 

We now turn to the Serre stable moduli stack. 
\begin{prop}  \label{prop:serreisrho}
Let $M$ be a Serre stable $B_n$-module with dimension vector $\bf 1$. Then $M$ is $\rho$-stable.
\end{prop}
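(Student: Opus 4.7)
The plan is to reduce $\rho$-stability to a statement about the arrows $\vec{x}_j$, and then invoke Serre duality to rule out violations. First I would note that for a module $M$ of dimension vector $\mathbf{1}$ displayed as in (\ref{eBeik}), any subrepresentation $N$ is determined by the set $S \subseteq \{0,\ldots,n\}$ of vertices at which it has rank one, subject to the closure condition that $i+1 \in S$ whenever $i \in S$ and $\vec{x}_i \neq 0$. Consequently, if $\vec{x}_j \neq 0$ for every $j = 0,\ldots,n-1$, then every non-empty $S$ must contain $n$ and every proper $S$ must avoid $0$, so every proper non-zero submodule $N$ satisfies $\rho([N]) = 1 > 0$. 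Thus it suffices to show that a Serre stable $M$ has $\vec{x}_j \neq 0$ for all $j$, or equivalently that $\Hom_A(M, S_i) = 0$ for every $i \geq 1$, since the simple $S_i$ is a quotient of $M$ precisely when $\vec{x}_{i-1} = 0$.

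To establish this vanishing I would apply Serre duality. The proof of Lemma~\ref{lem:cond} carries over verbatim to any shift parameter: the only ingredient is the isomorphism $\nu M \simeq M[d]$ for Serre stable $M$, which yields $\Ext^k_A(M,N) \simeq D\Ext^{d-k}_A(N,M)$ for all $k$ and all $N$. Specialising to the Beilinson setting with $d=n$ and $k=0$ gives $\Hom_A(M, S_i) \simeq D\Ext^n_A(S_i, M)$, so it is enough to prove $\Ext^n_A(S_i, M) = 0$ for every $i \geq 1$.

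The main obstacle is therefore the homological bound $\pd_A S_i \leq n-i$, which for $i \geq 1$ is strictly smaller than the global dimension $n$. This is a standard property of the Beilinson algebra: the minimal projective resolution of $S_i$ is the Koszul/Beilinson complex
\[ 0 \to P_n \otimes \Lambda^{n-i} V^{\vee} \to \cdots \to P_{i+1} \otimes V^{\vee} \to P_i \to S_i \to 0, \]
where $V$ denotes the span of the arrows $x_0, \ldots, x_n$; this can be verified directly from the commutativity relations, or by descending induction on $i$ starting from $S_n = P_n$. Once this bound is in hand, $\Ext^n_A(S_i,-) \equiv 0$ for $i \geq 1$, and the argument is complete. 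The remainder is a formal application of Serre duality together with the combinatorial reduction in the first paragraph.
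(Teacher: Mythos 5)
Your proof is correct, but it runs along a different track from the paper's. The paper argues by contraposition at the level of the module itself: if some $\vec{x}_i=\vec{0}$ then $M$ splits as $M_{\leq i}\oplus M_{>i}$, and since $\pd M_{>i}\leq n-i-1<n$ (using $\pd S_j=n-j$), the complex $\nu_n(M_{>i})$ cannot be concentrated in degree $0$, so $M$ is not Serre stable. You instead dualise against the simples: Serre stability gives $\nu M\simeq M[n]$, hence $\Hom_A(M,S_i)\simeq D\Ext^n_A(S_i,M)$, which vanishes for $i\geq 1$ because $\pd S_i=n-i<n$; therefore $\mathrm{top}(M)=S_0$, all $\vec{x}_j$ are non-zero, and the combinatorial description of submodules of a dimension-vector-$\mathbf{1}$ representation (which the paper asserts with ``one readily sees'') yields $\rho$-stability. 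Both arguments rest on the same homological input, the bound $\pd S_i=n-i$, but your deduction avoids the direct-sum decomposition and instead generalises Lemma~\ref{lem:cond} to arbitrary shift parameter, which is a clean and legitimate step (the proof there only uses $\nu M\simeq M[d]$). Your route has the mild advantage of producing the sharper intermediate statement that $M$ is generated at vertex $0$; the paper's is marginally more self-contained in that it never leaves the abelian category except to observe that $\nu_n$ of a summand fails to be a module. Everything checks out, including the identification of ``$S_i$ is a quotient of $M$'' with ``$\vec{x}_{i-1}=\vec{0}$'' and the closure argument showing every proper non-zero submodule has $\rho=1>0$.
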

\begin{proof}
First note that if $S_i$ denotes the simple $B_n$-module at vertex $i$, then a simple downward induction shows $\pd S_i = n-i$. 
Suppose now that $M$ is not $\rho$-stable so in the notation of (\ref{eBeik}) $\vec{x}_i = \vec{0}$ say. Then there exists a direct sum decomposition $M = M_{\leq i} \oplus M_{>i}$ of $B_n$-modules such that $\pd M_{>i} < n$. It follows that $\nu_n(M_{> i})$ is not a module so $M$ cannot be Serre stable. 
\end{proof}

\begin{thm}\label{thm:Beil}
	We have $\X^S_{{\bf 1}, n}=\PP^{n}$. Furthermore, if $\calu$ is the universal representation, then $\nu_n \calu \simeq \omega_{\PP^n} \otimes_{\PP^n} \calu$. 
\end{thm}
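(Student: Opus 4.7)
\emph{Proof proposal.} The natural strategy is to identify both $\X^S_{\mathbf{1},n}$ and the GIT quotient $\X^{\rho}_{\mathbf{1}}$ as the same open substack of $\X_{\mathbf{1}}$, and simultaneously to produce the Serre isomorphism explicitly on a universal family. Proposition~\ref{prop:serreisrho} already gives the inclusion $\X^S_{\mathbf{1},n}\hookrightarrow \X^{\rho}_{\mathbf{1}}\simeq \PP^n$, so the problem reduces to exhibiting an explicit family $\calu$ over $\PP^n$ which is Serre stable: the universal property of the GIT moduli space will then force this inclusion to be an isomorphism and simultaneously identify $\calu$ as the universal Serre stable family.

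I would define $\calu$ as the $(\calo_{\PP^n}\otimes B_n)$-module with component $\calo_{\PP^n}(i)$ at vertex $i$ and arrows $x_j\colon \calo_{\PP^n}(i)\to\calo_{\PP^n}(i+1)$ given by multiplication by the $j$-th homogeneous coordinate. Its fibre at a point $p\in\PP^n$ is manifestly the $\rho$-stable module associated to $p$, so $\calu$ is a flat family of $\rho$-stable $B_n$-modules realising the identity map $\PP^n\to \X^{\rho}_{\mathbf{1}}=\PP^n$. The heart of the proof is then the computation of $\nu_n\calu$, for which I would construct a projective resolution of $\calu$ by $(\calo_{\PP^n}\otimes B_n)$-modules of the form $\mathcal{F}_i\otimes_k P_i$ mimicking the Beilinson resolution of the diagonal on $\PP^n\times\PP^n$; the expected shape is
\[ 0\to \Omega^n_{\PP^n}(n)\otimes_k P_n\to \Omega^{n-1}_{\PP^n}(n-1)\otimes_k P_{n-1}\to\cdots\to \calo_{\PP^n}\otimes_k P_0\to\calu\to 0, \]
with differentials built from Koszul-style contractions involving the tautological arrows. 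Applying $(-)\otimes^{\L}_{B_n}DB_n$ replaces each $P_i$ with the corresponding injective $I_i$, and I would compute the cohomology of the resulting complex of $\calo_{\PP^n}$-modules term by term.

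The main obstacle will be producing this resolution cleanly enough that the complex obtained after $\otimes DB_n$ is exact except in degree $n$, and identifying the sole surviving cohomology with $\omega_{\PP^n}\otimes_{\PP^n}\calu$; the identification $\omega_{\PP^n}\simeq\Omega^n_{\PP^n}$ together with the structure of $I_n$ should make this last step essentially formal once the vanishing is in hand. With this computation in place, $\calu\in \X^S_{\mathbf{1},n}(\PP^n)$ is automatic, the second assertion $\nu_n\calu\simeq\omega_{\PP^n}\otimes_{\PP^n}\calu$ follows directly, and the morphism $\PP^n\to\X^S_{\mathbf{1},n}$ induced by $\calu$ inverts the inclusion from Proposition~\ref{prop:serreisrho}, yielding $\X^S_{\mathbf{1},n}\simeq\PP^n$.
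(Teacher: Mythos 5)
Your proposal follows essentially the same route as the paper: the same family $\calu$ of twists $\calo_{\PP^n}(i)$, the same computation of $\nu_n\calu$ via the Beilinson resolution of the diagonal (the paper's Proposition~\ref{prop:ref}), and the same use of Proposition~\ref{prop:serreisrho} to reduce universality of $\calu$ to the known universal property of $\X^{\rho}_{\bf 1}\simeq\PP^n$. The only step you elide is checking that a $T$-point $(\calm,\theta)$ of $\X^S$ does not depend on the choice of the isomorphism $\theta$; the paper handles this by noting that $\rho$-stability forces every automorphism of $\calm$ to be a global scalar in $H^0(\calo_T^{\times})$, which is then absorbed by rigidification.
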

\begin{proof}
	Consider the following family $\calu$ of $B_n$-modules over $\PP^{n}_{x_0:\dots:x_n}$:
	\[\xymatrix@C=60pt{\calo\ar@/^2pc/[r]|{x_0}\ar@/^1pc/[r]|{x_1}
	\ar@{}[r]|{\vdots}
	\ar@/_2pc/[r]|{x_{n}}
	&\calo(1)
	\ar@/^2pc/[r]|{x_0}\ar@/^1pc/[r]|{x_1}
	\ar@{}[r]|{\vdots}
	\ar@/_2pc/[r]|{x_{n}}
	&\ar@{}[r]|{\dots}&
	\ar@/^2pc/[r]|{x_0}\ar@/^1pc/[r]|{x_1}
	\ar@{}[r]|{\vdots}
	\ar@/_2pc/[r]|{x_{n}}
	&\calo(n)
	}
	\]
Recall that $\calu$ is the universal representation on $\X^{\rho}_{\bf{1}} \simeq \PP^n$ and that its dual $\calt = \calu^{\vee}$ is a tilting bundle on $\PP^n$ with endomorphism ring isomorphic to $B_n$. 

We first show that $\calu$ is Serre stable so picking any isomorphism $\theta:\calu \simeq \omega_{\PP^n}^{-1} \otimes_{\PP^n} \nu_n \calu$ determines a morphism $\PP^n \lm \X^S$. Let $P_i$ denote the indecomposable projective $B_n$-module at vertex $i$ and $I_i$ denote the injective at $i$. We need the following
\begin{prop}\label{prop:ref} 
We have the following exact sequences of $B_n$-modules over $\PP^n$. 
\begin{gather}
0\lm \Omega^{n}(n)\otimes P_n \lm \Omega^{n-1}(n-1)\otimes P_{n-1} \lm \dots \lm
\Omega^1(1)\otimes P_1 \lm \calo_{\PP^{n}}\otimes P_0 \lm \calu\lm 0 \label{eBeiP}\\
0 \lm \omega_{\PP^n} \otimes_{\PP^n} \calu \lm \Omega^{n}(n)\otimes I_n \lm \Omega^{n-1}(n-1)\otimes I_{n-1} \lm \dots \lm
	\Omega^1(1)\otimes I_1 \lm \calo_{\PP^{n}} \otimes I_0  \lm 0 \label{eBeiI}
\end{gather}
Furthermore, the second sequence is, up to twisting by a line bundle, the dual of the left module version of the first sequence. 
\end{prop}
\begin{proof}
We have from \cite{Be}, the following resolution of the diagonal $\Delta\colon
\PP^{n}\to\PP^{n}\times\PP^{n}$:
\[0\lm p^*\Omega^{n}(n)\otimes q^*\O(-n)\lm\dots\lm p^*\Omega^{1}(1) \otimes q^*\O(-1)
\lm \O_{\PP^{n}\times \PP^{n}}\lm \O_\Delta\lm 0\] where
\[\xymatrix{\PP^n\times \PP^n\ar[r]^p\ar[d]^q&\PP^n\ar[d]^f\\
\PP^n\ar[r]^u&{\rm pt}}\]
$p$ is the projection onto the first factor
and $q$ is the projection onto the second. We now apply the functor $p_*\shom(q^*\calt
,-)$
to this exact sequence. Since
\begin{align*}
R^jp_*\shom(q^*\calt, p^*\Omega^i(i)\otimes q^*\O(-i))&=R^jp_*(p^*\Omega^i(i)\otimes q^*\shom(\calt,\calo(-i))) \\
		&=\Omega^i(i)\otimes_{\PP^n}R^jp_*q^*\shom(\calt,\calo(-i))\quad\text{ \cite[Ch. 3,
		Ex. 8.2]{Har}}\\
		&=\Omega^i(i)\otimes_{\PP^n} f^*R^ju_*\shom(\calt,\calo(-i))\quad\text{\cite[Proposition
		9.3]{Har}}\\
		&=\Omega^i(i) \otimes \Ext^j_{\PP^n}(\calt,\calo(-i))\\
		&=
		\begin{cases}
			\Omega^i(i)\otimes P_i&{\rm if}\;j=0\\
			0& \rm otherwise
		\end{cases}
	\end{align*}
we see that the functor preserves the exactness of the Beilinson resolution and
furthermore, transforms it to the exact sequence (\ref{eBeiP}) since 
$p_*\shom(q^*\calt,\calo_{\Delta})\simeq \shom(\calt,\calo)=\calu$.

Since (\ref{eBeiP}) is an exact complex of locally free sheaves, applying $\shom(-, \calo(-1))$ to it gives the following exact sequence of left $B_n$-modules over $\PP^n$
\[ 0 \lm \omega_{\PP^n} \otimes_{\PP^n} \calu \lm I_0'\otimes_k\Omega^{n}(n)  \lm  I_1'\otimes_k\Omega^{n-1}(n-1) \lm\dots\lm I_{n-1}'\otimes_k\Omega^1(1) \lm   I_n'\otimes_k\calo \lm 0 \]
where $I_i'$ is the injective left ${B_n}$-module at vertex $i$. Now $B_n \simeq B^{op}_n$ though the isomorphism switches the idempotent at vertex $i$ with that at vertex $n-i$. Using this isomorphism gives (\ref{eBeiI}). 
\end{proof} 
Serre stability of $\calu$ is now easily verified. Applying $-\otimes_{B_n}DB_n$ to the resolution of $\calu$ in (\ref{eBeiP}) gives the coresolution of $\omega_{\PP^n} \otimes_{\PP^n} \calu$ in (\ref{eBeiI}). Hence $\nu_n(\calu)\simeq \omega_{\PP^n} \otimes_{\PP^n} \calu$ as desired. 

We wish to show that any isomorphism $\calu \xrightarrow{\sim}  \omega_{\PP^n}^{-1} \otimes_{\PP^n} \nu_n \calu$ defines the universal family on $\X^S$. To this end, consider a test scheme $T$ and let $(\calm, \theta) \in \X^S(T)$
	 be given by the family of $B_n$-modules 
	 \[
\calm := \xymatrix@C=60pt{\calo\ar@/^2pc/[r]|{\alpha_0}\ar@/^1pc/[r]|{\alpha_1}
\ar@{}[r]|{\vdots}
\ar@/_2pc/[r]|{\alpha_{n}}
&L_1
\ar@/^2pc/[r]|{\alpha_0}\ar@/^1pc/[r]|{\alpha_1}
\ar@{}[r]|{\vdots}
\ar@/_2pc/[r]|{\alpha_{n}}
&\ar@{}[r]|{\dots}&
\ar@/^2pc/[r]|{\alpha_0}\ar@/^1pc/[r]|{\alpha_1}
\ar@{}[r]|{\vdots}
\ar@/_2pc/[r]|{\alpha_{n}}
&L_n
}
\]  
and isomorphism $\theta:\calm \xrightarrow{\sim} L \otimes_T \nu_n \calm$ where $L, L_i$ are invertible sheaves on $T$. Now $\calu$ is the universal family on $\X^{\rho}_{\bf{1}}$ and $\calm$ is $\rho$-stable by Proposition~\ref{prop:serreisrho}, so there is a unique map $f\colon Y\to \PP^n $ such that $\calm = f^*\calu$. Hence it suffices to show that the $T$-point $(\calm,\theta)$ is independent of $\theta$. Suppose we vary $\theta$ by pre-composing with an automorphism $\psi = (\psi_0,\ldots,\psi_n)$ where $\psi_i \in \Aut L_i = H^0(\calo_T^{\times})$. Now $\rho$-stability implies that the sections $\alpha_1,\dots,\alpha_n$ generate $L_1$ so we see in fact that we must have all the $\psi_i$ are equal and an automorphism $\psi$ of $\calm$ is just given by an element of $H^0(\calo_T^{\times})$. Since we are working in the rigidified moduli stack, varying by $\psi$ does not alter the $T$-point. 
\end{proof} 

This example of the Beilinson algebra exhibits several phenomena arising in the theory of Serre stable moduli stacks that are worth emphasising. The first is that unlike in the theory of quiver GIT, we do not need to pick a separate stability condition, Serre stability already gives $\rho$-stability. Furthermore, we have

\begin{prop}  \label{prop:BeiminCox}
$\vec{d} = \bf 1$ is the unique minimal Coxeter stable dimension vector.
\end{prop}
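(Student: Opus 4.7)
The plan is to transport the question across the derived equivalence $\D^b(\PP^n)\simeq\D^b(B_n)$ induced by the tilting bundle $\calt=\bigoplus_{i=0}^{n}\calo(i)$, and reduce it to a one-line Hilbert polynomial calculation.

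First I would identify the Coxeter transformation $\Phi$ on $K_0(B_n)$, transported to $K_0(\PP^n)$, with multiplication by $[\omega_{\PP^n}]$. The Serre functor on $\PP^n$ is $\omega_{\PP^n}\otimes(-)[n]$, so the shifted Serre functor $\nu_n=\nu\circ[-n]$ acts on $K_0(\PP^n)$ simply as $\alpha\mapsto[\omega_{\PP^n}]\cdot\alpha$ (the two shift signs cancel). Under the same equivalence, the $i$-th entry of the dimension vector attached to a class $\alpha\in K_0(\PP^n)$ is $\chi(\calo(i),\alpha)=\chi(\alpha(-i))$, essentially because $\Rhom(\calt,-)$ decomposes as $\bigoplus_i\Rhom(\calo(i),-)$.

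Next, for any Coxeter stable dimension vector $\vec d\geq\vec 0$, let $\alpha\in K_0(\PP^n)$ be its preimage under the above isomorphism, so $[\omega_{\PP^n}]\cdot\alpha=\alpha$. Writing the Hilbert polynomial $P_\alpha(t):=\chi(\alpha(t))$, which is a polynomial of degree $\leq n$, the fixed-class condition becomes the polynomial identity $P_\alpha(t)=P_\alpha(t-n-1)$. A polynomial invariant under a non-zero shift must be constant, so $P_\alpha\equiv c$ for some $c\in\Z$, and hence $d_i=P_\alpha(-i)=c$ for every $i$, i.e.\ $\vec d=c\cdot\mathbf{1}$.

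Finally, $\vec d\geq\vec 0$ forces $c\geq 0$, so the non-zero Coxeter stable dimension vectors form the chain $\mathbf{1}<2\cdot\mathbf{1}<3\cdot\mathbf{1}<\cdots$, whose unique minimal element is $\mathbf{1}$ (and minimality of $\mathbf{1}$ itself follows from the same classification applied to any $\vec c$ with $\vec 0\leq\vec c\leq\mathbf{1}$). The only step requiring real care is the identification of $\Phi$ with tensoring by $[\omega_{\PP^n}]$ together with the translation of the fixed-class equation into a polynomial identity; once this is in place the rest is just bookkeeping.
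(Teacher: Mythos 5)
Your proposal is correct and follows the same skeleton as the paper's proof: both transport the problem to $K_0(\PP^n)$ via Beilinson's equivalence and identify the shifted Coxeter transformation $\Phi$ with multiplication by $[\omega_{\PP^n}]$ (the shifts cancelling on $K_0$). Where you diverge is in how the fixed subgroup of this operator is computed. The paper writes $\Phi$ in the basis $[\calo],[\calo_H],\ldots,[\calo_{H^n}]$, where it is visibly a single unipotent Jordan block, so its fixed space is the line spanned by the class of a point, whose dimension vector is $\mathbf{1}$. You instead encode a class $\alpha$ by its Hilbert polynomial $P_\alpha(t)=\chi(\alpha(t))$, observe that the entries of the dimension vector are $d_i=P_\alpha(-i)$, and turn the fixed-class equation into the shift-invariance $P_\alpha(t)=P_\alpha(t-n-1)$, which in characteristic zero forces $P_\alpha$ constant. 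Both computations are elementary and give the same answer; yours has the small advantage of not needing to choose a basis, while the paper's Jordan-block formulation also records the finer fact (used implicitly elsewhere) that $1$ is the only eigenvalue of $\Phi$. The only step you leave tacit is that $\mathbf{1}$ actually \emph{is} Coxeter stable, i.e.\ that some fixed class realises the constant polynomial $1$; this is immediate since the class of a point satisfies $[\omega_{\PP^n}\otimes k(p)]=[k(p)]$, but it should be said, as your classification only bounds the set of Coxeter stable vectors from above by $\{c\cdot\mathbf{1}: c\geq 0\}$.
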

\begin{proof}
As in Proposition~\ref{poneismin}, one can readily check this from first principles. The easiest way to see the result is to use Beilinson's derived equivalence or more precisely, the ensuing isomorphism $K_0(B_n) \simeq K_0(\PP^n)$. If $H^i \subset \PP^n$ denotes the intersection of $i$ generic hyperplanes in $\PP^n$, then 
$$ [\calo], [\calo_{H}], [\calo_{H^2}], \ldots , [\calo_{H^n}] $$
is a basis for $K_0(\PP^n)$ and the (shifted) Coxeter transformation $\Phi$ sends $\calo_{H^i}$ to 
$$[\omega_{\PP^n} \otimes_{\PP^n} \calo_{H^i}] = [\calo_{H^i}] - (n+1) [\calo_{H^{i+1}}].$$
Hence $\Phi$, with respect to the basis above is  single Jordan block with eigenvalue 1.
\end{proof}
\noindent
The proposition shows that the choice of dimension vector $\bf 1$ is also the only natural one for Serre stable moduli stacks. Hence, there is much less floppiness in the theory here. 

The universal sheaf $\calu$ on $\X^S$ is the same as for $\X^{\rho}_{\bf 1}$ so its dual is a tilting bundle inducing Beilinson's derived equivalence $\D^b(\coh \PP^1) \simeq \D^b(\mo B_n)$ \cite{Be}. In particular, we see that the Serre stable moduli stack is useful even for algebras which are not quasi-tilted, although in this case, the shift parameter may be different from 1.

\section{Serre functor for cyclic quotient stacks and orders}  \label{sec:serrestacks}

The Serre stable moduli stack can be defined in other settings too as long as there is a suitable notion of a Serre functor in families. For stacks and orders, this can be done using the notion of the canonical (bi)-module. In this section, we collect some basic facts about canonical sheaves and bimodules that we need for the rest of this paper. The Serre stability condition is most illuminating in the case of cyclic quotient stacks, so we examine it in this context. 

Fix a cyclic group $G = \langle \s \rangle$ of order $n$. Let $Y$ be a smooth (quasi-projective) variety. Consider a {\em $G$-cover} of $Y$, which for us will simply mean a cover of the form $\tilde{Y} = \underline{\spec}_Y \tilde{\calo}$ where
$$ \tilde{\calo} = \bigoplus_{\chi \in G^{\vee}} \call_{\chi} $$
and $G$ acts on the line bundle $\call_{\chi}$ by the character $\chi$ of $G$. We will say that $\tilde{Y}$ is {\em 1-generated} if $\tilde{\calo}$ is generated as an $\calo$-algebra by a single eigensheaf, say $\call_{\chi}$. In this case, we will also say $\tilde{Y}$ is {\em $\chi$-generated}. The cyclic covering trick allows us to construct such a $G$-cover. Given any effective divisor $D$ on $Y$ and line bundle $\call_{\chi}$ with an isomorphism $m:\call_{\chi}^n \simeq \calo_Y(-D)$, the algebra $\oplus_{i = 0}^{n-1} \call_{\chi}^i$ defines a $\chi$-generated $G$-cover if we use $m$ to define the multiplication (see \cite[Definition~2.50]{KM}). This $G$-cover is \'etale on the complement of $D$. If $D$ is smooth, then so is $\tilde{Y}$. 

If $\Y$ is a $d$-dimensional smooth Deligne-Mumford stack of finite type, then it has a canonical sheaf $\omega_{\Y}$ and $\nu_d :=\omega_{\Y} \otimes_{\Y} (-)$ is an auto-equivalence of $\coh \Y$ which serves as our shifted Serre functor. 
To understand this functor, we specialise to the case where $\tilde{Y}$ is a smooth $G$-cover of a smooth variety $Y$. Then coherent sheaves on the stack $\Y = [\tilde{Y}/G]$ can be viewed as $G$-equivariant sheaves on $Y$ or equivalently, left modules over the skew group ring $\cala:=\calo_{\tilde{Y}} \# G$, which is a finite sheaf of algebras on $Y$. This gives an equivalence of categories $\coh \Y \simeq \cala\mod$. The canonical sheaf then corresponds to the $G$-equivariant sheaf $\omega_{\tilde{Y}}$. 

We wish to understand $\nu_d$ in this special context by considering the induced auto-equivalence of $\cala\mod$ which we shall also denote by $\nu_d$. 
Let $\calo = \calo_Y, \tilde{\calo} = \calo_{\tilde{Y}}, \w:=\w_{Y}$ and $\tilde \w:=\w_{\tilde Y}$. Recall that the adjunction formula gives
$$ \tilde{\w} = \shom_Y(\tilde{\calo}, \w) \simeq \shom_Y(\tilde{\calo}, \calo) \otimes_Y \w .$$

Consider the trace map ${\rm tr} : k(\tilde{Y}) \lm k(Y)$  where $k(Y),k(\tilde{Y})$ are the function fields  of $Y$ and $\tilde{Y}$. The trace pairing $k(\tilde{Y}) \times k(\tilde{Y}) \lm k(Y):(a,b) \mapsto \rm{tr} (ab)$ is non-degenerate, so we may use it to identify the dual sheaf $\shom_Y(\tilde{\calo}, \calo)$ with a subsheaf of $k(\tilde{Y})$ and hence $\tilde{\w}$ with a subsheaf of $k(\tilde{Y}) \otimes_Y \w$. Now $\cala$ is an order in the matrix $k(Y)$-algebra $\cala \otimes_Y k(Y)$. Following the custom in non-commutative algebraic geometry, we mimic the adjunction formula and define the {\em canonical $\cala$-bimodule} to be 
\[\w_{\cala} = {\mathcal Hom}_Y(\cala,\w) \simeq {\mathcal Hom}_Y(\cala,\calo) \otimes_Y \w.\]
Then $\nu_d = \w_{\cala} \otimes_{\cala} (-): \cala\mod \lm \cala\mod$. This is a well-known fact which follows easily from Lemma~\ref{lem:omega}(ii) below. 
As for $\tilde{\w}$, we may use the (reduced) trace map ${\rm tr}\colon\cala \otimes_Y k(Y) \to k(Y)$ to identify $\Hom_Y(\cala, \calo_Y)$ with a sub-bimodule of $\cala \otimes_Y k(Y)$. Now $\cala \otimes_Y k(Y)$ naturally contains $k(\tilde{Y})$ so $\w_{\cala}$ naturally contains $\tilde{\w}$ too.

\begin{lemma} \label{lem:omega}
Let $\tilde{Y}$ be a $\chi$-generated $G$-cover of $Y$.
\begin{enumerate}
\item Let $\tilde{\w}_{\chi}$ be the eigensheaf (on $Y$) of $\tilde{\w}$ corresponding to the character $\chi$. Then multiplication induces an isomorphism $\tilde{\w} \simeq \tilde{\w}_{\chi} \otimes_Y \calot$.  
\item The bimodule multiplication map induces isomorphisms $\w_{\cala} \simeq \tilde{\w} \otimes_{\tilde{Y}} \cala \simeq \cala \otimes_{\tilde{Y}} \tilde{\w}$. 
\end{enumerate}
\end{lemma}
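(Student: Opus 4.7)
The plan is to prove (i) by a local calculation via the cyclic covering trick, and then deduce (ii) from (i) together with the adjunction formula and the Frobenius structure on $k[G]$.

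For (i), I start from the observation that since $\tilde{Y}$ is smooth, $\tilde{\omega}$ is a line bundle on $\tilde{Y}$, so $\pi_{*}\tilde{\omega}$ is a locally free $\calo_{Y}$-module of rank $n = |G|$. Since we are in characteristic zero, Maschke's theorem splits the $G$-action and gives the eigensheaf decomposition $\pi_{*}\tilde{\omega} = \bigoplus_{j=0}^{n-1} \tilde{\omega}_{\chi^{j}}$ into direct summands; each summand is generically of rank one (because at a point where $\pi$ is \'{e}tale, the fibre of $\pi_{*}\tilde{\omega}$ is the regular representation tensored with a line), hence each $\tilde{\omega}_{\chi^{j}}$ is a line bundle on $Y$. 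Multiplication then gives a $\tilde{\calo}$-linear map $\tilde{\omega}_{\chi} \otimes_{Y} \tilde{\calo} \lm \tilde{\omega}$ between rank-1 locally free $\tilde{\calo}$-modules. To show it is an isomorphism, I work locally: over an affine patch $\spec R \subseteq Y$ where $\call_{\chi}$ is trivialized by a generator $t$ with $t^{n} = f \in R$, we have $\tilde{R} = R[t]/(t^{n} - f)$, and using the trace pairing one identifies $\tilde{\omega}$ locally as the free $\tilde{R}$-module $\frac{1}{n t^{n-1}} \tilde{R} \cdot \omega_{R}$. Since $G$ acts on $t$ by $\chi$, the generator $\omega/(n t^{n-1})$ has weight $\chi^{-(n-1)} = \chi$, so it lies in $\tilde{\omega}_{\chi}$. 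Thus $\tilde{\omega}$ is freely generated as a $\tilde{\calo}$-module by a section of $\tilde{\omega}_{\chi}$, which is exactly (i).

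For (ii), I exploit the defining adjunction $\omega_{\cala} = \shom_{Y}(\cala, \omega_{Y})$. As $\calo_{Y}$-modules we have $\cala = \tilde{\calo} \otimes_{k} k[G]$, so
$$\omega_{\cala} \simeq \shom_{Y}(\tilde{\calo}, \omega_{Y}) \otimes_{k} k[G]^{\vee} \simeq \pi_{*}\tilde{\omega} \otimes_{k} k[G]^{\vee}.$$
Since $G$ is finite, $k[G]$ is a symmetric Frobenius algebra, so $k[G]^{\vee} \simeq k[G]$ as $(k[G], k[G])$-bimodules. Combined with (i), this gives
$$\omega_{\cala} \simeq \pi_{*}\tilde{\omega} \otimes_{k} k[G] \simeq \tilde{\omega} \otimes_{\tilde{\calo}} (\tilde{\calo} \otimes_{k} k[G]) = \tilde{\omega} \otimes_{\tilde{\calo}} \cala,$$
and the other isomorphism $\omega_{\cala} \simeq \cala \otimes_{\tilde{\calo}} \tilde{\omega}$ follows symmetrically.

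The key check, and the place I expect the main difficulty, is verifying that the chain of isomorphisms above respects the full $(\cala, \cala)$-bimodule structure, not merely the $\calo_{Y}$-module structure. This requires tracking carefully how the left and right $\tilde{\calo}$-actions and $G$-actions interact through the Frobenius identification $k[G]^{\vee} \simeq k[G]$; the verification is routine but demands disciplined bookkeeping of the actions. As a conceptual alternative, one could instead appeal to uniqueness of the Serre bimodule: both $\omega_{\cala}$ and $\tilde{\omega} \otimes_{\tilde{\calo}} \cala$ induce the Serre functor $\nu_{d}$ on $\cala\mod \simeq \coh \Y$, and any two bimodules inducing the same functor are isomorphic as bimodules.
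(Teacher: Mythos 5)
Part (i) of your proposal is correct and is essentially the paper's own argument: both compute $\shom_Y(\tilde{\calo},\calo)$ locally via the trace pairing, identify $\tilde{\w}$ with $y^{1-n}\tilde{\calo}\otimes_{\calo}\w$, and observe that the local generator $y^{1-n}$ (your $1/(nt^{n-1})$) has weight $\chi^{1-n}=\chi$.

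Part (ii) takes a genuinely different route from the paper, which embeds $\cala$ into $M_n(\calo)$ via the Peirce decomposition for the idempotents $\eps_\mu$ and computes the dual lattice under the reduced trace pairing explicitly, landing directly on $\cala y^{1-n}$. As written, your version has a real gap exactly at the point you flag and then dismiss as routine. The decomposition $\cala\simeq\tilde{\calo}\otimes_k k[G]$ holds only as $\calo_Y$-modules, not as algebras: in the skew group ring one has $\sigma s=\sigma(s)\sigma$ for $s$ a section of $\tilde{\calo}$, so the left and right $\cala$-actions on $\shom_Y(\cala,\w)$, namely $(a\phi b)(c)=\phi(bca)$, do not factor through the two tensor factors. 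Consequently the isomorphisms $\shom_Y(\cala,\w)\simeq\pi_*\tilde{\w}\otimes_k k[G]^{\vee}$ and $k[G]^{\vee}\simeq k[G]$ cannot simply be tensored together to yield a bimodule isomorphism; the $G$-twisting and the contribution of the different (the factor $y^{1-n}$, which is where ramification enters and which distinguishes $\w_{\cala}$ from $\cala\otimes_{\calo}\w$) must be tracked simultaneously, and that tracking is the entire content of the statement rather than bookkeeping to be deferred. Your proposed fallback via uniqueness of the bimodule inducing a given functor is sound as an Eilenberg--Watts-type argument, but it presupposes that $\w_{\cala}\otimes_{\cala}(-)$ is the Serre functor on $\coh\Y$; in the paper that fact is deduced \emph{from} this lemma, so to avoid circularity you would need to establish it independently, e.g.\ by duality theory for orders or for finite flat morphisms.
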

\begin{proof}
We may work locally over $Y$ and so assume $\calot = \calo[y]/(y^n -f) =  \O\oplus\O y \dots\oplus \O y^{n-1}$ where $G$ acts on $y$ via the character $\chi$. 
It follows that
	\begin{align*}
	\shom_{\O}(\tilde\O,\O)&=\O\oplus \O y^{-1}\oplus\dots\oplus \O y^{1-n}\\
	&=y^{1-n}\tilde\O.
\end{align*}
Hence $\tilde\w=y^{1-n}\tilde\O\otimes_\O\w$ and (i) follows since $\tilde{\w}_{\chi} = y^{1-n}\w$. 

It remains only to show that $\w_{\cala} =y^{1-n}\cala\otimes_{\O}\w$. To verify this, we need a matrix embedding of $\cala$ which can easily be obtained from a Peirce decomposition as follows. Recall $kG=\!\!  \prod\limits_{\mu \in G^{\vee}}\! k\eps_{\mu}$ and
$\eps_{\mu}$ is the primitive idempotent corresponding to the character $\mu$. Then $\cala = {\bigoplus\limits_{\mu,\lambda} \eps_{\mu} \cala \eps_{\lambda}}$ and ordering the elements of $G^{\vee}$ appropriately, we obtain the following algebra homomorphism $\iota: \cala\hookrightarrow M_n(\O)$ which is compatible with the Peirce decomposition. For $g \in \calo$, 
\[
g\mapsto 
\begin{pmatrix}
	g&0&\dots &0\\
	0&g&\dots &0\\
	\vdots&&\ddots\\
	0&0&\dots&g
\end{pmatrix}\quad
\sigma\mapsto
\begin{pmatrix}
	\zeta&0&\dots &0\\
	0&\zeta^2&\dots &0\\
	\vdots&&\ddots\\
	0&0&\dots&\zeta^n
\end{pmatrix}\quad
y\mapsto
\begin{pmatrix}
	0&0&\dots&\dots&f\\
	1&0&\dots&\dots&0\\
	0&1&\dots&\dots&0\\
	0&0&\dots&\ddots&0\\
	0&0&\dots&1&0
\end{pmatrix}
\]
The image of $\iota$ is easily seen to be 
\begin{equation}\label{eqn:Amatrix}
\begin{pmatrix} 
	\calo &  (f) & \dots& (f) \\
	\calo & \calo & &   \vdots \\
	\vdots&\vdots&\ddots& (f) \\
	\calo & \dots & \dots&\calo 
\end{pmatrix}
\end{equation}
The trace pairing is now easily computed, allowing us to identify
\begin{align*}
	\shom_{\O}(\cala,\O)&=\{g\in M_n(k(\O))\mid {\rm tr}(g\cala)=0\}\\
	&=
\begin{pmatrix} 
	\calo &  \calo & \dots&\calo \\
	(f^{-1}) & \calo & &\calo \\
	\vdots&\vdots&\ddots&\vdots\\
	(f^{-1}) & (f^{-1}) & \dots&\calo 
\end{pmatrix}\subset M_n(k(\O))\\
&=\cala
\begin{pmatrix} 
	0 &  \dots & 0&1 \\
	f^{-1} & 0&\dots & 0 \\
	0&f^{-1}&\dots&0\\
	\vdots & &\ddots &0\\
	0&\dots&f^{-1}&0
\end{pmatrix}\\
&=\cala yf^{-1}=\cala y^{1-n}
\end{align*} from which (ii) follows.
\end{proof}

Recall that if $Y$ is a smooth variety, then any skyscraper sheaf $k(p)$ is Serre stable in the sense that $\w_Y \otimes_Y k(p)\simeq k(p)$. This in part motivated Bondal and Orlov's definition of point objects. For stacks, simple sheaves are not necessarily Serre stable. 

\begin{eg}[Serre unstable simple sheaves on a smooth Deligne-Mumford stack.]
\label{eg}

Let $Y= \spec \calo$ be an affine curve and suppose $p \in C$ is a closed point defined by the local
equation $f=0$ where $f \in \calo$. We consider the cover $\tilde{Y}  = \spec \calot$ where 
$\calot = \calo[y]/(y^n -f)$ which is totally ramified over $p$ and unramified elsewhere. 
Hence $\Y = [\tilde{Y}/G]$ is a smooth Deligne-Mumford stack with a single stacky point above $p$ with inertia group $G$ and elsewhere is isomorphic to $Y-p$. 

Let $\mu \in G^{\vee}$ and $\eps_{\mu}\in kG$ be the corresponding primitive idempotent. Then $P_{\mu} = \cala \eps_{\mu}$ is an indecomposable $\cala$-module corresponding to a ``column'' of the matrix form of $\cala$ in (\ref{eqn:Amatrix}) above. The  $P_{\mu}$ are pairwise
non-isomorphic, being the projective covers of non-isomorphic simple modules $S_{\mu} =
P_{\mu}/yP_{\mu}$. Now $P_{\mu}$ is an $(\cala,\calo)$-bimodule which is free as a right
$\calo$-module. We may thus view $P_{\mu}$ as a flat family of $\cala$-modules over $Y$. Furthermore, away from $p$, the corresponding family of sheaves on $\Y$ is just the tautological sheaf $\calo_{\Y}$ on $\Y$. Hence, the $P_{\mu}$ give $n$ flat families of $\cala$-modules over $Y$ which are all isomorphic away from $p$, but different at $p$. Informally speaking, this shows that  the rigidified moduli stack $\X$ (of sheaves on $\Y$ with the same discrete invariant as a generic skyscraper sheaf) is non-separated above $p$ and does not look like $\Y$. 

We now see how these families are Serre unstable. From our local computations in the proof of Lemma~\ref{lem:omega}, we see  
$$\nu_d (P_{\mu}) =\w_{\cala}\otimes_{\cala} P_{\mu} \simeq yP_{\mu}.$$ 
Thus $\nu_d$ permutes the $P_{\mu}$ and $S_{\mu}$ cyclically. The instability of the $P_{\mu}$ is caused by the Serre unstable $\cala$-module $P_{\mu} \otimes_{\calo} k(p)$ which is a non-split extension of all the $S_{\mu}$. As can be expected and will be seen later, in the Serre stable moduli stack, stable
reduction will replace this unstable fibre with the Serre stable module
$\bigoplus\limits_{\mu\in G^\vee} S_{\mu}$. This stable reduction will require passing to the ramified $G$-cover, $\tilde{Y}$ of $Y$. This completes the example.
\end{eg}

\begin{notn}  \label{nkp}
Let $\Y$ be a weighted projective curve with coarse moduli scheme $C$ and $p \in C$ be a closed point. Locally in a neighbourhood of $p$, we may write $\Y = [\tilde{Y}/G]$ in the notation of Example~\ref{eg}. We let $k_{\Y}(p)$ denote the Serre stable sheaf $\bigoplus\limits_{\mu\in G^\vee} S_{\mu}$ on $\Y$. It is the direct sum of $n$ simple sheaves where $n$ is the weight of $\Y$ at $p$. 
\end{notn}

\section{The dual of the universal bundle is tilting}  \label{sTistilting}

In this section, we apply the tilting theory of \cite{BKR}, to give a module+moduli-theoretic criterion for the dual of the universal sheaf on the Serre stable moduli stack to be tilting. This allows us to recover Geigle-Lenzing's derived equivalence \cite{GL} for canonical algebras. 

Let $A$ be a finite dimensional algebra which is basic and {\em connected} in the sense that its quiver is connected. We fix the shift parameter $d=1$ and a minimal Coxeter stable dimension vector $\vec{d}$ and form the corresponding Serre stable moduli stack $\X^S$ and universal sheaf $\mathcal{U}$. We assume that $\text{gl.dim}\ A < \infty$ so that $A$ has a chance of being concealed-canonical. 

We will unfortunately need to assume that we know $\X^S$ is a weighted projective curve. To check this abstractly, it suffices to show it is a smooth proper irreducible Deligne-Mumford stack of finite type over $k$. We saw this moduli-theoretic condition holds for canonical algebras $A$ Theorem~\ref{thm:main}. For concealed canonical algebras, this also should follow from general moduli principles, though we do not have the required stack technology at present to prove this. For example, given any $k$-point $M\simeq \nu_1 M$ of $\X^S$, then (see proof of Theorem~\ref{tBKR} below) $\pd M = 1$ so  $\Ext^2_A(M,M)=0$. Hence at least the corresponding point of $\X$ is smooth.  We know from Proposition~\ref{p:regssforcc} that the minimal Coxeter stable dimension vector is unique, and generically $M$ is regular simple so $\Ext^1_A(M,M) = k$. Hence at least $\X$ is 1-dimensional. Presumably, a closer analysis will show that the same results hold true for $\X^S$. What seems hard to prove is that some form of stable reduction holds and hence that $\X^S$ is proper. One of the goals of ongoing research is to replace our moduli-theoretic assumption here with a module-theoretic one, something which usually occurs in the tilting theory of \cite{BKR}. 

Let $C$ be the coarse moduli scheme of $\X^S$ and using Notation~\ref{nkp} we let 
$$\mathcal{S} = \{ k_{\X^S}(p) | \text{closed } p \in C\} .$$
Let $\mathcal{T} = \mathcal{U}^{\vee}$ which we can view as an $(A,\calo_{\X^S})$-bimodule. We wish of course to show that the functor 
$$F:= \Rhom_{\X^S}(\mathcal{T},-) = 
\mathbf{R}\Gamma( (-) \otimes_{\X^S} \mathcal{U}): \D^b(\X^S) \lm \D^b(A) $$
is a derived equivalence. If $\calm$ is a coherent sheaf on $\X^S$ that is supported on a finite set, then we will abuse notation and write $F \calm = \calm \otimes_{\X^S} \mathcal{U}$. 

It is instructive to describe explicitly the modules $Fk_{\X^S}(p)$. We may compute  $Fk_{\X^S}(p)$ locally in a neighbourhood of $p$ and so assume that $\X^S$ is the cyclic group quotient $[\tilde{Y}/G]$ in the notation of Example~\ref{eg}. In this language, $\mathcal{U}$ is a $G$-equivariant sheaf on $\tilde{Y}$. Let $\tilde{p}$ be the point of $\tilde{Y}$ lying above $p$ which we assume as in Example~\ref{eg} is fixed by $G$. Then $k_{\X^S}(p) = \oplus_{\mu \in G^{\vee}} k(\tilde{p})_{\mu}$ where $k(\tilde{p})_{\mu}$ is the skyscraper sheaf at $\tilde{p}$ with $G$-action given by the character $\mu$, and 
$$F k_{\X^S}(p) = \left(\bigoplus_{\mu \in G^{\vee}} k(\tilde{p})_{\mu} \otimes_{\tilde{Y}} \mathcal{U}\right)^G= k(\tilde{p}) \otimes_{\tilde{Y}} \mathcal{U} $$
where the superscript $G$ denotes $G$-invariants. In particular we see that $F k_{\X^S}(p)$ is a Serre stable $A$-module with dimension vector $\vec{d}$. 

We wish to apply Bridegeland-King-Reid's general criterion for an exact functor to be an equivalence. In our case, the version we need is the following.

\begin{lemma}  \label{lBKR} 
Suppose that $A$ is a basic connected finite dimensional algebra and $\vec{d}$ a dimension vector such that $\X^S$ is a weighted projective curve. Then $F$ is an equivalence of categories provided it induces isomorphisms
$$ F: \Ext^i_{\X^S}(k_{\X^S}(p),k_{\X^S}(p')) \lm 
\Ext^i_A(Fk_{\X^S}(p),Fk_{\X^S}(p')) $$  
for all $i$ and $p, p' \in C$. 
\end{lemma}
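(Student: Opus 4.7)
The plan is to deduce the lemma from the Bridgeland-King-Reid criterion \cite[Theorem~2.4]{BKR}, adapted to the Deligne-Mumford setting, using $\{k_{\X^S}(p) : p \in C\}$ as a spanning class. First I would observe that $F$ has both adjoints: the left adjoint is $G := - \otimes^{\L}_A \calu$, while the right adjoint exists by Grothendieck-Serre duality on the smooth proper stack $\X^S$. Next, $\{k_{\X^S}(p)\}_{p \in C}$ is a spanning class in $\D^b(\X^S)$: on a smooth proper $1$-dimensional DM stack every nonzero coherent sheaf has a nonzero morphism to, or from, some simple sheaf, and each $k_{\X^S}(p)$ contains every simple sheaf supported at $p$ (in the sense of Notation~\ref{nkp}) as a direct summand; the extension to $\D^b$ follows by truncation on the standard $t$-structure.

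The crucial ingredient is to show that $F$ intertwines the Serre functors on the spanning class. Since $\X^S$ is a smooth proper $1$-dimensional DM stack, its Serre functor is $S_{\X^S} = \omega_{\X^S}[1] \otimes_{\X^S} (-)$. The local computation in Lemma~\ref{lem:omega} shows that $\omega_{\X^S}$ permutes the simples $S_\mu$ at a stacky point by twisting with a single character, so $\omega_{\X^S} \otimes k_{\X^S}(p) \simeq k_{\X^S}(p)$, and thus $S_{\X^S}(k_{\X^S}(p)) \simeq k_{\X^S}(p)[1]$. On the algebra side, $S_A = \nu = \nu_1[1]$, and Serre stability of $F k_{\X^S}(p)$ gives $S_A(F k_{\X^S}(p)) \simeq F k_{\X^S}(p)[1]$. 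The universal isomorphism $\calu \xrightarrow{\sim} \call \otimes_{\X^S} \nu_1 \calu$ coming from Proposition~\ref{pXS} provides the intertwiner $F \circ S_{\X^S} \simeq S_A \circ F$ on the spanning class.

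With adjoints, the spanning class, and the hypothesised Ext isomorphisms in hand, the standard BKR argument (Mukai's trick applied to the adjunction counit $GF \to \id$, first on each $k_{\X^S}(p)$ and then propagated through distinguished triangles) shows that $F$ is fully faithful on all of $\D^b(\X^S)$. The essential image is then an admissible, Serre-invariant triangulated subcategory of $\D^b(A)$; its right orthogonal is likewise Serre-invariant, and so would yield a Hom-orthogonal, Serre-closed direct sum decomposition of $\D^b(A)$. As $A$ is basic and connected, no such nontrivial decomposition exists, so the orthogonal vanishes and $F$ is an equivalence.

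The main obstacle I anticipate is the Serre compatibility step: converting the abstract Serre stability datum of Proposition~\ref{pXS} into a natural transformation $F \circ S_{\X^S} \Rightarrow S_A \circ F$ on the spanning class, functorial enough to drive the BKR argument. Carefully tracking the auxiliary line bundle $\call$ against the twist by $\omega_{\X^S}$, and matching the local character permutation from Lemma~\ref{lem:omega} with the cyclic $\nu_d$-action on indecomposable summands established in Proposition~\ref{pisDM}, is the delicate part. A secondary concern is the final connectedness argument; if needed, it can be replaced with a direct verification that each simple $A$-module lies in the essential image of $F$, using the regular simplicity of the $F k_{\X^S}(p)$ guaranteed by Proposition~\ref{prop:kpregsimple} combined with their Serre-shift structure.
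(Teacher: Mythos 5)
Your proposal is correct and follows essentially the same route as the paper: the paper also deduces the lemma directly from \cite[Theorem~2.4]{BKR}, checking that $\{k_{\X^S}(p)\}$ is a spanning class (via the argument of \cite[Example~2.2]{Br}), that connectedness of $A$ gives indecomposability of $\D^b(A)$, and that $F$ commutes with the Serre functors on the spanning class because both $k_{\X^S}(p)$ and $Fk_{\X^S}(p)$ are Serre stable. The only difference is that you unpack the internals of the BKR argument (adjoints, Mukai's trick, the orthogonal-decomposition step) where the paper simply cites the theorem as a black box.
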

\begin{proof}
This is just a special case of \cite[Theorem~2.4]{BKR} and we need only check that the hypotheses there hold. First note that $\mathcal{S} = \{ k_{\X^S}(p) \}$ is a spanning class for $\D^b(\X^S)$ as is easily seen by repeating the proof of \cite[Example~2.2]{Br}. Connectedness of $A$ ensures indecomposability of $\D^b(A)$. Finally, recall that the $k_{\X^S}(p)$ are Serre stable by (\ref{nkp}) and so are the $Fk_{\X^S}(p)$ as we have just observed. Hence $F( \nu k_{\X^S}(p)) \simeq \nu ( Fk_{\X^S}(p))$ where $\nu$ is the Serre functor on $\D^b(\X^S)$ and $\D^b(A)$. 
\end{proof}

\begin{thm}  \label{tBKR} 
Let $A$ be a basic connected finite dimensional algebra of finite global dimension and $\vec{d}\in K_0(A)$ be a minimal Coxeter stable dimension vector. Suppose that
\begin{enumerate}
\item the Serre stable moduli stack $\X^S$ is a weighted projective curve, and
\item any Serre stable module $M$ of dimension vector $\vec{d}$ is regular semisimple.
\end{enumerate}
Then $\Rhom_{\X^S}(\mathcal{T},-): \D^b(\X^S) \lm \D^b(A)$ is an equivalence of categories, where $\mathcal{T}$ is the dual of the universal representation on $\X^S$. 
\end{thm}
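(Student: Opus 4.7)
The plan is to apply Lemma~\ref{lBKR}, reducing the task to verifying that for every pair of closed points $p, p' \in C$ and every $i \geq 0$, the natural map
$$F\colon \Ext^i_{\X^S}(k_{\X^S}(p),k_{\X^S}(p')) \lm \Ext^i_A(Fk_{\X^S}(p),Fk_{\X^S}(p'))$$
is an isomorphism. Both sides vanish for $i \geq 2$: the sheaf side because $\X^S$ is a one-dimensional smooth Deligne-Mumford stack by hypothesis~(i), and the module side because each $Fk_{\X^S}(p)$ is Serre stable, so Lemma~\ref{lem:cond} forces $\Ext^i_A = 0$ for $i \geq 2$. Moreover, $F$ commutes with the Serre functor on the $k_{\X^S}(p)$'s (as observed in the proof of Lemma~\ref{lBKR}), and Serre duality holds on both sides (on the sheaf side because $\X^S$ is a smooth proper weighted projective curve; on the module side by Lemma~\ref{lem:cond}); thus once the $\Hom$ isomorphism is verified for all pairs $(p,p')$, the $\Ext^1$ isomorphism follows formally. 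The task therefore reduces to the $\Hom$ comparison.

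For the off-diagonal case $p \neq p'$, the sheaf-side Hom vanishes by disjoint support. On the module side, hypothesis~(ii) makes both $Fk_{\X^S}(p)$ and $Fk_{\X^S}(p')$ regular semisimple, so by Proposition~\ref{pendreg} the Hom group vanishes provided they share no common indecomposable summand. Suppose to the contrary that they share a summand $N$. Since $\vec{d}$ is minimal Coxeter stable and $\End_A$ of each is semisimple, Proposition~\ref{pisDM}(i) asserts that the indecomposable summands of each form a single $\nu_1$-orbit; the orbit of $N$ therefore exhausts both decompositions, giving $Fk_{\X^S}(p) \simeq Fk_{\X^S}(p')$ as $A$-modules. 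Since $\X^S$ is the Serre stable moduli stack and $Fk_{\X^S}(p)$ is (up to isomorphism) the fibre of the universal representation at the unique closed point of $\X^S$ lying over $p$, the universal property forces these two closed points of $\X^S$ to coincide, and hence $p = p'$, a contradiction.

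For the diagonal case $p = p'$, write $k_{\X^S}(p) = \bigoplus_{\mu \in G_p^\vee} S_\mu$ where $G_p$ is the inertia group at $p$ and $n = |G_p|$. The $S_\mu$ are pairwise non-isomorphic simple sheaves, giving $\Hom_{\X^S}(k_{\X^S}(p), k_{\X^S}(p)) \simeq k^n$. Applying $F$ gives $Fk_{\X^S}(p) = \bigoplus_\mu FS_\mu$, and by the same $\nu_1$-orbit argument as in the off-diagonal case, the $FS_\mu$ are pairwise non-isomorphic indecomposable summands, in fact regular simple; so $\End_A(FS_\mu) = k$ by Proposition~\ref{pendreg} and $\Hom_A(Fk_{\X^S}(p), Fk_{\X^S}(p)) \simeq k^n$. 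Additivity of $F$ then identifies the two copies of $k^n$ via the bijection $S_\mu \leftrightarrow FS_\mu$, completing the verification.

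The main difficulty lies in the off-diagonal argument, specifically showing that distinct closed points of $C$ yield non-isomorphic Serre stable $A$-modules via $F$. This combines the universal property of $\X^S$ with the rigidity afforded by minimality of $\vec{d}$ and regular semisimplicity; without these hypotheses, two distinct points could in principle produce isomorphic $A$-modules, obstructing the matching of indecomposable summands and forcing much more delicate tracking of idempotents.
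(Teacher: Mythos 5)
Most of your reduction tracks the paper's proof: the appeal to Lemma~\ref{lBKR}, the vanishing of $\Ext^i$ for $i\geq 2$, the off-diagonal case via $Fk_{\X^S}(p)\not\simeq Fk_{\X^S}(p')$ (Proposition~\ref{pisDM}) together with Proposition~\ref{pendreg}, and the diagonal $\Hom$ computation identifying two copies of $k^{q_p}$. Note also that the off-diagonal $\Ext^1$ vanishing genuinely is formal, since there one only needs the \emph{target} $\Ext^1_A(M,M')\simeq D\Hom_A(M',\nu_1 M)$ to vanish.

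The gap is your claim that the diagonal $\Ext^1$ isomorphism ``follows formally'' from the $\Hom$ isomorphism via Serre duality and the fact that $F$ commutes with $\nu_1$ on the $k_{\X^S}(p)$. Serre duality tells you that both $\Ext^1_{\X^S}(k_{\X^S}(p),k_{\X^S}(p))$ and $\Ext^1_A(M,M)$ have dimension $q_p$, but a linear map between spaces of equal dimension need not be injective. To deduce injectivity from the $\Hom$-level isomorphism you would need $F$ to intertwine the Serre duality \emph{pairings}, i.e.\ that the trace functional on $\Ext^1_A(M,\nu_1 M)$ pulls back under $F$ to (a functional inducing) the perfect pairing upstairs. $F$ respects composition, but this compatibility of traces is exactly what is not available before one knows $F$ is an equivalence; nothing proved so far rules out $F$ annihilating one of the $q_p$ one-dimensional spaces $\Ext^1_{\X^S}(\nu_1^{i+1}k(\tilde p),\nu_1^{i}k(\tilde p))$. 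This is precisely where the paper does its real work: it constructs $\calo_{2\tilde p}\otimes_{\tilde Y}\mathcal{U}$ explicitly as the non-split Serre stable self-extension $\bar{\mathcal U}=\oplus_{i}\nu_1^iE$ of $M$ (using that the corresponding $k[\eps]$-point of $\X^S$ is non-constant), and then observes that the image under $F$ of a non-split extension of the $\nu_1^ik(\tilde p)$ is a direct summand of the non-split sequence $0\lm M\lm\bar{\mathcal U}\lm M\lm 0$, hence non-split. Some such ``Kodaira--Spencer'' argument, or an independent verification that $F$ respects the trace pairings, is needed to close your proof.
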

\begin{proof}
We need only check the hypothesis of Lemma~\ref{lBKR} concerning isomorphisms of Ext groups. First note that $\coh \X^S$ is hereditary and together with Notation~\ref{nkp} we find 
$$
\Hom_{\X^S}(k_{\X^S}(p), k_{\X^S}(p')) = D\Ext^1_{\X^S}(k_{\X^S}(p'), k_{\X^S}(p)) =
\begin{cases}
k^{q_p} & \text{ if } p=p' \\
0 & \text{else} 
\end{cases}
$$
where $q_p$ is the weight of $\X^S$ at $p$. Let $M = Fk_{\X^S}(p), M' = Fk_{\X^S}(p')$. By Serre duality and stability we know that $\Ext^i_A(M,A) = 0$ if $i \neq 1$. As $\text{gl.dim}\ A < \infty$ we see that the projective dimension of $M$ is 1. In particular $\Ext^i_A(M,M') = 0$ if $i\geq 2$. If $p \neq p'$ then $M \not\simeq M'$ by Proposition~\ref{pisDM} and hence $\Hom_A(M,M') = 0$ by Proposition~\ref{pendreg} and our hypothesis~(ii). Serre duality and stability then shows $\Ext^1_A(M,M') = 0$. Finally, we consider the case $p=p'$. We know from Proposition~\ref{pisDM} that $M$ is a direct sum of $q_p$ regular simple modules and that these form a single $\nu_1$-orbit so say $M = \oplus_{i=0}^{q_p-1} \nu_1^i N$ for some regular simple $N$. In particular, $\End_A M$ is a $q_p$-dimensional vector space as is $\Ext^1_A(M,M)$ by Serre duality. Hence $F$ certainly induces an isomorphism 
$$ F: \Hom_{\X^S}(k_{\X^S}(p),k_{\X^S}(p)) \lm 
\Hom_A(Fk_{\X^S}(p),Fk_{\X^S}(p))$$
We may work locally near $p$ and so assume $\X^S = [\tilde{Y}/G]$ in the notation of Example~\ref{eg} where $G$ is cyclic of order $q_p$ and $k_{\X^S}(p) = \oplus_{i=0}^{q_p-1} \nu_1^i k(\tilde{p})$ for $\tilde{p} \in \tilde{Y}$ the point lying over $p$. 
It remains only to show that $F$ induces a non-zero map
$$ F: \Ext^1_{\X^S}(\nu_1^{i+1}k(\tilde{p}), \nu_1^{i} k(\tilde{p})) \lm 
\Ext^1_A(F\nu_1^{i+1}k(\tilde{p}),F\nu_1^{i}k(\tilde{p})) $$
We compute this ``Kodaira-Spencer'' map explicitly by constructing $\calo_{2\tilde{p}} \otimes_{\tilde{Y}}\mathcal{U}$ as follows. Serre duality gives a unique non-split extension 
$$ 0 \lm \nu_1 N \lm E \lm N \lm 0 .$$
Applying   powers of $\nu_1$ to this exact sequence and taking direct sums gives an extension
\begin{equation} 0 \lm M \lm \bar{\mathcal{U}} \lm M \lm 0
\label{ebarU}\end{equation}
where $\bar{\mathcal{U}} = \oplus_{i=0}^{q_p -1} \nu_1^i E$. This module is a Serre stable self-extension of $M$ so gives a flat family of Serre stable modules over the ring of dual numbers $k[\eps]$. This $k[\eps]$-point of $\X^S$ is given by a morphism $\phi: \spec k[\eps] \lm \tilde{Y}$. Now $\bar{\mathcal{U}}$ is non-split so $\phi$ is not constant and so must give an isomorphism of $k[\eps]$ with $\calo_{2\tilde{p}}$. Hence we may assume that $\calo_{2\tilde{p}} \otimes_{\tilde{Y}}\mathcal{U} \simeq \bar{\mathcal{U}}$. 
Consider now a non-split extension
$$ 0 \lm \nu_1^{i+1} k(\tilde{p}) \lm \calm_i \lm \nu_1^i k(\tilde{p}) \lm 0$$
which represents a non-zero element of $ \Ext^1_{\X^S}(\nu_1^{i+1}k(\tilde{p}), \nu_1^{i} k(\tilde{p})) $. Its image under $F$ is the extension 
$$ 0 \lm \nu_1^{i+1} k(\tilde{p}) \otimes_{\X^S}\bar{\mathcal{U}} \lm \calm_i \otimes_{\X^S}\bar{\mathcal{U}}\lm \nu_1^i k(\tilde{p}) \otimes_{\X^S}\bar{\mathcal{U}} \lm 0$$
which is non-split since it must be one of the $q_p$ direct summands of (\ref{ebarU}).  
\end{proof}

We immediately arrive at an independent proof of (a slight refinement of) Geigle-Lenzing's derived equivalence \cite{GL}. 
\begin{cor}  \label{c:GL}
Let $A$ be a canonical algebra, $\X^S_{\mathbf{1},1}$ be the Serre stable moduli stack and $\calu$ be the universal sheaf. Then $\calt = \calu^{\vee}$ is tilting and induces a derived equivalence between the weighted projective line $\X^S_{\mathbf{1},1}$ and $A$.
\end{cor}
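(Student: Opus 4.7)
The plan is straightforward: the corollary is essentially the assertion that all the hypotheses of Theorem~\ref{tBKR} are satisfied in the canonical algebra setting, so one has only to collect the preceding results and apply the theorem.

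First I would verify that the canonical algebra $A$ meets the general standing hypotheses of Theorem~\ref{tBKR}: it is basic by construction (its quiver has one-dimensional vertices), it is connected since the quiver is connected, and it has finite global dimension — indeed, the defining relations express $x_i^{p_i}$ in terms of $x_1^{p_1}$ and $x_2^{p_2}$ for $i \geq 3$, giving projective resolutions of the simples of bounded length (which is a standard computation but is not strictly needed here, since the global dimension is at most $2$). Next, I would invoke Proposition~\ref{poneismin} to conclude that $\vec{d} = \mathbf{1}$ is a minimal Coxeter stable dimension vector, so that Theorem~\ref{tBKR} is applicable with this dimension vector and shift parameter $d = 1$.

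Then I would verify the two numbered hypotheses of Theorem~\ref{tBKR}. Hypothesis (i), that $\X^S$ is a weighted projective curve, is exactly Theorem~\ref{thm:main}, which produces an explicit isomorphism $\X^S \simeq \Y$ where $\Y$ is the weighted projective line with weights $p_1,\ldots,p_n$ at the prescribed points. Hypothesis (ii), that every Serre stable module of dimension vector $\mathbf{1}$ is regular semisimple, is exactly Proposition~\ref{pcanregsemi}. Having verified both, Theorem~\ref{tBKR} yields that $\Rhom_{\X^S}(\calt,-)\colon \D^b(\X^S) \to \D^b(A)$ is an equivalence of categories, where $\calt = \calu^\vee$ is the dual of the universal representation. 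In particular, $\calt$ is a tilting bundle.

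There is no real obstacle, since the hard work has already been done: the moduli-theoretic identification $\X^S \simeq \Y$ is carried out in Section~\ref{scanonical}, and the module-theoretic regular semisimplicity of $k$-points is established in Proposition~\ref{pcanregsemi}. The one point that is worth stating explicitly is that the derived equivalence one obtains is precisely (a mild refinement of) the Geigle--Lenzing equivalence \cite{GL}, since the tilting object is naturally described as $\calu^\vee$ on $\X^S \simeq \Y$ rather than being postulated in advance.
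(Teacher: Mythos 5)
Your proposal is correct and is essentially the paper's own (implicit) argument: the corollary is stated as an immediate consequence of Theorem~\ref{tBKR}, with hypothesis (i) supplied by Theorem~\ref{thm:main}, hypothesis (ii) by Proposition~\ref{pcanregsemi}, and the minimality of $\vec{d}=\mathbf{1}$ by Proposition~\ref{poneismin}. Nothing further is needed.
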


Theorem~\ref{tBKR} raises some interesting questions. Firstly, can it be used to characterise concealed canonical algebras? Secondly, can it be used to give an independent proof of the following result of Lenzing and de la Pe\~{n}a?

\begin{thm}[Lenzing-de la Pe\~{n}a] \cite[Theorem~1.1]{LdP} 
\label{thm:LdP}
 A finite dimensional $k$-algebra $A$ is concealed-canonical if and only if $\mo A$ has a sincere separating exact subcategory $\mo_0 A$ (see \cite[Section~1]{LdP} for the definition). 
\end{thm}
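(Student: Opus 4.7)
The plan is to derive Theorem~\ref{thm:LdP} directly from Theorem~\ref{tBKR}, treating the two directions separately. The forward implication is essentially built into the original Geigle-Lenzing construction, while the converse is where the new machinery of the Serre stable moduli stack is intended to replace the delicate module-theoretic bookkeeping of \cite{LdP}.

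For the forward direction, suppose $A = \End_{\Y} \calt$ where $\Y$ is a weighted projective line and $\calt$ a tilting bundle. The full subcategory $\coh_0 \Y$ of finite-length sheaves on $\Y$ is a sincere separating exact subcategory of $\coh \Y$, the separation being controlled by the slopes of the indecomposable summands of $\calt$. Applying $\Rhom_{\Y}(\calt,-)$ transports $\coh_0 \Y$ across the derived equivalence $\D^b(\coh \Y) \simeq \D^b(\mo A)$ to a sincere separating exact subcategory of $\mo A$; sincereness comes from the fact that every indecomposable summand $\calt_v$ of $\calt$ admits a non-zero map to some skyscraper-like sheaf on $\Y$.

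For the backward direction, assume $\mo A$ admits a sincere separating exact subcategory $\mo_0 A$. The structural axioms of a separating subcategory single out a canonical minimal Coxeter stable dimension vector $\vec{d}$ (in the non-tubular case it is the unique one; in the tubular case one chooses any minimal member of the $1$-parameter family), and we apply Theorem~\ref{tBKR} with this $\vec{d}$ and shift parameter $d=1$. Hypothesis~(ii) of that theorem, regular semisimplicity of every Serre stable module of dimension vector $\vec{d}$, is exactly the content of Proposition~\ref{p:regssforcc}, which packages the module-theoretic core of Lenzing-de la Pe\~{n}a's argument and needs to be invoked here. Once Hypothesis~(i) is in hand, Theorem~\ref{tBKR} produces a derived equivalence between $A$ and the weighted projective curve $\X^S$, realised by the dual of the universal sheaf; and since $A$ has finite global dimension and a sincere separating subcategory (which forces the generic point of $\X^S$ to be a $\mathbb{P}^1$), this curve must in fact be a weighted projective \emph{line}, exhibiting $A$ as concealed-canonical.

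The principal obstacle is Hypothesis~(i), namely showing that $\X^S$ is a weighted projective curve. The pointwise geometry is under control: for any Serre stable $M$ of dimension vector $\vec{d}$ the projective dimension equals $1$ (as in the proof of Theorem~\ref{tBKR}), giving $\Ext^2_A(M,M)=0$ and smoothness of $\X^S$ at the corresponding geometric point, while minimality of $\vec{d}$ together with Serre duality gives $\dim_k \Ext^1_A(M,M)=1$, so $\X^S$ is $1$-dimensional; connectedness of the quiver of $A$ supplies irreducibility and Deligne-Mumfordness is Proposition~\ref{pisDM}. What is genuinely missing is \emph{properness of $\X^S$}, i.e.\ a stable reduction theorem showing that any flat family of Serre stable modules of dimension vector $\vec{d}$ over a punctured curve extends across the puncture. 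Morally the separating property of $\mo_0 A$ should play the role of such a theorem --- a one-parameter family that degenerates should admit a unique Serre stable replacement built from the tube structure of $\mo_0 A$ --- but carrying this out rigorously requires stack-theoretic technology that the authors explicitly acknowledge is not yet at their disposal. This missing stable reduction step is, as noted in the discussion after Theorem~\ref{tBKR}, precisely what separates the present approach from a genuinely self-contained re-proof of Lenzing-de la Pe\~{n}a.
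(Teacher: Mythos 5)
This theorem is not proved in the paper at all: it is quoted verbatim from Lenzing--de la Pe\~{n}a \cite[Theorem~1.1]{LdP}, and the surrounding text explicitly poses the question of whether the paper's machinery \emph{can} give an independent proof, answering it only partially (Proposition~\ref{p:regssforcc} and the remarks before Lemma~\ref{lBKR}). Your proposal is therefore not being measured against an internal argument but against an external citation, and as a free-standing proof it has a genuine gap which you yourself identify: hypothesis~(i) of Theorem~\ref{tBKR}, the assertion that $\X^S$ is a weighted projective curve, is never established from the sincere separating exact subcategory hypothesis. The local statements you list (smoothness from $\Ext^2_A(M,M)=0$, one-dimensionality from the generic $M$ being regular simple, Deligne--Mumfordness from Proposition~\ref{pisDM}) are all consistent with what the paper sketches, but properness of $\X^S$ --- the stable reduction statement that a family of Serre stable modules over a punctured curve extends --- is exactly the missing technology the authors flag, and without it Theorem~\ref{tBKR} cannot be invoked. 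Acknowledging a gap does not close it; the backward direction of the theorem remains unproved by this route.

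Two further points would need attention even if properness were available. First, Theorem~\ref{tBKR} only delivers a derived equivalence with a weighted projective \emph{curve}; to conclude that $A$ is concealed-canonical you must show the coarse moduli scheme is $\PP^1$, and your parenthetical ``forces the generic point of $\X^S$ to be a $\mathbb{P}^1$'' is asserted rather than argued (the standard route is that existence of a tilting object forces $K_0$ to be finitely generated free, which fails for curves of positive genus, but this should be said). Second, your claim that $\dim_k\Ext^1_A(M,M)=1$ for every Serre stable $M$ of dimension vector $\vec{d}$ is false for the non-generic points: when $M$ is a direct sum of $q_p$ regular simples forming a $\nu_1$-orbit, both $\End_AM$ and $\Ext^1_A(M,M)$ have dimension $q_p$. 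Only the generic statement is true, which suffices for the dimension count but should be stated correctly. Your forward direction and your (non-circular) use of Proposition~\ref{p:regssforcc} for hypothesis~(ii) are fine and match the paper's intent.
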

We will not need the definition of a sincere separating exact subcategory and remark only that $\mo_0 A$ corresponds to the category of finite length sheaves on the associated weighted projective line. Now the category of finite length sheaves is stable under the shifted Serre functor $\omega_{\Y} \otimes_{\Y} (-)$ so $\mo_0 A$ is stable under $\nu_1$.

We now give some partial answers to the questions above. With this aim in mind, we will not assume the reverse implication of Theorem~\ref{thm:LdP} (proved in \cite[Section~6,7]{LdP}) although we will occasionally refer to results in \cite{LdP} which appear earlier in that paper. 

Recall that the rank of a coherent sheaf on a weighted projective line $\Y$ induces a {\em rank function} $\rk: K_0(A) \lm \Z$ where $A$ is any concealed canonical algebra derived equivalent to $\Y$. If $A$ is ``only' assumed to have a sincere separating exact subcategory, then this rank function can be defined independently of the derived equivalence as in \cite[Section~2, (S9)]{LdP}. 

\begin{prop}  \label{p:regssforcc}
 Let $A$ be a concealed canonical algebra. Then 
\begin{enumerate}
 \item $A$ has finite global dimension. 
 \item $K_0(A)$ has a unique minimal Coxeter stable dimension vector $\vec{d}$ of rank 0, 
 \item every Serre stable module with dimension vector $\vec{d}$ is regular semisimple and the generic one is regular simple. 
\end{enumerate}
\end{prop}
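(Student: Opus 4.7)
The strategy is to transport all three statements across the derived equivalence $\mathbf{R}\Hom_\Y(\calt,-)\colon \D^b(\coh\Y) \xrightarrow{\sim} \D^b(\mo A)$ induced by the tilting bundle $\calt$ on the weighted projective line $\Y$ with $A = \End_\Y\calt$. This equivalence intertwines the shifted Nakayama functor $\nu_1$ on $\D^b(\mo A)$ with $\omega_\Y\otimes_\Y(-)$ on $\D^b(\coh\Y)$, so the induced isomorphism $K_0(A)\simeq K_0(\Y)$ identifies the Coxeter transformation $\Phi$ with the action of tensoring by $\omega_\Y$, and identifies the rank function on $K_0(A)$ with the usual rank of coherent sheaves on $\Y$. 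Part (i) is then immediate from tilting theory: $\coh\Y$ is hereditary (since $\Y$ is a smooth one-dimensional Deligne-Mumford stack), so the endomorphism algebra of a tilting object satisfies $\mathrm{gl.dim}\,A \le 2$.

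For (ii), observe that the class $\vec{\delta} := [k_\Y(p)]\in K_0(\Y)$ is independent of $p\in C$ by flat deformation of a moving skyscraper, and is $\omega_\Y$-fixed by Notation~\ref{nkp}. Hence the corresponding $\vec{d}\in K_0(A)$, whose component at vertex $v$ is $\dim\Hom_\Y(\calt_v,k_\Y(p))=\rk(\calt_v)>0$, is Coxeter stable of rank $0$. The rank-$0$ subgroup of $K_0(\Y)$ is the image of $K_0(\coh_0\Y)$; a point-by-point analysis shows it is generated by $\vec{\delta}$ together with the classes $[S_\mu]$ of the simples at each stacky point $q_i$, modulo the single relation $\sum_\mu[S_\mu]=\vec{\delta}$. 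Since $\omega_\Y$ acts trivially in the generic direction and cyclically permutes the $[S_\mu]$ at each stacky point (as in the local computation of Section~\ref{sec:serrestacks}), its fixed subgroup on the rank-$0$ part is exactly $\Z\vec{\delta}$. Hence every Coxeter-stable rank-$0$ dimension vector is an integer multiple of $\vec{d}$; positivity of the components of $\vec{d}$ then forces $\vec{0}\le\vec{c}\le\vec{d}$ to imply $\vec{c}\in\{\vec{0},\vec{d}\}$, yielding both minimality and uniqueness.

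For (iii), let $M$ be Serre stable with dimension vector $\vec{d}$ and let $\calm\in\D^b(\coh\Y)$ be the corresponding complex, so $[\calm]=\vec{\delta}$ and $\omega_\Y\otimes\calm\simeq\calm$. The main step is to show $\calm$ is a finite length sheaf. For any projective $P$ (resp.\ injective $I$), $\nu_1 P = \nu P[-1]$ lives in nonzero homological degree (and symmetrically for $\nu_1 I$), so the condition $\nu_1 M\simeq M$ in $\mo A$ forces $M$ to have no projective or injective summand; hence $\nu_1 M=\tau M$ and $M$ is $\tau$-periodic. The forward direction of Theorem~\ref{thm:LdP}, proved in the early sections of \cite{LdP}, places $\tau$-periodic modules in the separating tubular family $\mo_0 A$, which corresponds under the equivalence to $\coh_0\Y$. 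Hence $\calm$ is a finite length sheaf.

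Decomposing $\calm=\bigoplus_{p'\in C}\calm_{p'}$ by support, at non-stacky $p'$ the local action of $\omega_\Y$ is trivial and imposes no constraint, while at a stacky $q_i$ the local tube analysis (using that $\omega_\Y$ cyclically permutes the indecomposables in each tube without fixed points) shows that the only Serre-stable modules of class a multiple of $\vec{\delta}$ are direct sums of the semisimple $k_\Y(q_i)=\bigoplus_\mu S_\mu$. Matching the class $[\calm]=\vec{\delta}$ then forces $\calm$ to be supported at a single point $p'$ and equal to $k_\Y(p')$. In particular $\calm$ is a direct sum of simple sheaves on $\Y$, so Proposition~\ref{prop:kpregsimple} gives that $M=\mathbf{R}\Hom_\Y(\calt,\calm)$ is a direct sum of regular simples, i.e.\ regular semisimple; when $p'$ is non-stacky (the generic case), $\calm$ is itself simple and $M$ is regular simple. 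The main obstacle is precisely the first step, showing $\calm\in\coh_0\Y$: a self-contained argument avoiding Theorem~\ref{thm:LdP} would need to decompose $\calm$ into indecomposable summands directly in $\D^b(\coh\Y)$ and rule out positive-rank bundle summands using the rank-$0$ class constraint together with the slope structure of $\coh\Y$ relative to $\calt$.
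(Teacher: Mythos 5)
Your proof is correct in substance, but it takes a genuinely different route from the paper's. You prove everything by transporting across the derived equivalence $\Rhom_{\Y}(\calt,-)$ with the weighted projective line --- computing the $\omega_{\Y}$-fixed part of the rank-zero subgroup of $K_0(\Y)$ for (ii), and using the support decomposition of finite-length Serre-stable sheaves plus Proposition~\ref{prop:kpregsimple} for (iii). This is precisely the ``direct'' proof that the paper's Remark~(a) acknowledges and then deliberately declines to give: the paper instead proves the proposition assuming only that $A$ has a sincere separating exact subcategory, citing the axioms (S6), (S8), (S9), (S12) and Corollary~4.3 of \cite{LdP}. The point of that choice is that the result is then available under a formally weaker, purely module-theoretic hypothesis, which is what the program of independently re-deriving the Lenzing--de la Pe\~{n}a characterisation requires; your version buys concreteness and a cleaner picture of where the stacky points and tubes come from, at the cost of presupposing the tilting bundle. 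Two smaller points. First, your reduction of (iii) to $\calm\in\coh_0\Y$ via ``$\tau$-periodic modules lie in the separating tubular family'' again imports structure theory from \cite{LdP}; as you note yourself, the cleaner self-contained route is the one the paper uses in Lemma~\ref{lem:heartwtdline}: since $\coh\Y$ is hereditary, $M\otimes^{\L}_A\calt\simeq F_1[1]\oplus F_0$ with both summands Serre stable, and minimality of $\vec{d}$ together with the rank-zero class forces $F_1=0$ and $F_0$ of finite length. Second, your local statement that the only Serre-stable torsion sheaves at a stacky point of class a \emph{multiple} of $\vec{\delta}$ are sums of $k_{\Y}(q_i)$ is not quite right (a full $\tau$-orbit of length-$\ell$ indecomposables is Serre stable of class $\ell\vec{\delta}$); what you actually need and use is only the case of class exactly $\vec{\delta}$, where the conclusion is correct. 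Neither of these affects the validity of the argument.
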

\noindent
\textbf{Remark} a) One can prove this directly quite easily (use Proposition~\ref{prop:kpregsimple} for part (iii)). We will prove this however assuming ``only'' that $A$ has a sincere separating exact subcategory $\mo_0 A$. 

\noindent 
b) If $A$ is {\em tubular} then there may be Coxeter stable dimension vectors of non-zero rank. However, this does not occur if $A$ is not tubular (see for example the proof of Proposition~\ref{poneismin}). 
\begin{proof}
(i) This is \cite[Section~2, (S6)(iii)]{LdP}.

 (ii) Let $\Phi: K_0(A) \lm K_0(A)$ be the Coxeter transformation. We know from \cite[Corollary~4.3]{LdP} that 1 is an eigenvalue of $\Phi$ of algebraic multiplicity 2. Now $\Phi$ commutes with the rank function \cite[Section~2, (S9)]{LdP}, so its restriction to $\ker \left(\rk: K_0(A) \lm \Z\right)$ still has 1 as an eigenvalue, but now with multiplicity 1. 

(iii) Let $M$ be a Serre stable module with dimension vector $\vec{d}$. Part (i) and \cite[Section~3, (S12)]{LdP} show that $-\langle \vec{d}, - \rangle$ is the rank function. Furthermore, we know from  \cite[Section~2, (S9)]{LdP}, that every submodule $N$ of $M$ satisfies $\langle \vec{d}, [N] \rangle \leq 0$ with equality if and only if it lies in $\mo_0 A$. From \cite[Section~2, (S8)]{LdP}, we know that $\mo_0 A$ is a coproduct of connected uniserial subcategories and that the $\nu_1$-orbits of the simple objects in $\mo_0 A$ are all finite. Minimality of $\vec{d}$ shows that every indecomposable summand of $M$ must be simple in $\mo_0 A$ so $M$ is regular semisimple. Finally, \cite[Section~2 (S8)(iii)]{LdP} ensures that up to isomorphism, all but finitely many $M$ are in fact regular simple.
\end{proof}

\section{The tautological moduli problem}\label{sec:taut}

 In this section, we introduce the class of {\em smoothly weighted varieties} which generalises the notion of weighted projective curves. We show that for such stacks, they can be recovered as a tautological Serre stable moduli stack of sheaves. This explains why the Serre stable moduli stack is useful, whenever we have an abelian category derived equivalent to such a stack. 

A stack $\Y$ is called a {\em smoothly weighted (projective) variety} if there is a morphism $\pi: \Y \lm Y$ to a (projective) variety $Y$ such that every point $p \in Y$ has an open neighbourhood $U$ and the restricted map $\pi_{\pi^{-1}(U)}: \pi^{-1}(U) \lm U$ has the form $[\tilde{U}/G] \lm U$ for some   1-generated cyclic cover $\tilde{U}/U$ (defined Section~\ref{sec:serrestacks}) with Galois group $G$ and $\tilde{U}$ smooth. Note smoothness of $\tilde{U}$ implies smoothness of the ramification locus. Also, $\pi$ is an isomorphism away from the ramification loci of the covers $\tilde{U}/U$. The coarse moduli scheme of $\Y$ is $Y$. By construction, weighted projective curves are examples of smoothly weighted projective varieties. The simplest examples of Geigle-Lenzing projective spaces \cite{HIMO} as described below are another. 

\begin{eg}[Geigle-Lenzing projective space]\label{eg:GLspace}
Consider the graded polynomial algebra $R = k[x_0,\ldots,x_n]$ where $\deg x_0 = 1$ but $\deg x_i= p$ for $i>0$ and some integer $p > 1$. We may consider the stacky proj $\Y$ of this which can be described as follows. The $\Z$-grading on $R$ amounts to an action of  $k^{\times}$ on $\mathbb{A}^{n+1}$ and we define the {\em GL-projective space weighted on a hyperplane with weight $p$} to be $\Y = [\left(\mathbb{A}^n - 0\right)/k^{\times}]$. There is a map to the scheme-theoretic proj, $\proj R \simeq \proj k[t_0 = x_0^p,x_1,\ldots,x_n] = \PP^n$.  As in the appendix, 
we have a local description of this map $\Y \lm \PP^n$. Above the affine patch $t_0 \neq 0$, $\Y$ is just $\spec R/(x_0 - 1) \simeq \mathbb{A}^n$. Above the affine patch $x_i\neq0,  i>0$, it is $[\left(\spec R/(x_i -1)\right)/\mu_p]$ so the inertia groups along $t_0 = 0$ are $p$ and $\Y$ is a smoothly weighted projective variety. 
\end{eg}

In the theory of moduli of sheaves, we need to fix a discrete invariant, which is trickier than for $A$-modules because $K_0(\Y)$ may no longer be discrete. If $\Y$ is a projective scheme, the usual approach is to polarise the scheme with a very ample line bundle and use the Hilbert polynomial as the discrete invariant. If the coarse moduli scheme $Y$ for $\Y$ is projective, we can do the same as then $\coh \Y \simeq \cala \mo$ for some order $\cala$ on $Y$. However, the discrete invariant we are interested in is the one corresponding to any skyscraper sheaf $k(p)$ where $p$ is a point on the scheme locus of $\Y$. For this special case, the {\em moduli stack $\tilde{\W}$ of skyscraper sheaves} can be defined directly as follows. Given a test scheme $T$, the category of $T$-points $\tilde{\W}(T)$ consists of sheaves $\calm$ on $\Y \times T$ which are flat over $T$ and furthermore, locally free of rank 1 over $T$, that is, $\text{Supp}\ \calm$ is finite over $T$ and its push forward to $T$ is a line bundle. As in Subsection~\ref{subsec:rigid}, we will rigidify this stack to obtain the rigidified moduli stack $\W$. 

On an open patch of form $\mathbb{U} = [\tilde{U}/G]\subseteq \Y$ we may identify $\coh \Y$ with $\cala \mod$ where $\cala = \calo_{\tilde{U}} \# G$. Now $\cala$ is an order over $U$ which is Azumaya away from the ramification locus. Above an unramified point $p \in Y$, the skyscraper sheaf $k(p)$ on $\Y$ corresponds to the unique simple $\cala$-module $M_p$ supported at $p$. This module has dimension $|G|$ over $k(p)$ and in fact is naturally a $k(p) G$-module which is isomorphic to $k(p)G$. In other words, each irreducible character occurs with multiplicity 1 in $M_p$. Suppose now that $R$ is a commutative ring and $\calm \in \W(R)$, viewed as a flat family of $\cala$-modules over $R$. If $\eps_{\mu}\in kG$ is the primitive idempotent corresponding to the character $\mu \in G^{\vee}$, then $\eps_{\mu} \calm$ is a locally free $R$-module of rank 1. 

We fix the shift parameter to be $d = \dim \Y$ which in our case is just $\dim Y$. 
Recall from Section~\ref{sec:serrestacks} that we have a (shifted) Serre functor $\nu_d := \w_{\Y}
\otimes_{\Y} (-)$. This definition naturally generalises to families for given a flat family $\calm
\in \W(R)$, $\nu_d(\calm) = \w_{\Y} \otimes_{\Y} \calm$ defines a flat family of skyscraper sheaves on $\Y$. Since we have assumed that $\Y$ is smooth, $\nu_d$ defines an automorphism on the whole of $\W$ and we do not need to worry about its domain of definition as occurs for moduli of $A$-modules. We can now define Serre stable sheaves as before, as well as the Serre stable moduli stack $\W^S$. More precisely, $\W^S$ is the stackification of the 
pre-stack $\W^{S,pre}$ whose objects over a test scheme $T$ are flat families $\calm$ of skyscraper sheaves on $\Y$
over $T$, together with an isomorphism $ \nu_d \calm \xrightarrow{\sim} \calm \otimes_T \caln$ 
such that $\calm,\caln$ are line bundles over $T$. 

Before proving the next result, it will be useful to keep in mind that if $\Y = Y$ is actually a variety, then $Y \simeq \W$ and the isomorphism is given by the universal skyscraper sheaf $\calo_{\Delta} \in \coh Y \times Y$ where $\Delta \subset Y \times Y$ is the diagonal copy of $Y$. If $Y = \spec \calo$ is affine, then the inverse map $\Psi$ is easily defined as follows. Consider a flat family $\calm \in \W(R)$ over $R$ which we view as an $(\calo, R)$-bimodule which is locally free rank 1 over $R$. Left multiplication induces a ring homomorphism $\calo \lm \End_R \calm = R$ and hence an $R$-point $\spec R \lm Y$ which we define to be $\Psi(\calm)$. The proof below generalises this argument. 

\begin{lemma}\label{lemma:order}
Let $\tilde{U}/U$ be a $\chi$-generated $G$-cover of a smooth affine variety ramified over a smooth divisor and $\Y = [\tilde{U}/G]$ be the corresponding quotient stack. Then we have an isomorphism $\W^S \simeq \Y$. 
\end{lemma}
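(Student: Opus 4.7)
The plan is to construct mutually quasi-inverse morphisms $\Phi\colon \Y \to \W^S$ and $\Psi\colon \W^S \to \Y$, generalizing the explicit inverse $\Psi$ sketched for schemes just before the lemma. Under the equivalence $\coh \Y \simeq \cala\mod$ with $\cala = \tilde\calo \# G$, the natural universal family is $\cala$ viewed as an $(\cala,\tilde\calo)$-bimodule via left and right multiplication. Since $\cala$ is free of rank $|G|$ over $\tilde\calo$, this is a flat family of sheaves on $\Y$ parameterized by $\tilde U$; its fibre at an unramified $\tilde p \in \tilde U$ is the unique simple $\cala$-module supported at the image of $\tilde p$, while at a ramification point it is $k_\Y(p) = \bigoplus_\mu S_\mu$ (as in Example~\ref{eg}). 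Right multiplication by $G \subset \cala$ makes the family $G$-equivariant, so it descends to $\calu \in \W(\Y)$, and Lemma~\ref{lem:omega}(ii) gives
\[ \nu_d \calu = \w_\cala \otimes_\cala \cala \simeq \cala \otimes_{\tilde\calo} \tilde\w \simeq \calu \otimes_\Y \tilde\w, \]
endowing $\calu$ with a canonical Serre stability datum and defining $\Phi$.

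For the inverse, take $(\calm, \theta) \in \W^S(T)$ and work affine-locally, $T = \spec R$. Decompose $\calm = \bigoplus_\mu L_\mu$ by character, each $L_\mu = \eps_\mu\calm$ being an $R$-line bundle. Because $\calo \subset \cala$ is central, its action on each $L_\mu$ factors through $\End_R L_\mu = R$, producing a ring homomorphism $\calo \to R$ and hence a morphism $\bar f\colon T \to U$; this directly extends the scheme-theoretic argument recalled before the lemma. Next, the eigenspace-shifting action $y_\mu\colon L_\mu \to L_{\mu\chi}$ of $y\in\tilde\calo_\chi$, combined with the Serre stability $\theta$ and the splitting $\tilde\w \simeq \tilde\w_\chi \otimes_\calo \tilde\calo$ from Lemma~\ref{lem:omega}(i), identifies the $y_\mu$ with a single global section $\eta$ of a specific line bundle $\calp$ on $T$ built out of $\caln$ and $\bar f^*\tilde\w_\chi$, satisfying $\eta^n = \bar f^\#(f)$ in accordance with the cover's defining relation $y^n = f$. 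The cyclic covering trick applied to $(\calp, \eta)$ then produces a $G$-torsor $\tilde T \to T$ together with a $G$-equivariant map $\tilde T \to \tilde U$, defining $\Psi(\calm,\theta)\colon T \to [\tilde U/G] = \Y$.

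Finally, $\Phi\circ\Psi \simeq \id_{\W^S}$ is tautological since $\Psi$ is designed so that $\calu$ pulls back to $\calm$, while $\Psi\circ\Phi \simeq \id_\Y$ is checked by applying $\Psi$ to $\calu$ itself and recognizing that the resulting $(\bar f, \eta)$ data reproduce the identity presentation of $\Y = [\tilde U/G]$. The main obstacle is the second paragraph: tracking character shifts precisely so that the local maps $y_\mu$ glue to a single $\eta$, verifying that $\Psi$ is independent of the choice of $\theta$ within its equivalence class (so it descends from the pre-stack $\W^{S,pre}$ to $\W^S$), and correctly computing the line bundle $\calp$ together with an identification $\calp^{\otimes n} \simeq \bar f^*\calo_U$ (up to canonical-sheaf contributions) that makes the cyclic covering trick produce the correct $G$-torsor.
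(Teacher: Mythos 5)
Your proposal follows essentially the same route as the paper: $\Phi$ is given by the bimodule ${}_{\cala}\cala_{\calot}$ with $G$-equivariance from the right $\calot\#G$-action and Serre stability supplied by Lemma~\ref{lem:omega}(ii), while $\Psi$ is built from the character decomposition $\calm=\bigoplus_\mu \eps_\mu\calm$, the homomorphism $\calo\to R$ coming from the central action, and the $n$-torsion line bundle extracted from $\theta$. For the obstacle you flag in your second paragraph, the paper's device is to define the cover intrinsically as the $R$-subalgebra $\tilde{R}\subseteq\End_R\calm$ of endomorphisms compatible with $\theta$ (then identifying $\tilde{R}\simeq\bigoplus_{i=0}^{n-1}\call^{i}$ with $\call$ the torsion bundle, and obtaining the $G$-equivariant map to $\tilde{U}$ from left multiplication by $\calot$), which makes independence of the representative $\theta$ within its equivalence class and functoriality in $\calm$ automatic, rather than assembling the pair $(\calp,\eta)$ by hand and invoking the cyclic covering trick.
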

\begin{proof}
We fix the usual notation: $U = \spec \calo, \tilde{U} = \spec
\calot$ and $\cala = \calot \# G$ is the skew group ring so there is a category equivalence $\coh\Y\simeq \cala\mod$. We will view a flat family of $\cala$-modules over a commutative ring $R$ as an $(\cala,R)$-bimodule to help us keep track of scalars. In particular, if $\eps_{\mu}\in kG$ is the primitive idempotent corresponding to $\mu \in G^{\vee} = \langle \chi \rangle$ and $\calm \in \W(R)$, then $\calm$ is a left $\cala$-module such that $\eps_{\mu}\calm$ is a locally free right $R$-module of rank 1.

We define a morphism of stacks $\Phi\colon \Y \to \W^S$ by defining the universal family. First note that $\mathcal{U} =
_{\cala} \cala_{\calot}$ is a family of left $\cala$-modules over $\calot$ and furthermore, $\eps_{\mu} \mathcal{U} \simeq \calot$ as a right $\calot$-module. This flat family is also
$G$-equivariant since $\mathcal{U}$ is an $(\cala, \calot \# G)$-bimodule. Hence $\mathcal{U}$ defines a $G$-equivariant element of $\W(\calot)$, that is a flat family of $\cala$-modules over $\Y$. It corresponds to $\calo_{\Delta}$ above. Note $\mathcal{U}$ is also Serre stable. Indeed we see from Lemma~\ref{lem:omega} that there is a natural isomorphism of $(\cala,\calot)$-bimodules 
$$\w_{\cala} \otimes_{\cala} \mathcal{U} = \w_{\cala} \simeq
\cala \otimes_{\calot} \tilde{\w} = \mathcal{U} \otimes_{\calot} \tilde{\w}$$
We have thus defined an element of $\W^S(\calot)$. Moreover, all data are $G$-equivariant so we do indeed
have a morphism $\Y = [\spec \calot/G] \to \W^S$. 


We now construct the inverse functor $\Psi$ to $\Phi$ and by stackification, it suffices to define
$\Psi\colon \W^{S,pre} \to \Y$. Consider an object of $\W^{S,pre}(R)$ given by an $(\cala,R)$-bimodule $
_{\cala}\calm_R$ and $(\cala,R)$-bimodule isomorphism $\theta: \w_{\cala} \otimes_{\cala} \calm \xrightarrow{\sim} \calm \otimes_R \caln$ where $\caln$ is an invertible $R$-module. Since $\cala = \calot \# G$, we may view the $\cala$-module structure of $\calm$ as a $G^{\vee}$-grading 
$$\calm = \bigoplus_{\lambda \in G^{\vee}}  \eps_{\lambda} \calm $$
together with a compatible action of $\calot$, that is,  a $G^{\vee}$-graded algebra homomorphism $\calot \lm \End_R \calm$. This follows from the more precise formula in $\calot \# G$: given  a degree $\lambda$ element $s$ of $\calot$ and $\mu \in G^{\vee}$ we have $s \eps_{\mu} = \eps_{\mu + \lambda} s$. 

The existence of the isomorphism $\theta$ imposes the following structure on $\calm$.

\begin{sublemma}  \label{sublem:calm}
\begin{enumerate}
\item There is a homomorphism $\rho: \calo \lm R$ such that left multiplication by $s \in \calo$ on $\calm$ is right multiplication by $\rho(s)$. 
\item There is an invertible $R$-module $\call$ such that $\theta$ induces an isomorphism $\theta_{\mu}: \eps_{\mu + \chi} \calm \xlm{\sim} \eps_{\mu} \calm \otimes_R \call$.
\item $\theta$ also induces an isomorphism $\call^n \simeq R$.
\end{enumerate}
\end{sublemma}
\begin{proof}
Let $\tilde{\w}_{\chi}$ be the eigensheaf of $\tilde{\w}$ corresponding to the character $\chi$ so that by Lemma~\ref{lem:omega}, $\theta$ can be a viewed as an isomorphism $\theta: \tilde{\w}_{\chi} \otimes_{\calo} \calm \xlm{\sim} \calm \otimes_R \caln$. Multiplying by $\eps_{\mu}$ and re-arranging gives an isomorphism
\begin{equation}
\eps_{\mu + \chi} \calm \simeq \tilde{\w}_{\chi}^{-1} \otimes_{\calo} \eps_{\mu}\calm \otimes_R \caln.
\label{epsi}
\end{equation} 
For each $\mu \in G^{\vee}$, left multiplication induces a ring homomorphism $\rho_{\mu}: \calo \lm \End_R \eps_{\mu} \calm = R$. Since (\ref{epsi}) is an  $(\calo,R)$-bimodule isomorphism, we see $\rho_{\mu}$ is independent of the choice of $\mu$ and part~i) follows. We may thus rewrite (\ref{epsi}) as 
$$\eps_{\mu + \chi} \calm \simeq \eps_{\mu}\calm \otimes_R \caln \otimes_R \mathcal{P}.$$
where $\mathcal{P} = \tilde{\w}_{\chi}^{-1} \otimes_{\calo} R$. Now $\chi$ has order $n$ so the invertible module $\caln \otimes_R \mathcal{P}$ is $n$-torsion. Setting $\call = \caln \otimes_R \mathcal{P}$ finishes the proof of the sublemma. 
\end{proof}
We wish now to define $\Psi(\calm)$ which consists of an \'etale $G$-cover $\tilde{R}$ of $R$ and a $G$-equivariant homomorphism $\calot \lm \tilde{R}$. Needless to say, $\tilde{R}$ will be given by the $n$-torsion line bundle $\call$ in the sublemma, but to make this functorial, we proceed as follows. Note first that by Sublemma~\ref{sublem:calm}i), any right $R$-linear endomorphism of $\calm$ is automatically left $\calo$-linear too.  We let $\tilde{R}$ be the $R$-subalgebra of $\End_R \calm$ consisting of endomorphisms $\phi$ compatible with $\theta$, that is, $(\phi \otimes \id_{\caln}) \theta = \theta
(\id_{\tilde{\w}_{\chi}} \otimes \phi)$ as morphisms from $\tilde{\w}_{\chi} \otimes_{\calo} \calm \lm \calm \otimes_R \caln$. Note first that the construction of $\tilde R$ does not depend on the choice of isomorphism $\theta$ in its equivalence class. Now $\tilde{\w}$ is an $\calot$-central bimodule containing $\tilde{\w}_{\chi}$ so left multiplication by elements of $\calot$ is certainly compatible with $\theta$. We thus obtain a $G^{\vee}$-graded homomorphism $\calot \lm \tilde{R}$ or equivalently, a $G$-equivariant homomorphism. 

We now show that $\tilde{R}$ is isomorphic to the commutative  $G^{\vee}$-graded $R$-algebra $\oplus_{i = 0}^{n-1} \call^i$ and so is indeed an \'etale $G$-cover of $R$, thus completing the definition of $\Psi$. First note that a degree $\lambda$ endomorphism $\phi \in \End_R \calm$ is compatible with $\theta$ if the following diagram commutes for all $\mu \in G^{\vee}$
$$
\begin{CD}
 \eps_{\mu + \chi} \calm @>{\theta_{\mu}}>> \eps_{\mu} \calm \otimes_R \call \\
@V{\phi}VV @V{\phi \otimes \id}VV \\
 \eps_{\mu +\lambda +  \chi} \calm @>{\theta_{\mu + \lambda}}>> \eps_{\mu + \lambda} \calm \otimes_R \call
\end{CD}$$
We now easily see that multiplication by elements of $\call$ via $\theta_{\mu}^{-1}$ are endomorphisms of $\calm$ compatible with $\theta$. There is hence a $G^{\vee}$-graded $R$-algebra homomorphism $\oplus_{i = 0}^{n-1} \call^i \lm \tilde{R}$. 
This homomorphism is clearly injective and is also surjective since the commutative diagram above shows that any homogeneous $r \in \tilde{R}$ is completely determined by its restriction to $\eps_0 \calm$ which is an element of $\Hom_R(\eps_0 \calm, \eps_{i\chi} \calm) \simeq \call^i$ where $\deg r = i\chi$. This completes the definition of $\Psi(\calm)$ which is clearly seen to be functorial in $\calm$. 

The construction of $\Psi(\calm)$ is completely reversible so $\Psi$ does give an equivalence of stacks. We will illustrate this for the ``tautological point'' $\pi: \tilde{U} \lm \Y$ and so see that the inverse of $\Psi$ is indeed given by the morphism $\Phi: \Y \lm \W^S$ induced by the universal family $\mathcal{U} = _{\cala}\cala_{\calot}$ described above. 

To this end, first note that the tautological point is given by the trivial $G$-torsor $pr: G
\times \tilde{U} \lm \tilde{U}$ and $G$-equivariant map $\alpha: G \times \tilde{U} \lm \tilde{U}$
given by the $G$-action. We determine $\calm:= \Psi^{-1}(\pi)$. Now the trivial $G$-torsor is
given by the trivial line bundle $\call = \calot$ so we have $\caln \otimes_{\calo}
\tilde{\w}_{\chi}^{-1} = \calot$ or equivalently by Lemma~\ref{lem:omega}, $\caln = \tilde{\w}_{\chi}
\otimes_{\calo} \calot= \tilde{\w}$. Since we are working in the rigidified moduli stack $\W$, we
may assume that $\eps_0 \calm = \calot$.  Reversing equation (\ref{epsi}), we define $\eps_{i\chi}
\calm = \tilde{\w}_{\chi}^{-i} \otimes_{\calo} \calot \otimes_{\calot} \tilde{\w}^i \simeq \calot$.
Setting $\calm = \oplus_i \eps_{i\chi} \calm = \oplus_{\mu \in \G^{\vee}} \eps_{\mu} \calot$, we may
sum the isomorphisms in (\ref{epsi}) to give an isomorphism $\theta: \tilde{\w}_{\chi} \otimes_{\calo}
\calm  \simeq \calm \otimes_{\calot} \tilde{\w}$. Note that $\calm$ is defined as a
$G^{\vee}$-graded right $\calot$-module or equivalently, a $(kG,\calot)$-bimodule which is naturally
isomorphic to $\mathcal{U} = \calot \# G$. To complete the left $\cala$-module structure, we need a
compatible left $\calot$-module structure which is provided by the $G^{\vee}$-graded homomorphism $\alpha^*: \calot \lm \calo_{G \times \tilde{U}} = \calot G^{\vee}$ where $\calot G^{\vee} = \oplus_{\mu \in G^{\vee}} \calot u_{\mu}$ is the usual group ring on the dual group $G^{\vee}$ (so $u_{\mu}u_{\lambda} = u_{\mu + \lambda}$). The endomorphisms of $\calm$ which are compatible with $\theta$ can be identified with $\calot G^{\vee}$ and under this identification, $u_{\mu}$ permutes the graded components by $\eps_{\lambda} \mapsto \eps_{\mu + \lambda}$. We view $\calot$ as a $kG$-module so  for $f \in \calot$ and writing ``.'' for scalar multiplication to avoid confusion, we have $\eps_{\mu}. f$ is the projection of $f$ onto the $\mu$-eigensheaf of $\calot$. In this language, $\alpha^* f = \sum_{\mu\in G^{\vee}} (\eps_{\mu}.f) u_{\mu}$. It follows that if $f$ is homogeneous of degree $\mu$, then $\alpha^* f = f u_{\mu}$ so left multiplication by $f$ induces the map $\eps_{\lambda} \calot \lm \eps_{\mu + \lambda} \calm: \eps_{\lambda} g \mapsto \eps_{\mu + \lambda} gf$. Hence $\calm \simeq \mathcal{U}$ as $(\cala,\calot)$-bimodules. This completes the proof that $\Psi^{-1} = \Phi$. 
\end{proof}

\begin{thm}\label{thm:order}
Let $\Y$ be a smoothly weighted variety. Then there is a natural isomorphism $\W^S\simeq \Y$.
\end{thm}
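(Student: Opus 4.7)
The plan is to reduce the global statement to the local case established in Lemma~\ref{lemma:order} by gluing along an open cover of $\Y$ arising from the smoothly weighted structure. By definition there is a cover $\Y = \bigcup_\alpha \mathbb{U}_\alpha$ with each $\mathbb{U}_\alpha = [\tilde U_\alpha/G_\alpha]$ a $\chi_\alpha$-generated cyclic quotient, and for each such chart Lemma~\ref{lemma:order} provides an isomorphism $\Phi_\alpha \colon \mathbb{U}_\alpha \xrightarrow{\sim} \W^S_{\mathbb{U}_\alpha}$ with the Serre stable moduli stack of skyscraper sheaves on $\mathbb{U}_\alpha$.

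First I would define a global morphism $\Phi \colon \Y \to \W^S$ using the structure sheaf $\calo_\Delta$ of the diagonal $\Delta \subset \Y \times \Y$, viewed as a flat family of skyscraper sheaves on the first factor parameterised by the second. The required Serre-stability datum is the natural isomorphism $\w_\Y \otimes_\Y \calo_\Delta \simeq \calo_\Delta \otimes_\Y \w_\Y$; that this genuinely defines an object of $\W^S(\Y)$ can be checked locally on $\mathbb{U}_\alpha \times \mathbb{U}_\alpha$, where it reduces to the explicit universal family ${}_{\cala}\cala_{\calot}$ constructed in the proof of Lemma~\ref{lemma:order}. By construction, the restriction of $\Phi$ to each $\mathbb{U}_\alpha$ agrees with $\Phi_\alpha$.

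To produce an inverse and complete the argument, I would use the fact that any flat family of skyscraper sheaves on $\Y$ has support finite over the base: given $\calm \in \W^S(T)$ and $t \in T$, the fibre $\calm|_t$ is supported at finitely many points of the coarse moduli scheme $Y$, so after shrinking $T$ to an affine neighbourhood of $t$ the support of $\calm$ lies within a single chart $\mathbb{U}_\alpha$. Lemma~\ref{lemma:order} then produces a unique morphism $T \to \mathbb{U}_\alpha \hookrightarrow \Y$. The step I expect to be the most delicate is verifying that this morphism is independent of the chosen chart: if the support also lies in $\mathbb{U}_\beta$, then both induced morphisms must factor through $\mathbb{U}_\alpha \cap \mathbb{U}_\beta$ and must agree there. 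This should follow because the inverse $\Psi$ constructed in the proof of Lemma~\ref{lemma:order} is intrinsic --- the \'etale cyclic cover $\tilde R$ of $R$ is defined as the subalgebra of $\End_R \calm$ of endomorphisms compatible with $\theta$, a description independent of the choice of $\chi$-generator. Hence the local morphisms glue to a global $\Psi \colon \W^S \to \Y$ inverse to $\Phi$ on each chart, and therefore globally.
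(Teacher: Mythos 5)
Your proposal is correct and follows essentially the same route as the paper: the paper's proof is exactly ``patch the local isomorphisms of Lemma~\ref{lemma:order}, noting that the forward map is globally defined by $\calo_\Delta$ with its natural isomorphism $\w_\Y \otimes_\Y \calo_\Delta \simeq \calo_\Delta \otimes_\Y \w_\Y$.'' You merely supply more detail than the paper does on why the local inverses $\Psi_\alpha$ agree on overlaps (via the intrinsic description of $\tilde R$ inside $\End_R\calm$ and the finiteness of supports), which is a legitimate filling-in rather than a different argument.
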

\begin{proof}
This follows from patching the isomorphisms in the lemma. The easiest way to see that the isomorphisms do indeed glue together is to note that $\Phi: \Y \lm \W^S$ is defined by the universal family $\mathcal{U} = \calo_{\Delta}$ and natural isomorphism $\w_{\Y} \otimes_{\Y} \calo_{\Delta} \simeq \calo_{\Delta} \otimes_{\Y} \w_{\Y}$. 
\end{proof}

Suppose $Y$ is the coarse moduli scheme of the smoothly weighted variety $\Y$. If $\Y$ is weighted along some divisor $D \subset Y$ with weight $p > 1$, then every point $y \in D$ corresponds to a $k$-point of $\W^S$ of the form $\mathcal{F}  \oplus  \nu_d \mathcal{F} \oplus \ldots \oplus \nu_d^{p-1} \mathcal{F}$ for some simple sheaf $\mathcal{F}$. Viewing this as a point of $\W$, we see its rigidified automorphism group in $\W$ is $(k^{\times})^{p-1}$, so there is no chance that $\W$ recovers the stack $\Y$. Similarly, if in our definition of smoothly weighted varieties, we had allowed weighting along intersecting divisors, one would find that the rigidified automorphism groups in $\W^S$ are sometimes infinite, so $\W^S$ will not recover $\Y$ in these more general cases. We suspect that there are ``higher'' versions of the Serre stable moduli stack which will work, perhaps involving the cotangent bundle instead of just the canonical bundle.  

\section{The dual of the tilting bundle is universal}\label{sec:universal}

Let $\Y$ be a smoothly weighted projective variety of dimension $d$ and $\calt= \oplus_{v \in Q_0} \calt_v$ be a tilting bundle on $\Y$ where the $\calt_v$ are the indecomposable summands. We will assume $\calt$ is {\em basic} whereby we mean the $\calt_v$ are non-isomorphic so the endomorphism algebra $A = \End_{\Y} \calt$ is also basic and the quiver corresponding to $A$ has $Q_0$ as its vertices. We consider the dimension vector $d: Q_0 \lm \mathbb{N}: v \mapsto \rk \calt_v$ so that the generic skyscraper sheaf on $\Y$ corresponds to an $A$-module of dimension vector $\vec{d}$. We may construct the rigidified moduli stack $\X$ of $A$-modules with dimension vector $\vec{d}$. We fix the shift parameter to be $d$. The aim of this section is to prove that, at least in the (anti-)Fano case defined below, the Serre stable moduli stack $\X^S$ is isomorphic to $\Y$ and that the universal module is given by $\calt^{\vee}$. 

We first recall from \cite[Section~5]{K2} how to calculate the derived tensor product $\mathcal{F} \otimes^\L_A M$ of an $(\calo_{\Y},A)$-bimodule $\mathcal{F}$ with a left $A$-module $M$. We can take a projective $A$-bimodule resolution of $A$ which has the form $P_{\bullet}$ where each $P_q$ is a finite direct sum of bimodules of the form $Ae_v \otimes_k e_w A$ for various $v,w \in Q_0$. Then 
\begin{equation}
\mathcal{F} \otimes^\L_A M = \mathcal{F} \otimes_A P_{\bullet} \otimes_A M .
\label{eq:derivedtensor}
\end{equation}
Indeed, $\mathcal{F} \otimes_A Ae_v \otimes_k e_w A = \mathcal{F}_v \otimes_k e_w A$, so $\mathcal{F} \otimes_A P_{\bullet} $ can be taken to be a projective resolution of $\mathcal{F}$ as an $A$-module. Of course, a similar statement can be made about $P_{\bullet} \otimes_A M$. We let $\D^+(\Y,A)$ denote the bounded below derived category of quasi-coherent $(\calo_{\Y},A)$-bimodules. Below, we use the derived global sections functor $\mathbf{R}\Gamma: \D^+(\Y)  \lm \D^+(k)$ on $\Y$. We need a preliminary lemma.

\begin{lemma}\label{lem:tensorh0commute}
Let $\mathcal{F} \in \D^+(\Y,A)$ and $M \in \D^b_{fg}(A^{op})$. Then 
$$\mathbf{R}\Gamma (\mathcal{F}) \otimes^\L_A M \simeq 
\mathbf{R}\Gamma (\mathcal{F} \otimes^\L_A M).$$
\end{lemma}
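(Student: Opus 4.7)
The plan is to prove the lemma by resolving $M$ and using the observation that $\mathbf{R}\Gamma$ commutes with right multiplication by the idempotents $e_v \in A$. First I would pick a resolution $P^\bullet \to M$ of $M$ by finitely generated projective left $A$-modules, each term of which is a finite direct sum of modules of the form $Ae_v$ for $v \in Q_0$. Since in our applications $A$ is the endomorphism algebra of a tilting bundle on a smooth stack and hence has finite global dimension, $P^\bullet$ can be taken bounded. The terms of $P^\bullet$ are flat as left $A$-modules, so
$$\mathcal{F} \otimes^\L_A M \simeq \mathcal{F} \otimes_A P^\bullet \quad\text{and}\quad \mathbf{R}\Gamma(\mathcal{F}) \otimes^\L_A M \simeq \mathbf{R}\Gamma(\mathcal{F}) \otimes_A P^\bullet.$$

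The key computation is then on each summand. For any $(\calo_{\Y},A)$-bimodule $\mathcal{G}$ and each vertex $v$, we have $\mathcal{G} \otimes_A Ae_v = \mathcal{G} e_v$, which is the $\calo_{\Y}$-direct summand of $\mathcal{G}$ cut out by the idempotent $e_v$. Because $\mathbf{R}\Gamma$ is an additive functor that commutes with finite direct sums and in particular with projection onto direct summands,
$$\mathbf{R}\Gamma(\mathcal{F} \otimes_A Ae_v) = \mathbf{R}\Gamma(\mathcal{F} e_v) = \mathbf{R}\Gamma(\mathcal{F}) e_v = \mathbf{R}\Gamma(\mathcal{F}) \otimes_A Ae_v,$$
and the analogous identity holds for finite direct sums of such modules. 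Applying this termwise to $P^\bullet$ and passing to the total complex yields the desired natural isomorphism $\mathbf{R}\Gamma(\mathcal{F} \otimes_A P^\bullet) \simeq \mathbf{R}\Gamma(\mathcal{F}) \otimes_A P^\bullet$.

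The main technical obstacle is ensuring that $\mathbf{R}\Gamma$ applied to the bimodule complex $\mathcal{F} \otimes_A P^\bullet$ can be computed entrywise compatibly with the right $A$-module structure on the output. I would handle this by replacing $\mathcal{F}$ with a K-injective resolution in the category of $(\calo_{\Y},A)$-bimodules whose underlying $\calo_{\Y}$-complex remains $\Gamma$-acyclic (for instance a Cartan--Eilenberg injective resolution whose $\calo_{\Y}$-components are injective, which exists because $\Y$ is a nice Deligne--Mumford stack). Boundedness of $P^\bullet$ then ensures that the double-complex spectral sequence computing $\mathbf{R}\Gamma(\mathcal{F} \otimes_A P^\bullet)$ converges without issue, and naturality of all the identifications in $\mathcal{F}$ gives the natural isomorphism in $\mathcal{F}$ as required.
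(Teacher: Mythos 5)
Your proof is correct and takes essentially the same approach as the paper's: replace $M$ by a bounded complex of finite projectives built from the $Ae_v$, replace $\mathcal{F}$ by a bounded-below complex of injective $(\calo_{\Y},A)$-bimodules, and use that taking global sections commutes with $-\otimes_A Ae_v$. The extra care you take about $\Gamma$-acyclicity of the underlying $\calo_{\Y}$-complex is a point the paper leaves implicit; it holds because restriction of scalars along $\calo_{\Y}\to\calo_{\Y}\otimes_k A$ has an exact left adjoint and hence preserves injectives.
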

\begin{proof}
We may replace $\mathcal{F}$ with a bounded below complex of injective $(\calo_{\Y},A)$-bimodules and $M$ with a bounded complex of finite projective left $A$-modules. Since taking global sections of an $(\calo_{\Y},A)$-bimodule commutes with $- \otimes_A Ae_v$, the lemma follows. 
\end{proof}
We now show that $\calt,\calt^{\vee}$ are Serre stable left (respectively right) $A$-modules.

\begin{prop}\label{prop:tiltstable}
There are natural isomorphisms of bimodules \[  DA[-1] \otimes^{\L}_A \calt \simeq
\calt\otimes_{\Y}\omega_{\Y} ,\quad 
\calt^{\vee} \otimes^{\L}_A  DA[-1]\simeq 
 \omega_{\Y} \otimes_{\Y}\calt^\vee.\] In particular, $\calt^{\vee}$ is Serre stable.
\end{prop}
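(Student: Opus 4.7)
My plan is to derive the first bimodule isomorphism from Serre duality on $\Y$ combined with the tilting equivalence, deduce the second by a mirror argument, and then read off the Serre stability of $\calt^{\vee}$ directly from the second.

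Since $\calt$ is tilting, the higher $\Ext$ groups vanish and one has a bimodule identification $A \simeq \RG(\calt^{\vee}\otimes_{\Y}\calt)$, together with the derived equivalence $F = \Rhom_{\Y}(\calt,-) = \RG(\calt^{\vee}\otimes_{\Y}-)$ with quasi-inverse $G = -\otimes^{\L}_{A}\calt$. Smoothness and properness of $\Y$ afford Serre duality in its functorial form $D\Rhom_{\Y}(\mathcal{F},\mathcal{G}) \simeq \Rhom_{\Y}(\mathcal{G}, \mathcal{F}\otimes_{\Y}\omega_{\Y})[d]$. Applied with $\mathcal{F}=\mathcal{G}=\calt$, this yields the bimodule identification $DA[-d] \simeq F(\calt\otimes_{\Y}\omega_{\Y})$. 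Applying $G$ to both sides and using $GF\simeq\id$ now gives
\[ DA[-d]\otimes^{\L}_{A}\calt \;=\; G(DA[-d]) \;\simeq\; GF(\calt\otimes_{\Y}\omega_{\Y}) \;\simeq\; \calt\otimes_{\Y}\omega_{\Y}, \]
which is the first displayed isomorphism (with the shift $[-d]$ dictated by the Serre functor on $\Y$). Equivalently one can compute $DA[-d]\otimes^{\L}_{A}\calt$ directly by first writing $DA[-d]\simeq \RG(\calt\otimes_{\Y}\omega_{\Y}\otimes_{\Y}\calt^{\vee})$ as bimodules and then invoking Lemma~\ref{lem:tensorh0commute} to move $\otimes^{\L}_{A}\calt$ inside $\RG$.

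The second isomorphism is the mirror statement for the $A$-action on the opposite side, and I would derive it either by running the symmetric argument with $\calt^{\vee}$ playing the role of $\calt$ (since $\calt^{\vee}$ is tilting in the opposite direction of the equivalence), or by dualising the first isomorphism via $\shom_{\Y}(-,\omega_{\Y})$ together with biduality. For the final claim, view $\calt^{\vee}$ as a flat family of $A$-modules of dimension vector $\vec{d}$ over the base $\Y$: the second isomorphism then reads $\nu_{d}\calt^{\vee} = \calt^{\vee}\otimes^{\L}_{A}DA[-d] \simeq \omega_{\Y}\otimes_{\Y}\calt^{\vee}$, which is exactly the Serre stability condition of Proposition~\ref{pXS} with associated line bundle $\call = \omega_{\Y}^{-1}$ on $\Y$.

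The main obstacle will be bookkeeping: tracking which factor of $\calt$ or $\calt^{\vee}$ carries which $A$-action through Serre duality and the tilting equivalence, so that every identification above is genuinely of $(\calo_{\Y},A)$- or $(A,\calo_{\Y})$-bimodules and not merely of underlying complexes on $\Y$. Each step is natural in the $\calt$ variable, so this is essentially routine; the only non-formal input is Serre duality for the smooth proper Deligne--Mumford stack $\Y$, which is standard but worth invoking explicitly.
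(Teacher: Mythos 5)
Your argument is correct and is essentially the paper's: both isomorphisms are deduced from the fact that the tilting equivalence (equivalently $\Rhom_{\Y}(\calt,-)$) intertwines the Serre functors of $\D^b(\Y)$ and $\D^b(A)$, with Lemma~\ref{lem:tensorh0commute} used to pass $\otimes^{\L}_A DA$ inside $\RG$, and the bimodule bookkeeping you flag is handled in the paper by evaluating the resulting natural isomorphism on $\calo_U$ for varying open $U \subset \Y$. (Your shift $[-d]$ is the intended one: the $[-1]$ in the statement should be read as $[-\dim \Y]$, consistent with the shift parameter fixed in Section~\ref{sec:universal}.)
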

\begin{proof}
	Note that $-\otimes_A^{\L}\calt\colon\D^b(A)\to\D^b(\Y)$ is a category equivalence which
	commutes with the Serre functor and so:\[-\otimes_A^{\L}DA[-1]\otimes_A^{\L}\calt\simeq
	-\otimes_A^{\L}\calt\otimes_{\Y}\omega_{\Y}\] from which the first isomorphism follows.
Similarly, $\Rhom_{\Y}(\calt,-)$ commutes with the Serre functor and, using Lemma \ref{lem:tensorh0commute} we see there are natural isomorphisms
\[\RG(-\otimes^\L_{\Y} \omega_{\Y} \otimes^\L_{\Y} \calt^{\vee})  \simeq
\RG(- \otimes^\L_{\Y} \calt^{\vee}) \otimes^\L_A DA[-1] \simeq 
\RG(- \otimes^\L_{\Y} \calt^{\vee} \otimes^\L_A DA[-1])
\]
Applying this functor to $\calo_U$ where $U \subset \Y$ varies over open subsets of $\Y$ show that there is an isomorphism of $(\calo_{\Y},A)$-bimodules $\omega_{\Y} \otimes^\L_{\Y} \calt^{\vee} \simeq \calt^{\vee} \otimes^\L_A DA[-1]$.
\end{proof}

We wish to show that the derived equivalence $- \otimes^\L_A \calt$ sends flat families of Serre stable $A$-modules of dimension vector $\vec{d}$ to flat families of skyscraper sheaves on $\Y$. This will allow us to use Theorem~\ref{thm:order}. 
Our first task is to show that $M \otimes^{\L}_A \calt$ is a sheaf for any $k$-point $M$ of $\X^S$. As in \cite{BO}, the theory of point objects is most useful when $\Y$ is {\em (anti-)Fano} whereby we mean that $\omega_{\Y}$ (respectively $\omega_{\Y}^{-1})$ is ample. Below, we let $Y$ denote the coarse moduli space of $\Y$ and the support of coherent sheaves on $\Y$ will mean their supports as closed subsets of $Y$. 

\begin{lemma}\label{lem:heartfano}
Suppose that $\Y$ is Fano or anti-Fano. Then $M \otimes^{\L}_A \calt$ is a finite length sheaf for any $k$-point $M$ of $\X^S$.
\end{lemma}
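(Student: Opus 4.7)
The plan is to transport Serre stability of $M$ across the derived tilting equivalence $-\otimes^{\L}_A \calt \colon \D^b(A) \to \D^b(\Y)$ and then combine it with the (anti-)Fano hypothesis. Set $X = M \otimes^{\L}_A \calt \in \D^b(\Y)$. First I would use Proposition~\ref{prop:tiltstable} together with the Serre-compatibility of the tilting equivalence to translate the Serre stability $\nu_d M \simeq M$ into an isomorphism $\omega_{\Y} \otimes_{\Y} X \simeq X$ in $\D^b(\Y)$. Since tensoring by the line bundle $\omega_{\Y}$ is exact, this passes to each cohomology sheaf, giving $\omega_{\Y} \otimes_{\Y} \mathcal{H}^i(X) \simeq \mathcal{H}^i(X)$ for every $i$.

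Next I would invoke the (anti-)Fano hypothesis to deduce that each $\mathcal{H}^i(X)$ is a finite length sheaf. Using the ample line bundle $\omega_{\Y}^{\pm 1}$ as a polarization, the Hilbert polynomial $P(n) = \chi(\Y, \mathcal{H}^i(X) \otimes \omega_{\Y}^{\pm n})$ has degree equal to $\dim \operatorname{Supp} \mathcal{H}^i(X)$. Iterating the $\omega_{\Y}$-invariance makes $P(n)$ the constant $\chi(\mathcal{H}^i(X))$, forcing the support to be $0$-dimensional and hence $\mathcal{H}^i(X)$ finite length. Because $\Y$ is a smoothly weighted projective variety with projective coarse moduli $Y$, I would justify the Hilbert polynomial formalism by passing to a sufficiently divisible power of $\omega_{\Y}^{\pm 1}$ that descends to an ample bundle on $Y$ and arguing there via the projection formula.

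Finally I would show $X$ is concentrated in degree $0$ via the hypercohomology spectral sequence
\[ E_2^{p,q} = \Ext^p_{\Y}(\calt, \mathcal{H}^q(X)) \Longrightarrow \mathcal{H}^{p+q}(\Rhom_{\Y}(\calt, X)) = \mathcal{H}^{p+q}(M). \]
Because $\calt$ is locally free and $\mathcal{H}^q(X)$ is finite length, $\Ext^p_{\Y}(\calt, \mathcal{H}^q(X)) = H^p(\Y, \calt^{\vee} \otimes \mathcal{H}^q(X))$ vanishes for $p > 0$ (the tensor product is supported on finitely many points), so the spectral sequence collapses to $\Hom_{\Y}(\calt, \mathcal{H}^q(X)) \simeq \mathcal{H}^q(M)$. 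Since $M$ is concentrated in degree $0$, this forces $\Hom_{\Y}(\calt, \mathcal{H}^q(X)) = 0$ for $q \neq 0$, hence $\Rhom_{\Y}(\calt, \mathcal{H}^q(X)) = 0$, hence $\mathcal{H}^q(X) = 0$ because $\calt$ is a tilting generator of $\D^b(\Y)$. The step I expect to be the main obstacle is the second one: the Hilbert polynomial / ampleness apparatus on the stack $\Y$ requires some care, but the reduction to the coarse moduli scheme outlined above should make this routine.
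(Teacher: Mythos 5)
Your proposal is correct, and the first two steps coincide with the paper's: both transport Serre stability through the tilting equivalence via Proposition~\ref{prop:tiltstable} to get $\omega_{\Y}\otimes_{\Y}\mathcal{H}^i(X)\simeq\mathcal{H}^i(X)$, and both use ampleness of $\omega_{\Y}^{\pm 1}$ to force each cohomology sheaf to have zero-dimensional support (the paper merely asserts this; your Hilbert-polynomial argument on the coarse moduli scheme is the intended justification and is spelled out more carefully than in the paper). Where you genuinely diverge is in proving that $X=M\otimes^{\L}_A\calt$ is concentrated in degree $0$. The paper first notes that Serre stability forces $\pd_A M=d$, represents $X$ by a length-$d$ complex of locally free sheaves, and then works locally over the coarse space, viewing the terms as modules over an order $\cala$ on a local ring: by Auslander--Buchsbaum the cokernel of $\delta^{-j}$ has depth at least $j$, hence admits no nonzero finite-length submodule for $j>0$, killing the intermediate cohomology by downward induction. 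You instead run the hypercohomology spectral sequence $E_2^{p,q}=\Ext^p_{\Y}(\calt,\mathcal{H}^q(X))\Rightarrow\mathcal{H}^{p+q}(M)$, observe that higher $\Ext$ from the locally free $\calt$ into a finite-length sheaf vanishes, conclude $\Hom_{\Y}(\calt,\mathcal{H}^q(X))\simeq\mathcal{H}^q(M)=0$ for $q\neq 0$, and finish by conservativity of $\Rhom_{\Y}(\calt,-)$. Both arguments are valid. Yours is more categorical: it avoids the reduction to local orders and the depth computation entirely, and does not even need the observation that $\pd_A M=d$; its only cost is the (routine, but stack-level) vanishing of higher cohomology of finite-length sheaves on $\Y$, which follows from exactness of pushforward to the coarse space in characteristic zero. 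The paper's argument is more local-algebraic and exhibits the vanishing as a depth phenomenon, which is perhaps more portable to settings where one does not yet know $\Rhom_{\Y}(\calt,-)$ is an equivalence; but since the tilting hypothesis is in force throughout Section~\ref{sec:universal}, your route is legitimate here.
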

\begin{proof}
Note that $A$ has finite global dimension so Serre stability of $M$ means that $\Ext^i_A(M,A) = D\Ext^{d-i}_A(A,M)$ and hence $\pd_A M = d$. Choosing a length $d$ projective resolution for $M$, we may assume that $F = M \otimes^{\L}_A \calt$ is a length $d$ complex 
$$ 0 \lm F^{-d} \xrightarrow{\delta^{-d+1}} F^{-d+1} \lm \ldots \lm F^0 \lm 0 $$
of locally free sheaves on $\Y$.  From Proposition~\ref{prop:tiltstable}, its cohomologies $H^{-j}$ are Serre stable and hence have finite support since we are assuming that $\Y$ is either Fano or anti-Fano. We will show by downward induction on $j$ that $H^{-j} = 0$ for $j>0$. Since the cohomologies are finite length sheaves, we may work locally on $Y$, in which case the sheaves $F^{-j}$ can be viewed as $\cala$-modules where $\cala$ is an $\oy$-order as in Section~\ref{sec:serrestacks} and $\oy$ is a local ring. Assuming cohomologies vanish in degrees $<-j$, then the $\oy$-module $C^{-j} = \coker\left(\delta^{-j}:F^{-j-1} \lm F^{-j}\right)$ has projective dimension $\leq d-j$. By Auslander-Buchsbaum, $\text{depth}_R C^{-j} \geq j$ so $C^{-j}$ has no non-zero finite length submodules unless $j = 0$. 
\end{proof}

\begin{lemma}\label{lem:heartwtdline}
Suppose that $\Y$ is a weighted projective line and $\vec{d}$ is minimal Coxeter stable. Then $M \otimes^{\L}_A \calt$ is a finite length sheaf for any $k$-point $M$ of $\X^S$.
\end{lemma}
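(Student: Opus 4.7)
The plan is to adapt the strategy of Lemma~\ref{lem:heartfano} to the $1$-dimensional setting, where the Fano/anti-Fano dichotomy may fail (notably in the tubular case where $\omega_\Y$ is a non-trivial torsion line bundle), by exploiting the derived equivalence induced by $\calt$ together with the minimality hypothesis on $\vec{d}$ as a replacement for the rank-based Auslander--Buchsbaum argument.

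As in Lemma~\ref{lem:heartfano}, Serre stability of $M$ combined with finite global dimension of $A$ forces $\pd_A M = 1$, so $F := M \otimes^{\L}_A \calt$ is represented by a two-term complex $F^{-1} \to F^0$ of vector bundles on $\Y$, with $H^{-1}$ locally free (as a subsheaf of $F^{-1}$ on a smooth curve) and each cohomology sheaf Serre stable, i.e.\ $H^i \otimes_\Y \omega_\Y \simeq H^i$, by Proposition~\ref{prop:tiltstable}. The key step is to apply $\Rhom_\Y(\calt,-)$ to the canonical distinguished triangle $H^{-1}[1] \to F \to H^0 \to H^{-1}[2]$ and, using that $\coh \Y$ has global dimension $1$ and that $M = \Rhom_\Y(\calt, F)$ is concentrated in degree $0$, extract the four-term exact sequence
\begin{equation*}
0 \lm \Ext^1_\Y(\calt, H^{-1}) \lm M \lm \Hom_\Y(\calt, H^0) \lm 0,
\end{equation*}
together with the vanishing $\Ext^1_\Y(\calt, H^0) = 0$ forced by the long exact sequence.

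Because $H^0$ is Serre stable and the vanishing of $\Ext^1_\Y(\calt,H^0)$ means $\Rhom_\Y(\calt,H^0) = \Hom_\Y(\calt,H^0)$ is concentrated in degree $0$, the class $[\Hom_\Y(\calt,H^0)]$ is Coxeter stable in $K_0(A)$, and therefore so is $[\Ext^1_\Y(\calt,H^{-1})] = \vec{d} - [\Hom_\Y(\calt,H^0)]$. The submodule $\Ext^1_\Y(\calt, H^{-1}) \hookrightarrow M$ then has Coxeter stable dimension vector sandwiched between $\vec{0}$ and $\vec{d}$, so the minimality of $\vec{d}$ forces it to be either $\vec{0}$ or $\vec{d}$. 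When it is $\vec{0}$, one gets $M \simeq \Hom_\Y(\calt, H^0) = \Rhom_\Y(\calt, H^0)$, and applying the inverse equivalence $-\otimes^{\L}_A \calt$ yields $F = H^0$, so $H^{-1} = 0$; since $[F] = \vec{d}$ is the class of a skyscraper sheaf and hence has rank $0$ in $K_0(\Y)$, the sheaf $H^0$ is pure torsion and so of finite length on the curve $\Y$. When the class is $\vec{d}$, one gets $\Hom_\Y(\calt, H^0) = 0$, hence $H^0 = 0$ and $F = H^{-1}[1]$; the identity $[F] = -[H^{-1}]$ in $K_0(\Y)$ then forces $\rk H^{-1} = 0$, so the locally free $H^{-1}$ vanishes and $M = 0$, contradicting $[M] = \vec{d} \neq \vec{0}$.

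The main obstacle is ruling out a non-zero Serre stable locally free $H^{-1}$: in the tubular case such bundles genuinely exist on $\Y$, so the support-based argument of Lemma~\ref{lem:heartfano} is no longer available, and the argument really must use the minimality of $\vec{d}$ through the Coxeter stable submodule $\Ext^1_\Y(\calt,H^{-1})$ of $M$. The heredity of $\coh \Y$ is what makes this replacement viable, both by producing the clean four-term sequence and by forcing the vanishing $\Ext^1_\Y(\calt, H^0) = 0$ used to identify $\Rhom_\Y(\calt,H^0)$ with $M$.
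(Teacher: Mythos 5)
Your proof is correct and follows essentially the same route as the paper: both arguments reduce to $\pd_A M=1$, use heredity of $\coh\Y$ to split $[M]=\vec{d}$ into the two non-negative Coxeter stable classes coming from $H^{-1}$ and $H^0$, invoke minimality of $\vec{d}$ to kill one of them, and then use the rank-zero class $[F]=[k(p)]$ in $K_0(\Y)$ to rule out the shifted locally free summand. The only cosmetic difference is that the paper uses the hereditary splitting $M\otimes^{\L}_A\calt\simeq F_1[1]\oplus F_0$ to write $M$ as a direct sum, whereas you extract the same two pieces as a short exact sequence from the truncation triangle.
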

\begin{proof}
In this case, $\pd M = 1$ and $\coh \Y$ is hereditary so $M \otimes^{\L}_A \calt \simeq F_1[1] \oplus F_0$ for Serre stable sheaves $F_1,F_0$. Hence $M$ is the direct sum of the Serre stable modules $\Rhom_{\Y}({\calt}, F_0), \Rhom_{\Y}({\calt}, F_1[1])$. Minimality of $\vec{d}$ then ensures that either $F_0= 0$ or $F_1 = 0$. Now rank considerations show that for $F_1[1] \oplus F_0$ to have the same class in $K_0(\Y)$ as a skyscraper sheaf, we must have $F_1 = 0$ and $F_0$ is a finite length sheaf. 
\end{proof}

\begin{prop}\label{prop:stillflat}
Suppose that $M \otimes^{\L}_A \calt$ is a finite length sheaf for any $k$-point $M$ of $\X^S$. 
Let $\calm$ be a flat family of Serre stable $A$-modules over a noetherian ring $R$ with dimension vector $\vec{d}$
(so there is an isomorphism $\calm \simeq \call \otimes_R \nu_d \calm$ for some line bundle $\call$
on $\spec R$). Then $\calm \otimes^\L_A \calt$ is a flat family of $\calo_{\Y}$-modules over
$R$.
\end{prop}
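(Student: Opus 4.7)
My plan is to represent $\calm \otimes^{\L}_A \calt$ by an explicit bounded complex of $R$-flat coherent sheaves on $\Y$, check fibrewise that the complex has cohomology concentrated in degree $0$, and then invoke cohomology and base change.

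Since $A$ has finite global dimension and is a finite-dimensional algebra over the perfect field $k$, it admits a finite projective $A$-bimodule resolution $P_\bullet \to A$ with each $P_q$ a finite direct sum of bimodules $Ae_v \otimes_k e_wA$. By formula~(\ref{eq:derivedtensor}),
\[ F^\bullet := \calm \otimes_A P_\bullet \otimes_A \calt \;\simeq\; \calm \otimes^{\L}_A \calt \]
is a bounded complex of $(\calo_\Y \otimes_k R)$-modules whose terms are finite direct sums of summands $\calm e_v \otimes_k \calt_w$. Each such summand is coherent over $\calo_\Y \otimes_k R$ and $R$-flat, because $\calm e_v$ is $R$-projective and $\calt_w$ is locally free on $\Y$.

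Next I check fibres. For each prime $\mathfrak p \subset R$, Lemma~\ref{lserrebchange} and the base-change compatibility built into the definition of $\X^S$ imply that $\calm \otimes_R \kappa(\mathfrak p)$ is a Serre stable module over $A \otimes_k \kappa(\mathfrak p)$ with dimension vector $\vec{d}$. The proofs of Lemmas~\ref{lem:heartfano} and~\ref{lem:heartwtdline} use only base-change-stable input (the Fano or anti-Fano hypothesis, finite global dimension together with Auslander--Buchsbaum, or in the curve case the Euler form and minimality of $\vec{d}$), so the hypothesis extends verbatim to any field extension of $k$. Therefore $(\calm \otimes_R \kappa(\mathfrak p)) \otimes^{\L}_{A \otimes \kappa(\mathfrak p)} (\calt \otimes \kappa(\mathfrak p))$ is a finite length sheaf concentrated in degree $0$. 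Because the $F^i$ are $R$-flat, $F^\bullet \otimes_R \kappa(\mathfrak p)$ computes this derived tensor, so it has cohomology concentrated in degree $0$ for every prime $\mathfrak p$.

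Finally I invoke cohomology and base change in its bounded, flat form: a bounded complex of coherent $(\calo_\Y \otimes R)$-modules whose terms are $R$-flat and whose fibres over every prime of $R$ have cohomology concentrated in degree $0$ is quasi-isomorphic to a coherent, $R$-flat $(\calo_\Y \otimes R)$-module in degree $0$. Concretely one runs the hypertor spectral sequence $\Tor^R_{-s}(h^t(F^\bullet), \kappa(\mathfrak p)) \Rightarrow h^{s+t}(F^\bullet \otimes_R \kappa(\mathfrak p))$: at the top-cohomology corner $E_2^{0,q_0}$ all differentials vanish for index reasons, forcing $h^{q_0}(F^\bullet) \otimes \kappa(\mathfrak p) = 0$ for every $\mathfrak p$ and then, by Nakayama on the stalks of $\calo_\Y \otimes R$, that $h^{q_0}(F^\bullet) = 0$ whenever $q_0 > 0$; truncating (the kernel of a surjection of flat modules is flat, so the truncation remains a bounded complex of $R$-flat sheaves) and iterating reduces the complex to amplitude $[0,0]$, after which the same spectral-sequence argument forces the vanishing of $\Tor^R_i(h^0(F^\bullet), \kappa(\mathfrak p))$ for $i > 0$ and hence $R$-flatness of $h^0(F^\bullet)$. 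The principal technical hurdle is the extension of the $k$-point hypothesis to arbitrary residue fields, which I handle by the base-change robustness of Lemmas~\ref{lem:heartfano} and~\ref{lem:heartwtdline}; the rest is a standard application of cohomology and base change.
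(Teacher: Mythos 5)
Your overall architecture (realise $\calm\otimes^{\L}_A\calt$ as a bounded complex of $R$-flat coherent sheaves via the bimodule resolution, check fibrewise concentration in degree $0$, then run the hypertor spectral sequence) is the same as the paper's, and your explicit attention to the fact that residue fields of $R$ need not equal $k$ addresses a point the paper glosses over. But the spectral-sequence step as you describe it would fail. The complex $F_\bullet=\calm\otimes_AP_\bullet\otimes_A\calt$ has its homology $h_j=\Tor^A_j(\calm,\calt)$ in degrees $0,\dots,n$; the piece you want to keep, $h_0=\calm\otimes_A\calt$, sits at the cokernel end, and the pieces you want to kill sit above it. In the spectral sequence $E^2_{i,j}=\Tor^R_i(\kappa,h_j(F_\bullet))\Rightarrow h_{i+j}(\kappa\otimes_RF_\bullet)$, the extreme corner you propose to start from, $E^2_{0,n}=\kappa\otimes_Rh_n$, receives differentials $d_r\colon E^r_{r,\,n-r+1}\to E^r_{0,n}$ whose sources are subquotients of $\Tor^R_r(\kappa,h_{n-r+1})$; these do \emph{not} vanish for index reasons, so $E^\infty_{0,n}=0$ only says $\kappa\otimes_Rh_n$ is covered by images of differentials, not that it vanishes. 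Likewise your truncation lemma (``the kernel of a surjection of flat modules is flat'') applies at the cokernel end, which is precisely the end you must not truncate; at the kernel end one would instead need that a map of finite flat modules which is injective on fibres is locally split injective --- a different statement.

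The induction must be anchored at the other corner. The entry $E^2_{1,0}=\Tor^R_1(\kappa,h_0)$ genuinely has all differentials vanishing for index reasons (incoming sources have $j<0$, outgoing targets have $i<0$), so it equals $E^\infty_{1,0}=0$. Combined with finite generation of $h_0$ over $R$ --- which itself needs an argument, e.g.\ the paper's observation that the support of $\calm\otimes_A\calt$ is quasi-finite and proper over $\spec R$, hence finite by Zariski's main theorem --- the local criterion for flatness gives that $h_0$ is $R$-flat, killing the whole row $j=0$ away from $(0,0)$. Only then does $E^2_{0,1}=\kappa\otimes_Rh_1$ become a corner with no nonzero differentials (its incoming $d_2$ now comes from $\Tor^R_2(\kappa,h_0)=0$), so Nakayama kills $h_1$, and one climbs upward row by row. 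This is the paper's argument; your top-down version should be replaced by it, or by the locally-split-injection truncation from the kernel end, which your right-exactness/Nakayama mechanism does not supply.
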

\begin{proof}
We may assume that $R$ is a local ring with residue field $\kappa$. 
We let $K_{\bullet} = \calm \otimes^{\L}_{A} \calt$ and use the hypertor spectral sequence for $\kappa \otimes^{\L}_R K_{\bullet}$. Note that since
$\calm$ is locally free over $R$ we have 
$\kappa\otimes_R^{\L}K_{\bullet}=\kappa\otimes_RK_{\bullet}$ and by assumption $h_i(\kappa\otimes_RK_{\bullet})=0$ if $i\neq 0$. The homology 
spectral sequence thus becomes \[E_{i,j}^2=\Tor^R_i(\kappa, h_{j}(K_\bullet))\Rightarrow h_{i+j}(\kappa \otimes_R K_\bullet)\] with the $E^{\infty}$ term on 
the right hand side vanishing when $i+j \neq 0$.  Consider the
support $Z \subset \Y_R$ of $\calm \otimes_A \calt$ in $\Y_R$. The composite $Z \hookrightarrow \Y_R
\xrightarrow{\pi} \spec R$ is both quasi-finite and projective so by Zariski's main theorem, the
morphism is finite. Hence $\calm \otimes_A \calt=h_0(K_\bullet)$ is a finitely generated $R$-module.
From the spectral sequence we see that $\Tor^R_1(\kappa,h_0(K_\bullet))=0$ and so from the local
criterion for flatness we deduce that $h_0(K_\bullet) = \calm\otimes_A\calt$ is flat over
$R$. Hence $\Tor_i^R(\kappa,h_0( K_\bullet))=0$ for all $i>0$. Now, the spectral sequence shows that
$\Tor_0^R(\kappa,h_1( K_\bullet))= \kappa \otimes_R \h_1(K_\bullet)=0$. However, just as in the
$h_0$ case $h_1(K_\bullet)$ is a finitely generated $R$-module and so $h_1(K_\bullet)=0$ and in particular
$E_{i,1}^2=0$ for all $i$. Continuing by induction we see that $h_i(K_\bullet)=0$ for all $i\ne 0$.

\end{proof}

\begin{thm}\label{thm:universal}
Suppose $\Y$ is either i) Fano or anti-Fano or, ii) that it is a weighted projective line and $\vec{d}$ is minimal Coxeter stable. Then the flat family of Serre stable $A$-modules $\calt^{\vee} \simeq \omega^{-1}_{\Y} \otimes_{\Y} \nu_d\calt^{\vee}$ defines an isomorphism $\Y \to \X^S$ of stacks. 
\end{thm}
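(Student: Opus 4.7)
The plan is to produce a quasi-inverse to $\Phi:\Y\to\X^S$ (the morphism classified by $\calt^{\vee}\in\X^S(\Y)$) by composing the derived-tensor-product functor $-\otimes^\L_A\calt$ with the identification $\W^S\simeq\Y$ of Theorem~\ref{thm:order}. First, $\Phi$ is well-defined: $e_v\calt^{\vee}=\calt_v^{\vee}$ has rank $d_v$, so $\calt^{\vee}$ has dimension vector $\vec{d}$, and the Serre stability isomorphism $\calt^{\vee}\simeq\omega_{\Y}^{-1}\otimes_{\Y}\nu_d\calt^{\vee}$ (taking $\call=\omega_{\Y}^{-1}$) is exactly the content of Proposition~\ref{prop:tiltstable}.

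Next, I would construct $\Psi':\X^S\to\W^S$ by $\calm\mapsto\calm\otimes^\L_A\calt$. Under the hypotheses of the theorem, Lemmas~\ref{lem:heartfano} and~\ref{lem:heartwtdline} show that on each $k$-fibre this derived tensor product is a single finite-length sheaf; Proposition~\ref{prop:stillflat} then upgrades this to the statement that $\calm\otimes_A\calt$ is a flat family of coherent sheaves on $\Y_T$ concentrated in degree $0$. To see that it lies in $\W^S(T)$ one needs support finite over $T$ (immediate from the fibrewise analysis) and the relative push-forward to be an appropriate line bundle, which reduces to the observation that Serre stable fibres carry the $K_0$-class of a generic skyscraper sheaf, with Serre stability forcing the shape $k_{\Y}(p)=\bigoplus_\mu S_\mu$ at stacky points. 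Serre stability on $\Y_T$ then transfers by tensoring $\calm\simeq\call\otimes_T\nu_d\calm$ with $\calt$ and applying Proposition~\ref{prop:tiltstable} to rewrite $\nu_d\calm\otimes_A\calt$ as $\omega_{\Y}\otimes_{\Y}(\calm\otimes_A\calt)$. Composing $\Psi'$ with $\W^S\xrightarrow{\sim}\Y$ produces $\Psi:\X^S\to\Y$.

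To verify $\Psi\circ\Phi\simeq\id_{\Y}$, I would trace through $T$-points: $f\in\Y(T)$ maps via $\Phi$ to $f^*\calt^{\vee}$ and then via $\Psi'$ to $(f\times\id_{\Y})^*(\calt^{\vee}\otimes_A\calt)$. The tilting-theoretic identity $\calt^{\vee}\otimes^\L_A\calt\simeq\calo_\Delta$ on $\Y\times\Y$ (both sides represent the identity Fourier--Mukai functor on $\D^b(\Y)$) reduces this to $\calo_{\Gamma_f}$, which under the Theorem~\ref{thm:order} isomorphism (whose universal family is precisely $\calo_\Delta$) corresponds back to $f$. The reverse composition $\Phi\circ\Psi\simeq\id_{\X^S}$ follows because $-\otimes^\L_A\calt$ is a derived equivalence in families, so $\calm\otimes_A\calt$ determines $\calm$ up to canonical isomorphism. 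I expect the main obstacle to be verifying that $\calm\otimes_A\calt$ genuinely lands in the rigidified stack $\W^S(T)$ (particularly the push-forward-is-a-line-bundle condition near stacky points of $\Y$) and carefully tracking equivalence classes of isomorphisms through the derived equivalence so that the rigidifications of $\X^S$ and $\W^S$ match; establishing the bimodule identity $\calt^{\vee}\otimes^\L_A\calt\simeq\calo_\Delta$ in families is the remaining key tilting-theoretic input.
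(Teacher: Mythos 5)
Your proposal is correct and follows essentially the same route as the paper: show via Proposition~\ref{prop:stillflat} and Proposition~\ref{prop:tiltstable} that $\calm\otimes_A\calt$ is a flat Serre stable family of skyscraper sheaves, invoke Theorem~\ref{thm:order} to obtain the classifying morphism $f$ with $\calm\otimes_A\calt\simeq f^*\calo_{\Delta}$, and then use the tilting property to recover $\calm\simeq f^*\calt^{\vee}$. The only cosmetic difference is that the paper verifies the universal property directly and recovers $\calm$ through the chain $\calm\simeq\RG(\calm\otimes_A\calt\otimes_{\Y}\calt^{\vee})\simeq\RG(f^*\calo_{\Delta}\otimes_{\Y}\calt^{\vee})\simeq f^*\calt^{\vee}$ using $\RG(\calt\otimes_{\Y}\calt^{\vee})\simeq A$ and Lemma~\ref{lem:tensorh0commute}, rather than establishing the full kernel identity $\calt^{\vee}\otimes^{\L}_A\calt\simeq\calo_{\Delta}$ that you flag as remaining input.
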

\noindent
\textbf{Remark} Together with Theorem~\ref{tBKR} and Proposition~\ref{p:regssforcc}, part (i) gives the characterisation of non-tubular concealed canonical algebras stated in Theorem~\ref{thm:3}. 
\begin{proof}
We must show that $\calt^{\vee} \simeq \omega^{-1}_{\Y} \otimes_{\Y} \nu_d \calt^{\vee}$ gives the
universal family.  We consider a flat family of $A$-modules $\calm$ over $R$ with dimension vector
$\vec{d}$ and an isomorphism $\theta\colon\calm \to \call \otimes_R \nu_d \calm$. From Proposition~\ref{prop:stillflat}, we know that $\calm \otimes^\L_A \calt = \calm \otimes_A \calt$ is a flat family of $\calo_{\Y}$-modules. Furthermore, 
$\theta$ and the natural isomorphism of Proposition~\ref{prop:tiltstable} give isomorphisms
\begin{equation}\calm \otimes_A \calt \simeq \call \otimes_R \calm \otimes^\L_A DA[-1]  \otimes^\L_A \calt \simeq \call \otimes_R \calm \otimes^\L_A \calt \otimes_{\Y} \omega_{\Y} 
\label{eq:stable}.
\end{equation}
In particular $\calm \otimes_A \calt$ is Serre stable and Theorem~\ref{thm:order} shows that there is a morphism $f:\spec R \lm \Y$ such that $\calm \otimes_A \calt \simeq f^* \calo_{\Delta}$ where $\Delta \subset \Y \times \Y$ is the diagonal. In fact the isomorphism above is obtained from pulling back $ \calo_{\Delta} \simeq \omega^{-1}_{\Y} \otimes_{\Y}\calo_{\Delta} \otimes_A \omega_{\Y}$ via $f$ where here, we view $\calo_{\Delta}$ as a $(\calo_{\Y},\calo_{\Y})$-bimodule and the pullback when viewed as a sheaf on $\Y \times \Y$ is via $f \times 1$. Since $\calt$ is a tilting bundle we have 
\begin{eqnarray*}
\calm & \simeq & \calm \otimes^{\L}_A \RG( \calt \otimes_{\Y} \calt^{\vee}) \\
 & \simeq & \RG(\calm \otimes_A \calt \otimes_{\Y} \calt^{\vee})\\
 & \simeq & \RG(f^*\calo_{\Delta} \otimes_{\Y} \calt^{\vee}) \\
 & \simeq & \RG(f^*\calt^{\vee}) \\
 & \simeq & f^* \calt^{\vee}
\end{eqnarray*} 
where the second isomorphism follows from lemma~\ref{lem:tensorh0commute}. This calculation can be used to show that applying $\RG(- \otimes_{\Y} \calt^{\vee})$ to the isomorphism in (\ref{eq:stable}) recovers $\theta$ and that $\calt^{\vee} \simeq \omega^{-1} \otimes_{\Y} \nu_1 \calt^{\vee}$ is indeed universal. 
\end{proof}

Recall from Proposition~\ref{poneismin} that the theorem applies in the case where $\Y$ is a weighted projective line and $\calt$ is the canonical tilting bundle whose endomorphism ring is the canonical algebra. It also applies to the $n$-canonical algebras (see \cite[Section~6]{HIMO}) for a GL-projective space weighted on a hyperplane as defined in Example~\ref{eg:GLspace}. We suspect that the hypotheses of the theorem can be weakened significantly since i) and ii) above are quite independent. In particular, we hope the theorem holds true for all concealed canonical algebras. 

\section{Appendix: Classical approach to weighted projective lines} \label{sec:append}

In this appendix, we clarify the relationship between Geigle-Lenzing description of weighted projective lines in \cite{GL} and ours. The material is implicit in \cite{GL}. 

Let $G$ be a commutative reductive algebraic group, by which we
simply mean one isomorphic to $\mathbb{G}_m^r \times A$ for some
finite abelian group $A$. Let $\Gamma = G^{\vee}$ be the dual (or
character) group, which is a finitely generated abelian group of
rank $r$. We first recall that to give a rational action of $G$ on
an affine scheme $\spec R$ amounts to imposing a $\Gamma$-grading
on $R$. Given $\gamma \in \Gamma$, $G$ acts on the $\gamma$-graded component $R_{\gamma}$ via
the character $\gamma$. Suppose we are indeed given such a grading
$R = \oplus_{\gamma} R_{\gamma}$. In this language, a
$G$-equivariant quasi-coherent sheaf on $\spec R$ corresponds to a
graded $R$-module.  More precisely, we have a category equivalence
$\qcoh [\spec R /G] \simeq \Gr R$. 

We now restrict to the case where $\Gamma$ has rank 1, though the
ideas here apply in general. In this case, there are precisely two
surjective maps $\nu: \Gamma \lm \Z$. We will assume that the
$\Gamma$-graded $k$-algebra $R$ is {\em connected} in the sense
that $R_0 = k$ and we can choose $\nu$ so that for any $\gamma \in
\Gamma$ with $R_{\gamma} \neq 0$, we have $\nu (\gamma) >0$. This
ensures that $\mmm := \oplus_{\gamma \neq 0} R_{\gamma}$ is a
graded ideal in $R$ and we let $\text{pt} \in \spec R$ denote that
corresponding closed point. Since $\text{pt}$
is fixed by $G$, $G$ acts on the open set $U = \spec R -
\text{pt}$, and we may consider the stacky projective scheme
$\text{StProj}\ R = [U/G]$. By \cite[Example~7.21]{V}, we know that there is a category equivalence $\qcoh \text{StProj}\ R \simeq
(\Gr R)/\text{tors}$ where $\text{tors}$ is the Serre subcategory of
$\mmm$-torsion modules.   Note that the category of quasi-coherent sheaves on a weighted projective line as described in \cite{GL}  has the form $(\Gr R)/\text{tors}$ for an appropriate choice of 2-dimensional graded ring $R$. Hence we may identify the weighted projective lines of Geigle-Lenzing with the associated
stack $\text{StProj}\ R$.

We now analyse the stack $\mathbb{X} = \text{StProj}\ R$ in analogy with the standard construction of projective schemes by patching affine
open sets. This will connect the Geigle-Lenzing approach with the one given in Subsection~\ref{subsec:wtdlines}. For each non-zero homogeneous element $t \in
R_{\gamma}, \gamma \neq 0$, the set $U_t = \spec R[t^{-1}]$ is a
$G$-invariant open subset of $U$ and hence $[U_t/G]$ is an open
substack of $\mathbb{X}$. These cover $\mathbb{X}$ and $[U_t/G]$
has a coarse moduli scheme $\spec (R[t^{-1}])^G = \spec
R[t^{-1}]_0$. Hence $\mathbb{X}$ has a coarse moduli scheme which
is the usual scheme-theoretic $\proj R$. 

The open substack $[U_t/G]$ has a simpler description which makes
it obvious that it is a Deligne-Mumford stack and what are the
inertia groups. We change the presentation for the stack by
replacing $U_t$ with the closed subscheme $\bar{U}_t = \spec
R[t^{-1}]/(t-1) = \spec R/(t-1)$.  First consider the exact
sequence defining $\bar{\Gamma}$ below $$ 0 \lm \Z \gamma \lm
\Gamma \lm \bar{\Gamma} \lm 0 $$ and the corresponding dual exact
sequence $$ 1 \lm G' \lm G \xrightarrow{\gamma} k^{\times} \lm 1$$
where $G' \leq G$ is the subgroup isomorphic to
$\bar{\Gamma}^{\vee}$ and we have identified $(\Z\gamma)^{\vee}$
with $k^{\times}$. This sequence shows that $G'$ fixes $t-1$ so
acts on $\bar{U}_t$. We leave the reader to verify that $[U_t/G]
\simeq [\bar{U}_t/G']$, contenting ourselves with heuristics
(which form a proof when $U_t$ is a variety) and a proof that they have equivalent categories of quasi-coherent sheaves (Proposition~\ref{pstackred} below). Note that $G'$ is
finite so $\mathbb{X}$ is indeed Deligne-Mumford. We first show
that $\bar{U}_t$ meets every $G$-orbit. Indeed, if $x \in U_t$,
then $t(x) = \beta \in k$ is non-zero and there is some $g \in G$
such that $t(g.x) = 1$, just pick $g$ so that $\gamma (g) =
\beta^{-1}$. Let now $x,y \in \bar{U}_t$ lie in the same $G$-orbit
so say $y = g.x$. We need to show that $g\in G'$. If this is not
the case then $\gamma(g) \neq 1$ so $t(y) = \gamma(g) t(x) \neq 1$
so $y \notin \bar{U}_t$, a contradiction. This completes the
heuristics. 

Finally, we elucidate the induced category equivalence $\qcoh
[U_t/G] \simeq \qcoh [\bar{U}_t/G']$ which amounts to a category
equivalence $\Gr R[t^{-1}] \simeq \Gr R/(t-1)$ where $R[t^{-1}]$
is $\Gamma$-graded in the obvious way and $R/(t-1)$ is
$\bar{\Gamma}$-graded as in \cite[p.104]{Sm}. We now generalise a theorem of Smith-Zhang which can be found in \cite[Proposition~2.4]{Sm}. Note that there is a surjective ring homomorphism $\phi:R[t^{-1}] \lm R/(t-1)$. 

\begin{prop} \label{pstackred} 
The  natural morphism  $\iota:[\bar{U}_t/G'] \lm [U_t/G]$ induces the category equivalence $\Gr R[t^{-1}] \simeq
	\Gr R/(t-1)$ given by $\iota^* = R/(t-1) \otimes_{R[t^{-1}]}(-)$.
\end{prop}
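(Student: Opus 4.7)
The plan is to construct an explicit inverse $\Psi\colon \Gr R/(t-1)\to \Gr R[t^{-1}]$ to the candidate functor $\iota^* = R/(t-1)\otimes_{R[t^{-1}]}(-)$ and verify they are quasi-inverse. First I would check that $\iota^*$ really does land in $\Gr R/(t-1)$: since $t$ has degree $\gamma$, both $t$ and $1$ are homogeneous of degree $\bar 0$ in the induced $\bar\Gamma$-grading on $R[t^{-1}]$, so $(t-1)M$ is a $\bar\Gamma$-graded submodule of $M$, and $M/(t-1)M$ inherits a $\bar\Gamma$-grading with $(M/(t-1)M)_{\bar\alpha} = M_{\bar\alpha}/(t-1)M_{\bar\alpha}$, where $M_{\bar\alpha} := \bigoplus_{n\in\Z} M_{\alpha+n\gamma}$. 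I would then define $\Psi$ by unfolding: for $N\in\Gr R/(t-1)$, set $\Psi(N)_\alpha := N_{\bar\alpha}$ and let a homogeneous $r\in R[t^{-1}]_\beta$ act on $n\in \Psi(N)_\alpha=N_{\bar\alpha}$ by $r\cdot n := \bar r \cdot n\in N_{\overline{\alpha+\beta}}=\Psi(N)_{\alpha+\beta}$, using the quotient $R[t^{-1}]\twoheadrightarrow R/(t-1)$.

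The crux is a natural isomorphism $\phi_\alpha\colon M_\alpha \xrightarrow{\sim} (M/(t-1)M)_{\bar\alpha}$ for every $M\in\Gr R[t^{-1}]$ and every $\alpha\in\Gamma$. Surjectivity is straightforward: for $x\in M_{\alpha+n\gamma}$ one has $x - t^{-n}x = -t^{-n}(t^n-1)x \in (t-1)M$ since $t^n-1 = (t-1)(t^{n-1}+\cdots+1)$ (with the obvious analogue for $n<0$), so $x\equiv t^{-n}x\pmod{(t-1)M}$ with $t^{-n}x\in M_\alpha$. The harder part is injectivity: suppose $m\in M_\alpha$ and $m=(t-1)m'$ with $m'=\sum_\beta m'_\beta$ of finite support. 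Matching degree-$\delta$ components of $(t-1)m'$ against $m$ yields $m'_\delta = tm'_{\delta-\gamma}$ for every $\delta\neq\alpha$, together with $tm'_{\alpha-\gamma}-m'_\alpha = m$. Fix a coset $\beta_0+\Z\gamma$ of $\Gamma/\Z\gamma$ and write $a_n := m'_{\beta_0+n\gamma}$. If the coset does not contain $\alpha$ then $a_n = t a_{n-1}$ holds for all $n$, so $a_n = t^n a_0$ and finite support forces $a_0=0$, hence all $a_n=0$. If the coset contains $\alpha$, the same recursion holds for $n\neq 0$; going upward gives $a_n = t^n a_0$ for $n\geq 1$, and going downward gives $a_{-k}=t^{-(k-1)}a_{-1}$ for $k\geq 1$, so finite support together with invertibility of $t$ forces $a_0=0$ and $a_{-1}=0$, and hence all $a_n=0$. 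Crucially, infinite order of $\gamma$, guaranteed by $\nu(\gamma)>0$, makes each coset infinite so that the finite-support argument applies. Thus $m'=0$ and $m=0$.

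Finally I would assemble the $\phi_\alpha$ into a natural isomorphism $M\simeq \Psi(\iota^*M)=\Psi(M/(t-1)M)$. The other composition $\iota^*\Psi(N)\simeq N$ is a direct computation: on $\Psi(N)_{\bar\alpha}=\bigoplus_{k\in\Z} N_{\bar\alpha}$ the action of $t-1$ sends $n^{(k)}\mapsto n^{(k+1)}-n^{(k)}$, whose cokernel is identified with $N_{\bar\alpha}$ via $(n_k)_k\mapsto \sum_k n_k$ (surjective trivially; injective because $\sum n_k=0$ lets one solve $n_k = n_{k-1}-a_k$ with finite support). These natural isomorphisms are visibly compatible with morphisms in each category, so $\iota^*$ and $\Psi$ are quasi-inverse equivalences. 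The main obstacle in this argument is the injectivity step in the key lemma: the coset-by-coset finite-support analysis is where all the combinatorial content lies, and it is precisely the hypothesis $\nu(\gamma)>0$ that makes it go through.
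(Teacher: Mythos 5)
Your proposal is correct and follows essentially the same route as the paper: both construct the inverse equivalence by the same ``unwrapping'' functor $\Psi(N)_\alpha = N_{\bar\alpha}$ with the $R[t^{-1}]$-action induced through the quotient $R[t^{-1}]\twoheadrightarrow R/(t-1)$ (the paper's $M[s]$). The difference is only that you supply the verification the paper dismisses as ``clearly'' an inverse --- in particular the coset-by-coset finite-support argument proving injectivity of $M_\alpha \to (M/(t-1)M)_{\bar\alpha}$, which is indeed where the hypothesis that $\gamma$ has infinite order (i.e.\ $\nu(\gamma)>0$) enters; apart from a harmless sign slip in $x - t^{-n}x = t^{-n}(t^n-1)x$, the details check out.
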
 
\begin{proof} The induced functor $\iota^*$ is given by $R/(t-1) \otimes_{R[t^{-1}]}(-)$  since it comes from the closed imbedding $\bar{U}_t \lm U_t$. 
We wish to define the inverse equivalence $\Phi$ so consider $M \in \Gr R/(t-1)$. We need to convert the additive notation of $\Gamma$ to multiplicative, so introduce the ``placeholder'' notation $s^{\delta}, \delta \in \Gamma$ with the understanding that $s^{\delta}s^{\eps} = s^{\delta + \eps}, \delta,\eps \in \Gamma$. We
	define the ``unwrap'' module $M[s] \in \Gr R[t^{-1}]$  by 
$$ M[s]_{\delta} = M_{\bar{\delta}} s^{\delta}  $$
where $\bar{\delta}$ denote the image of $\delta$ in $\bar{\Gamma}$. Given homogeneous elements $a \in
	R[t^{-1}]_{\eps}, ms^{\delta} \in M[s]_{\delta}$, we define
	$ams^{\delta} = \phi(a) m s^{\eps + \delta}$. This construction is clearly functorial and gives the sought for inverse equivalence $\Phi$ to $\iota^*$. 
\end{proof}

\textbf{Example} Let $\Gamma = (\Z \gamma_0 + \Z \gamma_1 + \Z \gamma_2)/(p_0\gamma_0 = p_1\gamma_1 = p_2\gamma_2) $ and $R = k[x_0,x_1,x_2]/(x_2^{p_2} + \lambda x_0^{p_0} -  x_1^{p_1})$ be the $\Gamma$-graded algebra with $\deg x_i = \gamma_i$. According to \cite{GL}, this gives the weighted projective line, weighted at at most 3 points $0, \infty, \lambda$ with weights $p_0, p_1, p_2$. Now $\mathbb{X} = [\left( \spec R - \text{pt}\right)/G]$ is covered by the open patches $x_0 \neq 0, x_1 \neq 0$. Write $t_0 = x_0^{p_0}, t_1 = x_1^{p_1}$. Note that  we have $R[x_0^{-1}]_0 = k[t_1/t_0]$ and $R[x_1^{-1}]_0 = k[t_0/t_1]$ so patching the affine lines together gives $\mathbb{P}^1$ as the coarse moduli space. Let us examine the patch $x_0 \neq 0$ more closely. $R/(x_0 -1) \simeq k[t_1,x_1,x_2]/(x_1^{p_1} - t_1, x_2^{p_2} - (t_1 - \lambda))$ which is the $\mu_{p_1} \times \mu_{p_2}$-cover of $\mathbb{A}_{t_1}$ ramified at $t_1=0,t_1=\lambda$ with ramification indices $p_1,p_2$ respectively. Of course, $\mu_{p_1} \times \mu_{p_2} = (\bar{\Gamma})^{\vee} =  (\Gamma/\Z \gamma_0)^{\vee}$ so the open substack we get here is $\left[ \left(\spec \frac{R}{(x_0-1)}\right) / \mu_{p_1} \times \mu_{p_2}\right]$. This is just the orbifold stack on $\mathbb{A}^1_{t_1}$ with stacky points at $t_1 = 0,\lambda$ with inertia groups there of $\mu_{p_1},\mu_{p_2}$ respectively.

\vspace{5mm}


\end{document}